\documentclass[french]{article} 
\usepackage[utf8]{inputenc}
\usepackage[T1]{fontenc}
\usepackage{lmodern}
\usepackage[a4paper]{geometry}
\usepackage[english]{babel}

\usepackage{fullpage}

\usepackage{amssymb,amsmath,mathtools,amsthm,dsfont,stmaryrd, bm}
\usepackage[all]{xy}
\usepackage{todonotes}

\newtheorem{theorem}{Theorem}
\newtheorem{prop}[theorem]{Proposition}
\newtheorem{corollary}[theorem]{Corollary}
\newtheorem{lemma}[theorem]{Lemma}

\newtheorem{remark}[theorem]{Remark}
\newtheorem{defi}[theorem]{Definition}

\newcommand{\E}{\mathbb{E}}

\newcommand{\N}{\mathbb{N}}

\renewcommand{\P}{\mathbb{P}}

\newcommand{\R}{\mathbb{R}}

\newcommand{\Z}{\mathbb{Z}}

\newcommand{\cV}{\mathcal{V}}

\renewcommand{\1}{\mathds{1}}

\renewcommand{\epsilon}{\varepsilon}
\renewcommand{\phi}{\varphi}

\newcounter{numeroexo}

\newcommand{\card}{\mbox{card}}

\begin{document}

\title{\LARGE First-order behavior of the time constant in non-isotropic continuous first-passage percolation}
\author{Anne-Laure Basdevant\footnote{LPSM - UMR CNRS 8001, Sorbonne Universit\'e, 4 place Jussieu, 75005 Paris, France, {\it anne.laure.basdevant@normalesup.org}}, Jean-Baptiste Gou\'er\'e\footnote{Institut Denis-Poisson - UMR CNRS 7013, Universit\'e de Tours, Parc de Grandmont, 37200 Tours, France, {\it jean-baptiste.gouere@lmpt.univ-tours.fr}} ~and Marie Th\'eret\footnote{Modal'X - UMR CNRS 9023, UPL, Universit\'e Paris Nanterre, 92000 Nanterre, France, {\it marie.theret@parisnanterre.fr}}}
\date{}

\selectlanguage{english}

\maketitle

\begin{abstract}
Consider $\Xi$ a homogeneous Poisson point process on $\mathbb{R}^d$ ($d\geq 2$) with unit intensity  with respect to the Lebesgue measure. For $\varepsilon\ge 0$,  we define the Boolean model $\Sigma_{p, \varepsilon}$ as the union of the balls of volume $\varepsilon$ for the $p$-norm ($p\in [1,\infty]$) and centered at the points of $\Xi$. We define a random pseudo-metric on $\mathbb{R}^d$ by associating with any path a travel time equal to its $p$-length outside $\Sigma_{p,\varepsilon}$. This defines a continuous model of first-passage percolation, that has been studied in \cite{GT17,GT22} for $p=2$, the Euclidean norm. For $p=1$, this model is expected to share common properties with the classical first-passage percolation on the graph $\mathbb{Z}^d$ with a distribution of passage times of the form $\varepsilon \delta_0 + (1-\varepsilon) \delta_1$. The exact calculation of the time constant of this model $\tilde \mu_{p,\varepsilon} (x)$ is out of reach. We investigate here the behavior of $\varepsilon \mapsto \tilde \mu_{p,\varepsilon} (x)$ near $0$, and enlight how the speed at which $\| x \|_p -  \tilde \mu_{p,\varepsilon} (x) $ goes to $0$ depends on $x$ and $p$. For instance, for $p\in (1,\infty)$, we prove that $\| x \|_p  - \tilde \mu_{p,\epsilon} (x)$ is of order $\varepsilon ^{\kappa_p(x)}$ with 
$$\kappa_p(x): = \frac{1}{d- \frac{d_1(x)-1}{2} - \frac{d-d_1 (x)}{p}}\,,$$
where $d_1(x)$ is the number of non null coordinates of $x$. The exact order of $\| x \|_p  - \tilde \mu_{p,\epsilon} (x)$ is also given for $p=1$ and $p=\infty$. Related results are also discussed, about properties of the geodesics, and analog properties on closely related models.
\end{abstract}

\section{Introduction}

\subsection{Main results}

\paragraph{Boolean model.} Consider a fixed dimension $d\geq 2$, and equip $\mathbb{R}^d$ with the $p$-norm $N = \| \cdot\|_p $ with some $p\in [1,+\infty]$, defined as usual: if $z=(z_1,\cdots , z_d) \in \mathbb{R}^d$, then
\begin{align*}
\textrm{for }p\in [1,\infty) \,, \qquad \|z\|_p = \left( \sum_{i=1}^d |z_i|^p \right)^{1/p} \,;
\textrm{ for }p=\infty\,, \qquad \|z\|_\infty = \max_{1 \leq i \leq d} |z_i |\,.
\end{align*}
We denote by $B_N (c,r)$ the closed ball for the norm $N$ centered at $c \in \mathbb{R}^d$ and of radius $r\in (0,\infty)$. 
Let $\Xi$ be a Poisson point process on $\mathbb{R}^d $ with intensity $\lambda |\cdot |$, where $\lambda \in (0,+\infty)$ and $|\cdot |$ designates the Lebesgue measure on $\mathbb{R}^d$. For any $r\geq 0$, we construct the Boolean model $\Sigma_{N,\lambda, r}$ by adding a ball of radius $r \in [0,+\infty)$ around each center $c\in \Xi$. More formally, the Boolean model $\Sigma_{N,\lambda, r}$ is defined by
\begin{equation}
\label{e:defSigma}
 \Sigma_{N,\lambda, r} = \bigcup_{c \in \Xi} B_N (c,r) \,.
 \end{equation}
Here we consider only the case where $r$ is a constant, but the model can be generalized by considering random radii. We refer to the book by Meester and Roy \cite{MeesterRoy} for background on the Boolean model in the Euclidean case, and to books by Last and Penrose \cite{last_penrose_2017} and by Schneider and Weil \cite{SchneiderWeil} for background on Poisson processes.

\paragraph{Continuous first-passage percolation.} A continuous model of first-passage percolation can be defined from the Boolean model by assuming that propagation occurs at speed $1$ outside $\Sigma_{N,\lambda, r} $, and at infinite speed inside $\Sigma_{N,\lambda, r} $ \emph{i.e.} the travel time $\tilde T_{N,\lambda ,r} (\pi)$ of a path $\pi$ is the $N$-length of $\pi$ outside $\Sigma_{N,\lambda , r}$. Optimizing on polygonal paths between two points $x,y \in \mathbb{R}^d$ leads to the definition of the travel time $ \tilde{T}_{N,\lambda, r} (x,y)$ between $x$ and $y$ as the minimal time needed to see propagation from $x$ to $y$:
\[
\tilde{T}_{N,\lambda, r} (x,y) = \inf_{\pi} \tilde{T}_{N,\lambda, r} (\pi)
\]
where the infimum runs over all polygonal paths from $x$ to $y$. More formal definitions are given in Section \ref{s:def}. In the case where $p=2$, {\it i.e.}, the underlying norm is the Euclidean norm, this model was studied by the second and third authors in \cite{GT17,GT22}, and previously introduced by Marchand and the second author in \cite{GM08} as a tool to study another growth model considered by Deijfen \cite{Deijfen}. By standard subadditive argument, it is well known that for every $z\in \mathbb{R}^d$, there exists a constant $ \tilde \mu_{N,\lambda, r} (z) \in [0,1]$ such that
\begin{equation}
\label{e:deftildemu}
\lim_{s \rightarrow \infty} \frac{ \tilde{T}_{N,\lambda, r} (0,sz)}{s} = \tilde \mu_{N,\lambda, r} (z) \quad \textrm{ a.s. and in }L^1.
\end{equation}
This constant $ \tilde \mu_{N,\lambda, r} (z)$ is called the {\it time constant} in the direction $z$.

\paragraph{Properties of $(\lambda , r) \mapsto   \tilde\mu_{N,\lambda,r} (z)$.} Fix $z\in \mathbb{R}^d\setminus \{0\}$ and $N=\| \cdot \|_2$ ({\it i.e.} $p=2$). By isotropy, we have $  \tilde  \mu_{\| \cdot \|_2,\lambda, r} (z) =  \| z\|_2  \tilde  \mu_{\| \cdot \|_2,\lambda, r} $ where $\tilde\mu_{\| \cdot \|_2,\lambda, r}$ is a constant (depending on $\lambda$ and $r$ but not $z$). We know that $  \tilde \mu_{\| \cdot \|_2,\lambda, 0} =1$, that $ \tilde \mu_{ \| \cdot \|_2,\lambda, r} >0$ if and only if the corresponding Boolean model $\Sigma_{\| \cdot \|_2,\lambda,r} $ is in some strongly subcritical percolation regime (see \cite{GT17} for more details), and that $(\lambda , r) \mapsto   \tilde \mu_{ \| \cdot \|_2,\lambda, r} $ is continuous (see \cite{GT22} for more details). However, there is no way to calculate explicitly the value of  $ \tilde\mu_{ \| \cdot \|_2,\lambda, r} $ when it is not null. For $N = \| \cdot \|_p$ for any $p\in [1,+\infty]$, the deterministic case $r =0$ is still trivial: $ \tilde\mu_{\| \cdot \|_p,\lambda,0} (z)= \| z\|_p$, and it is expected that positivity and continuity of the time constant exhibit the same behaviour as in the Euclidean case, but it is even more difficult to compute $ \tilde\mu_{\| \cdot \|_p,\lambda, r} (z)$. It thus makes sense to consider specific cases. In this work, we focus on the following case
 $$\lambda = 1 \qquad \mbox{ and } \qquad  r=\frac{\varepsilon^{1/d}}{2} \qquad \mbox{ for small $\varepsilon$.} $$
 This means that the number of balls per unit area is on average equal to 1, but each of them  has a small diameter equal to $\varepsilon^{1/d}$ and so a volume proportional to $\varepsilon$.
  We write for short 
  \begin{equation*}
   \tilde T_{p,\varepsilon} (x,y) :=  \tilde T_{\| \cdot \|_p,1,  \varepsilon^{1/d}/2} (x,y) \qquad \mbox{ and } \qquad
   \tilde \mu_{p,\varepsilon} (z) :=  \tilde \mu_{\| \cdot \|_p,1,  \varepsilon^{1/d}/2} (z).
\end{equation*}
Our primary objective is to calculate the speed at which $\| z\|_p - \tilde \mu_{p,\varepsilon} (z)$ goes to $0$ when $\varepsilon$ goes to $0$.

\paragraph{Main results.} Let $N=\| \cdot \|_p$ with $p\in [1,+\infty]$. Let $u = (u_1,\dots,u_d) \in \mathbb{R}^d$. We define
\begin{align}
\label{e:ajout4}
d_1 (u) &= \textrm{card} ( \{ i\in \{1,\dots , d\} \,:\, u_i \neq 0 \} )\nonumber \\
d_2 (u) &= d-d_1 (u) \nonumber\\
d_3 (u) &=  \textrm{card} ( \{ i\in \{1,\dots , d\} \,:\, |u_i| = \| u\|_\infty \} ) \\
d_4 (u) &= d-d_3 (u)\nonumber
\end{align}
where $\card (A)$ designates the cardinality of the finite set $A$.
For a given $p\in [1,\infty]$, for $u  \in \mathbb{R}^d$ such that $\|u\|_p = 1$, we define
\begin{equation}
\label{e:ajout5}
\kappa_p (u) = \left\{ \begin{array}{ll} \dfrac{1}{d_1 (u)} & \textrm{ if }  p =1\,, \\ \dfrac{1}{d - \frac{d_1 (u)-1}{2} - \frac{d_2(u)}{p}} & \textrm{ if }  p \in(1,\infty)\,, \\ \dfrac{1}{d_4 (u) +1} & \textrm{ if }  p =\infty \,. \end{array}\right.
\end{equation}
Notice that for any $p\in [1,\infty]$, and $u  \in \mathbb{R}^d$ such that $\|u\|_p = 1$, $d_1(u)\geq 1$ and $d - \frac{d_1 (u)-1}{2} - \frac{d_2(u)}{p} \geq d - (d_1(u)-1) - d_2(u) = 1$, thus $\kappa_p(u) \leq 1$. Our main result is the following.
\begin{theorem}
\label{theo:tilde_mu_p} For all $p\in [1,\infty]$, for all $u\in \R^d$ such that $ \| u \|_p=1$,
there exist constants $C_i = C_i (p,d,u)>0$, $i=1,2$ such that for $\varepsilon$ small enough, we have 
$$C_1\varepsilon^{\kappa_p(u)} \le 1-\tilde \mu_{p,\varepsilon}(u) \le C_2\varepsilon^{\kappa_p(u)}.$$
\end{theorem}
We call {\it geodesic} from $x$ to $y$ any polygonal path $\tilde \gamma_{p,\varepsilon} (x,y)$ from $x$ to $y$ such that $ \tilde T_{p,\varepsilon}  (x,y) = \tilde T_{p,\varepsilon} ( \tilde \gamma_{p,\varepsilon} (x,y))$ (see Proposition \ref{prop:geodes} for the existence of a geodesic). In the course of the proof of Theorem \ref{theo:tilde_mu_p}, we also obtain an upper bound on the minimal length of a geodesic. More precisely, we prove that for all $u \in \mathbb{R}^d$ such that $\| u\|_p=1$, for $\varepsilon$ small enough, a.s. for large $s$, there exists a geodesic $\tilde \gamma_{p,\varepsilon} (x,y)$ from $x$ to $y$ such that
\begin{equation}
\label{e:longueur}
  \| \tilde \gamma_{p,\varepsilon}(su) \|_p -s \le C_3\varepsilon^{\kappa_p(u)} s
  \end{equation}
(see Theorem \ref{theo:FPP} $(ii)$).

Moreover, at least in some specific directions or for some specific values of $p$, we can prove the existence of the rescaled limit of $1-\tilde \mu_{N,\varepsilon}(u)$ when $\varepsilon$ goes to $0$.
\begin{theorem}
\label{theo:monotonie}
Suppose we are in one of the following cases:
\begin{itemize}
\item $p\in [1,\infty]$ and $u = (1,0,\dots ,0)$ ;
\item $p\in \{1,2\}$, whatever $u\in \mathbb{R}^d$ such that $\|u\|_p =1$ ;
\item $p=\infty$ and $u\in \mathbb{R}^d$ such that $\|u\|_\infty=1$ and $d_3 (u) \in \{1,2\}$.
\end{itemize}
Then there exists a constant $K_p (u) \in (0,\infty)$ such that
$$   \lim_{\varepsilon \rightarrow 0} \frac{1- \tilde\mu_{p , \varepsilon}(u) }{\varepsilon^{\kappa_p (u)}} = K_p (u) \,.$$
\end{theorem}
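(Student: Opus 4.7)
The strategy is to identify $K_p(u)$ as the value of a variational problem on a unit-intensity Poisson point process, obtained via an anisotropic rescaling. The two-sided bound in Theorem \ref{theo:mu_p} (i) already guarantees that every subsequential limit of $(1-\mu_{p,\varepsilon}(u))/\varepsilon^{\kappa_p(u)}$ lies in a bounded positive interval, so the task reduces to showing that all such limits coincide.

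First, by Theorem \ref{theo:tilde_mu_p} (iii), $\mu_{p,\varepsilon}(u) - \tilde\mu_{p,\varepsilon}(u) = o(\varepsilon^{\kappa_p(u)})$, so the limits for $\mu$ and $\tilde\mu$ coincide whenever one of them exists; I focus on $\mu_{p,\varepsilon}$. In the case $p=2$, isotropy combined with Theorem \ref{t:existence} (ii) implies that $\mu_{2,\varepsilon}$ is a rotationally invariant norm on $\R^d$, hence a multiple of $\|\cdot\|_2$, which reduces this case to $u=e_1$.

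Second, I rescale space anisotropically: by a factor $L_\varepsilon \sim \varepsilon^{-a}$ in the direction $u$ and by $\ell_\varepsilon \sim \varepsilon^{b}$ in the perpendicular directions, with $a,b>0$ chosen so that (i) $L_\varepsilon \ell_\varepsilon^{d-1}=1$, preserving the Poisson intensity, and (ii) the leading-order Taylor expansion of $\|u+v\|_p-1$ around $v=0$ (after projection onto the supporting hyperplane at $u$) produces a detour cost balanced with the reward $\varepsilon^{1/d}$. Direct computation shows that these constraints force $a + 1/d = \kappa_p(u)$ and determine $b$. In the rescaled coordinates, the quantity of interest becomes, up to the Taylor-expansion error, the large-$L$ limit of
$$
\frac{M(L)}{L},\qquad M(L) := \sup_{\tilde\pi \in \Pi(0,L\tilde u)}\Bigl[\sharp\tilde\pi - I(\tilde\pi)\Bigr],
$$
where $\tilde u$ is the image of $u$, the supremum runs over polygonal paths with internal vertices in a Poisson point process $\widetilde\Xi$ of intensity $1$, and $I$ is the limit detour-cost functional determined by the local geometry of $B_{\|\cdot\|_p}(1)$ at $u$. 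By concatenation of optimal sub-paths, $M$ is superadditive in $L$, so Kingman's subadditive ergodic theorem yields the a.s.\ and $L^1$ convergence of $M(L)/L$ to a deterministic $K_p(u)\in(0,\infty)$, positivity and finiteness following from the two-sided bound in Theorem \ref{theo:mu_p} (i).

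The main obstacle is the exchange of the two limits $\varepsilon\to 0$ (for the rescaling) and $s\to\infty$ (for the subadditive argument). This is precisely where the geometric hypotheses of the theorem enter: in each of the three cases listed, the local geometry of the $\ell_p$ unit ball at $u$---axial symmetry in the perpendicular directions for $u=e_1$, full isotropy for $p=2$, and flat facial or low-dimensional edge structure for $p=1$ or for $p=\infty$ with $d_3(u)\in\{1,2\}$---makes the Taylor-expansion error from replacing $\|\cdot\|_p$ by its leading-order approximation uniformly negligible on the relevant anisotropic scales. This uniform control allows the interchange of the two limits and the identification of the common value $K_p(u)$. In directions not covered by the theorem, the detour cost mixes several scales with different exponents in different coordinate sub-spaces, which obstructs the uniform control needed for the interchange.
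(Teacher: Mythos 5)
Your anisotropic rescaling $\psi$ is the right object (and, up to the derivation of the exponents, matches the change of variable in the paper), but the overall strategy you attach to it diverges from the paper and contains a genuine gap that you yourself flag without resolving.

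The gap is precisely your ``main obstacle'': you propose to pass first to a $\varepsilon\to 0$ limit model (replacing the exact detour cost $\|u+v\|_p-1$ by its leading Taylor approximation $I$) and then apply a Kingman-type argument to the rescaled functional $M(L)/L$. But you never prove the interchange of the limits $\varepsilon\to 0$ and $s\to\infty$; you only assert that, under the geometric hypotheses listed, ``the Taylor-expansion error is uniformly negligible on the relevant anisotropic scales.'' This is exactly the hard part, and it is not routine: the rescaled optimal path can have $\Theta(L)$ segments, each contributing an error term, and the lengths $\|t_{y_i}\|$ of the perpendicular displacements in the rescaled process are not uniformly bounded, so summing second-order Taylor remainders over a path of length $O(L)$ is not obviously $o(L)$ uniformly in $\varepsilon$. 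Moreover, even granting the interchange, the ``limit model'' with functional $\sharp\tilde\pi - I(\tilde\pi)$ has the same structural issue as the $T_{N,\varepsilon}$ model (the cost can be negative, superadditivity alone does not give integrability), so invoking Kingman's theorem would require a proof of integrability and a.s.\ finiteness analogous to Theorem~\ref{t:existence}(ii), which you do not supply. Your proposal therefore does not constitute a complete proof.

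The paper's route (Proposition~\ref{prop:monotonie}) sidesteps all of this. After the same measure-preserving rescaling $\psi$, it writes $\frac{1-\mu_{p,\varepsilon}(u)}{\varepsilon^{\kappa_p(u)}}$ as a sup over paths of a sum of terms, and observes that \emph{for a fixed realization of $\Xi$ and each fixed path, every term is monotone non-decreasing in $\varepsilon$} (via the elementary Lemma~\ref{lemme:monotonie} for $p\in(1,\infty)$, $u=e_1$, and by inspection in the $p=1,\infty$ cases). Monotonicity of each term gives monotonicity of the sup and hence of the time constant ratio; combined with the two-sided bound of Theorem~\ref{theo:mu_p}(i) this yields convergence with no interchange of limits and no limit model at all. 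This is also where the case hypotheses actually enter in the paper: they are exactly the cases in which the rescaled per-segment cost has the required monotonicity, not (as you suggest) cases in which a Taylor error estimate can be made uniform. The paper's Remark after Proposition~\ref{prop:monotonie} explicitly notes that this term-by-term monotonicity fails outside those cases, which is a rather different obstruction than the multi-scale mixing you describe.

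Two correct points in your write-up worth keeping: the reduction from $\tilde\mu$ to $\mu$ via Theorem~\ref{theo:tilde_mu_p}(iii), and the reduction of the $p=2$ case to $u=e_1$ by isotropy; both are valid and essentially what the paper does.
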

Thus, for $p=1$ and $p=2$, we obtain the existence of the limit of $(1-\tilde\mu_{p, \varepsilon} (u) )/ \varepsilon^{\kappa_p(u)}$ when $\varepsilon$ goes to zero for any direction $u$. For $p=2$, we have $\kappa_2 (u) = \frac{2}{d+1}$ whatever $u$, as prescribed by the isotropy of the model, and we obtain that
\begin{equation*}
\tilde\mu_{2, \varepsilon} (u)  =1 - C \varepsilon^{2/(d+1)} + o ( \varepsilon^{2/(d+1)})
\end{equation*}
for any $u\in \mathbb{R}^d$ such that $\| u \|_2 =1$, with $C = K_p (u)$ that does not depend on such a $u$. For $p=1$, our results are similar to the ones previously obtained on the graph $\mathbb{Z}^d$ for the corresponding time constant $ \bm{\mu}_{\varepsilon} (z)$, see Equation \eqref{e:casZd} below.

\begin{remark}
The image of the homogeneous Poisson point process $\Xi$ on $\mathbb{R}^d$ with intensity $1$ by the map $x \in \mathbb{R}^d \mapsto x \varepsilon^{-1/d}$ is a homogeneous Poisson point process  on $\mathbb{R}^d$ with intensity $\varepsilon$. Thus, the image of  $ \Sigma_{N, 1 , \varepsilon^{1/d}/2}$ by this map has the same distribution as $\Sigma_{N,\varepsilon,1/2}$. Using the homogeneity of a norm, we deduce that, for any $z\in \R^d$, 
$$\varepsilon^{1/d}\tilde{T}_{\| \cdot \|_p,\varepsilon,{1/2}}(0,z)\overset{\tiny{\mbox{law}}}{=}{\tilde{T}}_{\| \cdot \|_p,1, {\varepsilon^{1/d}/2}}(0,z\varepsilon^{1/d})$$ which in turn  implies that $$\tilde \mu_{p,\varepsilon} (z):=  \tilde \mu_{\| \cdot \|_p, 1 ,\varepsilon^{1/d}/2} (z) = \tilde \mu_{\| \cdot \|_p,\varepsilon,  1/2} (z) .$$ 
Hence, all our results also hold for this model.

\end{remark}


\subsection{Background and motivation}
\label{s:background_motivation}

\paragraph{First-passage percolation on $\mathbb{Z}^d$.} Classical first-passage percolation was introduced by Hammersley and Welsh \cite{Hammersley-Welsh} in 1965 as a toy model to understand propagation phenomenon on a graph. Consider the graph $\mathbb{Z}^d$  ($d\geq 2$) equipped with the set of edges between nearest neighbours, and associate with the edges a family $(\tau(e))$ of non-negative i.i.d. variables, with common distribution $F$. The variable $\tau (e)$ is called the passage time of $e$, and represents the time needed to cross the edge $e$. This interpretation leads naturally to the definition of a random pseudo metric $\tau$ on $\mathbb{Z}^d$: for any two points $x,y\in\mathbb{Z}^d$, $\tau (x,y)$ is the minimal amount of time needed for the propagation to occur from $x$ to $y$. We refer to the surveys \cite{AuffingerDamronHanson,Kesten-saint-flour} for an overview on classical results in this model. By Kingman's ergodic subadditive theorem, it is well known that under a moment assumption on $F$, for any $z \in \mathbb{R}^d \setminus \{0\}$, we have
\begin{equation}
\label{e:cte_temps_Zd}
  \lim_{n\rightarrow \infty } \frac{\tau (0, \lfloor nz \rfloor)}{n} = \bm{\mu}_{F} (z) \quad \textrm{ a.s. and in }L^1 \,,
  \end{equation}
where $\lfloor nz \rfloor$ designates the coordinate-wise integer part of $nz$. The term $ \bm{\mu}_{F} (z)$ appearing here is a constant, again called the {\it time constant} of the model in the direction $z$, that depends on $z$ but also on the dimension $d$ of the underlying graph $\mathbb{Z}^d$ and on the distribution $F$ of the passage times. The function $z \mapsto  \bm{\mu}_{F} (z)$ is either the null function or a norm on $\mathbb{R}^d$. As for the continuous counterpart of the time constant defined previously, only few is known about $ \bm{\mu}_{F} (z)$, and in particular its value is not computable. 
It is thus relevant to look at particular and simple choices of distributions $F$. One natural choice among others is to consider a Bernoulli distribution with parameter $1-\varepsilon$, {\it i.e.}, $\mathbb{P} [\tau(e) = 0] = \varepsilon = 1 - \mathbb{P} [ \tau (e) =1] $. Let us denote by $ \bm{\mu}_{\varepsilon} (z)$ the corresponding time constant. The authors investigate in \cite{BGT1} at what speed $ \bm{\mu}_{\varepsilon}  (z)$ goes to $\bm{\mu}_{0}  (z) = \|z\|_1$ when $\varepsilon$ goes to $0$, and they prove that
\begin{equation}
\label{e:casZd}
 \bm{\mu}_{\varepsilon} (z) = \| z\|_1 - C(z)\, \varepsilon^{1/d_1 (z)} + o ( \varepsilon^{1/d_1 (z)})
\end{equation}
where $d_1 (z)$ is the number of non null coordinates of $z$, and $C(z)$ is a constant whose dependence on $z$ is partially explicit.

\paragraph{Motivation.} The asymptotic of \eqref{e:casZd} is related to the geometry of the lattice where the natural reference distance is the $\|\cdot\|_1$ distance.
The primary motivation of the article is to understand the influence of distance on the asymptotic. For this purpose, it is natural to work in a continuous setting where we have all freedom on the choice of the norm. Our main result shows in particular that forgetting the lattice but keeping the $\|\cdot\|_1$ distance does not change the asymptotic at first order. In the case of the Euclidean norm ($p=2$), we prove that the behaviour of this asymptotic is different, as prescribed by the isotropy of the model. We want to enlighten the fact that the proofs we build here to handle the $p$-norm for any $p\in [1,+\infty]$ are very different from the ones used in \cite{BGT1} to prove \eqref{e:casZd} (we refer to Section \ref{s:sketch} for a detailed sketch of the proofs).

\subsection{Framework}

\paragraph{A related model.} Since the diameter of each ball in the Boolean model is $\varepsilon^{1/d}$ and since we investigate the first order when $\varepsilon$ tends to $0$,
it is natural to neglect the possible overlaps between the balls and to rule out paths traveling inside a ball without reaching its center.
We are thus led to a new model where we only consider polygonal paths going through points of $\Xi$ (with the possible exception of the starting and ending points) and where at each passage at a ball center, the path saves time $\epsilon^{1/d}$. We see this time saved as a reward and refer to
the model as the model with rewards. Let us rephrase the definition (a formal definition is given in Section  \ref{s:def}). In the new model, the travel time of a path $\pi$ is replaced by what we call abusively (because it can be negative) an alternative travel time $T_{p,\epsilon}(\pi)$ defined as the $p$-length of $\pi$ minus $\epsilon^{1/d}$ times the number of rewards that $\pi$ collects. Optimizing on relevant paths from $x$ to $y$ leads to the definition of the alternative travel time $T_{p,\epsilon}(x,y)$ :
\[
{T}_{p,\epsilon} (x,y) = \inf_{\pi} T_{p,\epsilon} (\pi)
\]
where the infimum runs over all polygonal paths from $x$ to $y$. For large $\epsilon>0$ one easily checks that $T_{p,\epsilon}(x,y)=-\infty$ for any points $x,y$. For small enough $\epsilon>0$ (which is the regime we are interested in) Theorem \ref{t:existence} ensures  
the existence of an alternative time constant: for every $z\in \mathbb{R}^d$, there exists a constant $\mu_{p,\varepsilon} (z)$ such that
\begin{equation*}
\lim_{s \rightarrow \infty} \frac{T_{p,\varepsilon} (0,sz)}{s} =  \mu_{p,\varepsilon} (z) \quad \textrm{ a.s. and in }L^1.
\end{equation*}
Obviously, $ \mu_{p,0} (z)=\tilde \mu_{p,0} (z) =\|z \|_p$.

The model with rewards is more amenable to analysis than the model with balls. We will prove first the analog of Theorems \ref{theo:tilde_mu_p} and \ref{theo:monotonie} for $\mu_{p,\varepsilon}$ instead of $\tilde \mu_{p,\varepsilon}$. Then, we will compare $\mu_{p,\varepsilon}$ to $\tilde \mu_{p,\varepsilon}$ when it is possible, or adapt the proofs from the model with rewards to the model with balls when needed. We refer to Theorem \ref{theo:mu_p} for a precise statement of the results we obtain for the model with rewards.

\paragraph{A general norm $N$.} The most natural norms to consider are probably the $\|\cdot\|_1$ norm (corresponding to the discrete framework) and the $\|\cdot\|_2$ norm (the isotropic setting and thus maybe the easiest one). However, if one wants to understand the influence of geometry, it is natural to investigate general norms. We will indeed work in such a general setting and isolate the relevant geometric properties of the unit ball which influence the asymptotic behaviour. We will obtain somehow abstract results (see Theorems \ref{theo:mu} and \ref{theo:FPP}) and then specialize them to the $\|\cdot\|_p$ norms in order to get the results announced in the previous sections. As we will see in Section \ref{s:Np}, handling the $p$-norms requires to treat separately the cases $p=1$ and $p+\infty$ that are somehow degenerated, it is thus valuable to work in the setting of a general norm $N$ and to specialize to the $p$-norms subsequently.


\subsection{Definitions}
\label{s:def}

We gather in this section all the definitions we need. Some of them are straightforward generalization to a general norm $N$ of definitions that have been stated informally in the previous sections for the $p$-norm.

\paragraph{The space $\mathbb{R}^d$.} Throughout the paper, $d \in \mathbb{N}$ designates the dimension of the space, and we always assume that $d\geq 2$. Let $\cdot$ designate the standard scalar product on $\mathbb{R}^d$. 
We write $N$ to designate a general norm on $\mathbb{R}^d$.
We denote by $B_N(c,r)$ the closed ball for the norm $N$ centered at $c \in \mathbb{R}^d$ and of radius $r\in (0,\infty)$, and write $B_N (r)= B_N(0,r)$ for short. We designate by $| \cdot |$ the Lebesgue measure - usually on $\mathbb{R}^d$ or on $\mathbb{R}^{d-1}$, we omit the precision when it is not confusing, but write $| \cdot |_k$ for the Lebesgue measure on $\mathbb{R}^k$ if needed. For a subset $A$ of $\mathbb{R}^d$, we denote by $A^c = \mathbb{R}^d \setminus A$ its complement. Throughout the paper, $u$ designates a vector of $\mathbb{R}^d$ such that $N(u)=1$. For such a vector $u$, that lies on the boundary of $B_N(1)$, we can consider $u+H$ a supporting hyperplane of $B_N(1)$ at $u$, {\it i.e.}, satisfying
$$ \forall v\in H \,,\qquad N(u+v) \geq N(u) = 1\,. $$
Such a supporting hyperplane always exists by convexity of $B_N(1)$ (but it may not be unique). For given $u$ and $H$, we designate by $u^\star$ the vector normal to $H$ and satisfying $u \cdot u^\star = 1$.

\paragraph{The Poisson point process.} Let $\Xi$ be a Poisson point process on $\mathbb{R}^d$ with intensity $|\cdot | = |\cdot |_d$. Let $\varepsilon >0$.
We consider two different models (see $T$ and $\tilde T$ below). The points of $\Xi$ are seen as the locations of rewards of value $\varepsilon ^{1/d}$, or as centers of balls of diameter $\varepsilon^{1/d}$ for the norm $N$. In this setting, we define the Boolean model $\Sigma_{N, \varepsilon}$ as
$$ \Sigma_{N, \varepsilon} = \bigcup_{c \in \Xi} B_N \left(c, \frac{\varepsilon^{1/d}}{2} \right) .$$

\paragraph{The paths.} In the course of this paper, we will consider two types of path, referred to as \emph{polygonal paths} or \emph{generalized paths}.

\begin{defi}\label{def:path}
A \textbf{polygonal path}  $\pi$ (or path, for short) is a finite sequence of distinct points $\pi = (x_0, x_1, \dots , x_n)$, except maybe that $x_0=x_n$, and such that $x_i \in \Xi$ for all $i\in \{1,\dots, n-1 \}$ (we emphasize the fact that we do not require that $x_0$ or $x_n$ belong to $\Xi)$. We denote $\Pi (x,y)$ the set of polygonal paths $\pi = (x_0 = x, x_1, \dots , x_n=y)$ from $x$ to $y$.

A \textbf{generalized path}  $\pi$ is a finite sequence of distinct points $\pi = (x_0, x_1, \dots , x_n)$, except maybe that $x_0=x_n$ (without consideration for the Poisson point process $\Xi$ at all). We denote $\hat \Pi (x,y)$ the set of generalized paths from $x$ to $y$.

\end{defi}

Sometimes, we will also need to consider the polygonal curve associated with $\pi=(x_0,\ldots,x_n)$ which we will denote by $[\pi]$. For a given segment $[a,b] \subset \mathbb{R}^d$, we write $N ([a,b]) = N(b-a)$ for the $N$-length of the segment $[a,b]$. By a slight abuse of notation, since $[a,b] \cap \Sigma_{N, \varepsilon}$ is a finite disjoint union of segments, we denote by $N ([a,b] \cap \Sigma_{N, \varepsilon})$ the finite sum of the $N$-lengths of the disjoint connected components of $[a,b] \cap \Sigma_{N, \varepsilon}$. We define $N([a,b] \cap \Sigma_{N, \varepsilon}^c) = N ([a,b]) - N ([a,b] \cap \Sigma_{N, \varepsilon})$. For any generalized path $\pi = (x_0, x_1, \dots , x_n)$, we denote by $N(\pi)$ the length of $\pi$ for the norm $N$, {\it i.e.}, 
$$ N(\pi) = \sum_{i=1}^n N( [x_{i-1} , x_{i}]) =  \sum_{i=1}^n N( x_{i} - x_{i-1} ) \,. $$
We denote by $N(\pi \cap \Sigma_{N, \varepsilon}^c)$ the $N$-length of $\pi$ outside $\Sigma_{N, \varepsilon}$, {\it i.e.},
$$ N(\pi  \cap \Sigma_{N, \varepsilon}^c ) = \sum_{i=1}^n N( [x_{i-1} , x_{i}] \cap  \Sigma_{N, \varepsilon}^c)  \,. $$
We denote by $\sharp \pi$ the cardinality of $\Xi\cap[\pi]$. Note that for any generalized path, this quantity is a.s. finite and for a polygonal path $\pi =  (x_0, x_1, \dots , x_n)$ such that $x_0,x_n\notin \Xi$, we have a.s.
\begin{equation*}
   \sharp \pi =  n-1 \,.
\end{equation*}

\paragraph{The model with balls.} We first consider the model where a ball of diameter $\varepsilon^{1/d}$ is located at each point of $\Xi$. The {\it travel time} $\tilde T_{N,\varepsilon} (\pi)$ of a generalized path $\pi =(x_0, x_1, \dots , x_n) $ is
$$ \tilde T_{N,\varepsilon} (\pi) = N(\pi  \cap \Sigma_{N, \varepsilon}^c ).$$
We define the travel time $\tilde T_{N,\varepsilon}(x,y)$ between $x$ and $y$ as
$$ \tilde T_{N,\varepsilon}  (x,y)=\inf_{\pi \in\hat \Pi (x,y)} \tilde T_{N,\varepsilon}(\pi)\,.$$
We want to emphasize that the optimization can be made on polygonal paths instead of generalized paths, {\it i.e.},
$$ \tilde T_{N,\varepsilon}  (x,y)=\inf_{\pi \in  \Pi (x,y)} \tilde T_{N,\varepsilon}(\pi) \,.$$
This equality is stated in Proposition \ref{prop:geodespoly}.
We usually denote by $\tilde \gamma_{N,\varepsilon} (x,y)$ any {\it geodesic} from $x$ to $y$ for the time $ \tilde T_{N,\varepsilon}$, {\it i.e.}, $ \tilde \gamma_{N,\varepsilon} (x,y)\in \hat \Pi (x,y)$ such that $ \tilde T_{N,\varepsilon}  (x,y) =  \tilde T_{N,\varepsilon}( \tilde \gamma_{N,\varepsilon} (x,y))$
(see Proposition \ref{prop:geodes} for the existence of a geodesic). For short, we write $\tilde \gamma_{N,\varepsilon} (z) := \tilde \gamma_{N,\varepsilon} (0,z)$. The time constant in this model is defined through standard subadditive arguments in the following way: for every $z\in \mathbb{R}^d$, there exists a constant $ \tilde \mu_{ N,\varepsilon} (z) \ge 0$ such that
\begin{equation}
\label{e:cte_temps_1}
\lim_{s \rightarrow \infty} \frac{ \tilde{T}_{N,\varepsilon} (0,sz)}{s} = \tilde \mu_{N,\varepsilon} (z) \quad \textrm{ a.s. and in }L^1.
\end{equation}
We recall that when $N$ is the $p$-norm, we write $\tilde T_{p,\varepsilon}$, $\tilde \gamma_{p,\varepsilon}$ and $\tilde \mu_{p,\varepsilon}$ instead of $\tilde T_{\| \cdot \|_p,\varepsilon}$, $\tilde \gamma_{\| \cdot \|_p,\varepsilon}$ and $\tilde \mu_{\| \cdot \|_p,\varepsilon}$.

\paragraph{The model with rewards.} We now consider the model where a reward of value $\varepsilon^{1/d}$ is located on each point of $\Xi$. The {\it alternative travel time} $T_{N,\varepsilon}(\pi)$ of a path $\pi =(x_0, x_1, \dots , x_n) $ is
$$ T_{N,\varepsilon}(\pi) = N(\pi) - \varepsilon^{1/d} \sharp \pi \,.$$
We emphasize that $T_{N,\varepsilon}(\pi)$ may be negative, the terminology {\it alternative travel time} is chosen by analogy with the model with balls but may be confusing. We define the alternative travel time $T_{N,\varepsilon}(x,y)$ between $x$ and $y$ by
\begin{equation}
\label{e:ajoutv3}
 T_{N,\varepsilon}  (x,y)=\inf_{\pi \in\hat\Pi (x,y)} T_{N,\varepsilon}(\pi) \,.
 \end{equation}
As previously remarked, for large $\epsilon>0$ the model degenerates and $T_{N,\epsilon}(x,y)=-\infty$ for any points $x,y$. However, for small enough $\epsilon>0$ (which is the relevant regime for our asymptotic study) Theorem \ref{t:existence} ensures that a.s., for any points $x,y\in \R^d$, there exists a geodesic (that is, a minimiser in \eqref{e:ajoutv3}) among polygonal paths $\Pi(x,y)$. We usually denote by $ \gamma_{N,\varepsilon} (x,y)$ any chosen geodesic between $x$ and $y$. For short, we write $ \gamma_{N,\varepsilon} (z) := \gamma_{N,\varepsilon} (0,z)$. Moreover, an alternative time constant $\mu_{N,\varepsilon} (z)$  can be defined in this setting. Note that Theorem \ref{t:existence} does not say anything about the uniqueness of the geodesics. For some norms $N$ (for instance $N = \| \cdot \|_1$), they can in fact be non unique. We recall that when $N$ is the $p$-norm, we write $T_{p,\varepsilon}$, $\gamma_{p,\varepsilon}$ and $\mu_{p,\varepsilon}$ instead of $ T_{\| \cdot \|_p,\varepsilon}$, $\gamma_{\| \cdot \|_p,\varepsilon}$ and $ \mu_{\| \cdot \|_p,\varepsilon}$.


\subsection{Ideas of the proofs}
\label{s:sketch}

We present here a detailed outline of the proof. We hope it will be useful to the reader before beginning the reading of the proofs, and also that it will serve as a guide throughout the reading of the article.

\paragraph{Theorem \ref{theo:tilde_mu_p} and its version for the model with rewards.} 
The proof of Theorem \ref{theo:tilde_mu_p} relies on the analogous result in the model with rewards and on its proof.
These results are stated in the first item of Theorem \ref{theo:mu} in the case of a general norm $N$.
For the reader's convenience, we restate them here: for small $\varepsilon>0$,
\begin{equation}\label{e:just_below_intro}
\bar{g}_u(C_1\varepsilon^{-1})\le 1- \mu_{N,\varepsilon}(u)\le g_u(C_2\varepsilon^{-1}).
\end{equation}
In the above inequalities $\bar g_u$ and $g_u$ are two functions related to the local geometry of the unit ball $B_N(1)$ in the neighborhood of $u$, a vector such that $N(u)=1$.
We will give the definitions below.
For the moment, the important point is that the lower and the upper bounds match at first order when $N$ is symmetric enough, 
which is the case when $N=\|\cdot\|_p$ for some $p \in [1,+\infty]$ where both are of order $\varepsilon^{\kappa_p(u)}$.

Let us be  slightly more precise about the links between the results in the two models.
The upper bound in Theorem \ref{theo:tilde_mu_p} is a direct consequence of the upper bound in \eqref{e:just_below_intro} (which is the hardest part)
once we establish that $1- \tilde \mu_{N,\varepsilon}(u) \le 1- \mu_{N,\varepsilon}(u)$.
The lower bound in Theorem \ref{theo:tilde_mu_p} is proven by adapting the proof of the lower bound in \eqref{e:just_below_intro}.

We focus hereafter on the model with rewards.

\paragraph{Upper bound in \eqref{e:just_below_intro}.}
Recall that $u \in \R^d$ is such that $N(u)=1$. 
We also need a hyperplane $H$ such that $u+H$ is a supporting hyperplane of $B_N(1)$ at $u$.
If $B_N(1)$ is differentiable at $u$, then $u+H$ is simply the tangent hyperplane to $B_N(1)$ at $u$.
Aiming at proving \eqref{e:just_below_intro}, we investigate $T_{N,\varepsilon}(0,su)$ for large $s$.
We are thus interested in polygonal paths from $0$ to $su$ with many points (and thus many $\varepsilon^{1/d}$ rewards) and not too large length.
The following family of sets then naturally comes into play.
For $\eta>0$, we define (see Figure \ref{Fig:boule_intro}, that will be duplicated later in the article)
\[
K_\eta(u):=\{v\in H \,:\, N(u+v)\le 1+\eta\} .
\]
\begin{figure}[h]
\begin{center}
\begin{tabular}{c c}
\includegraphics[width=7cm]{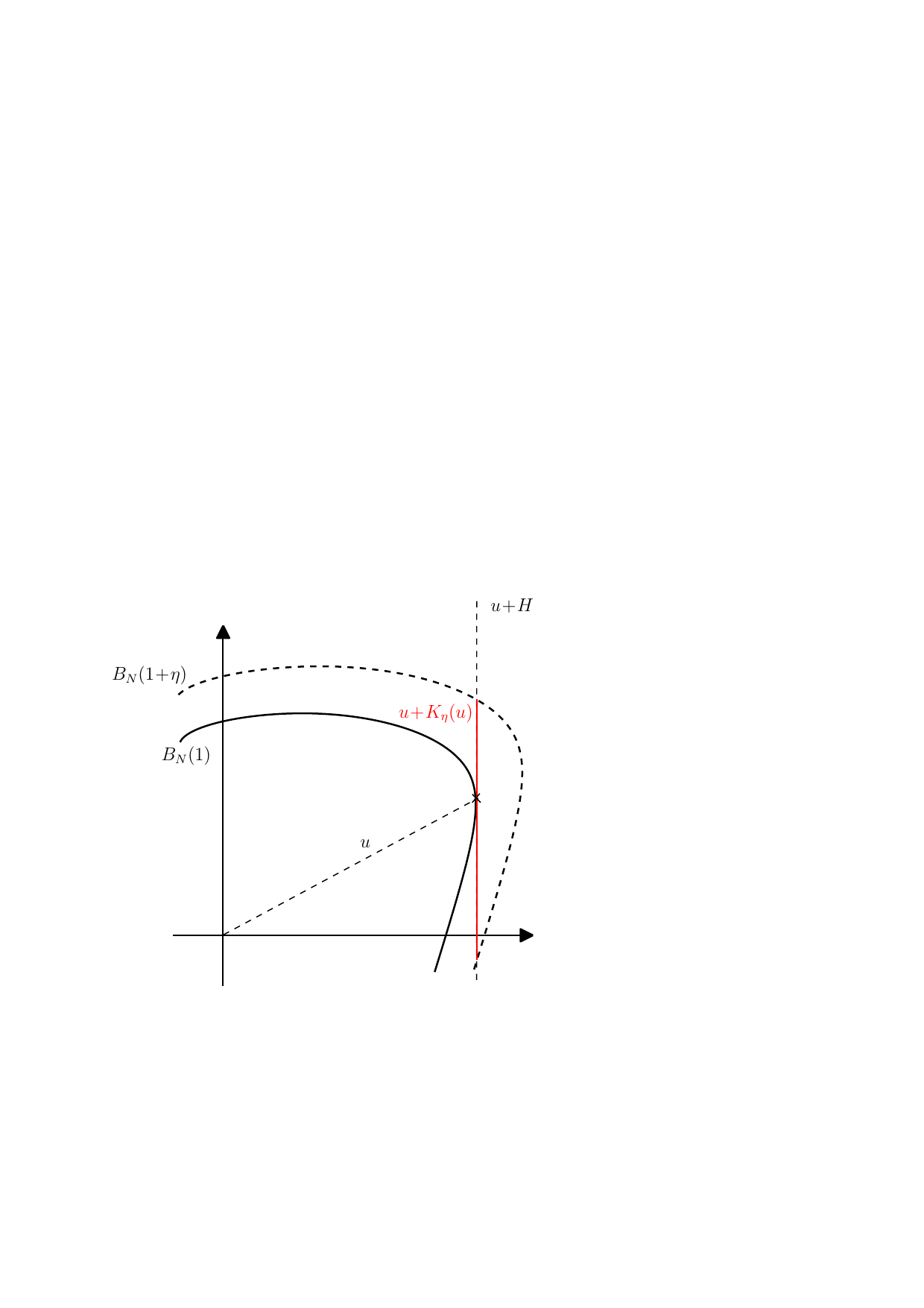} &
\includegraphics[width=7cm]{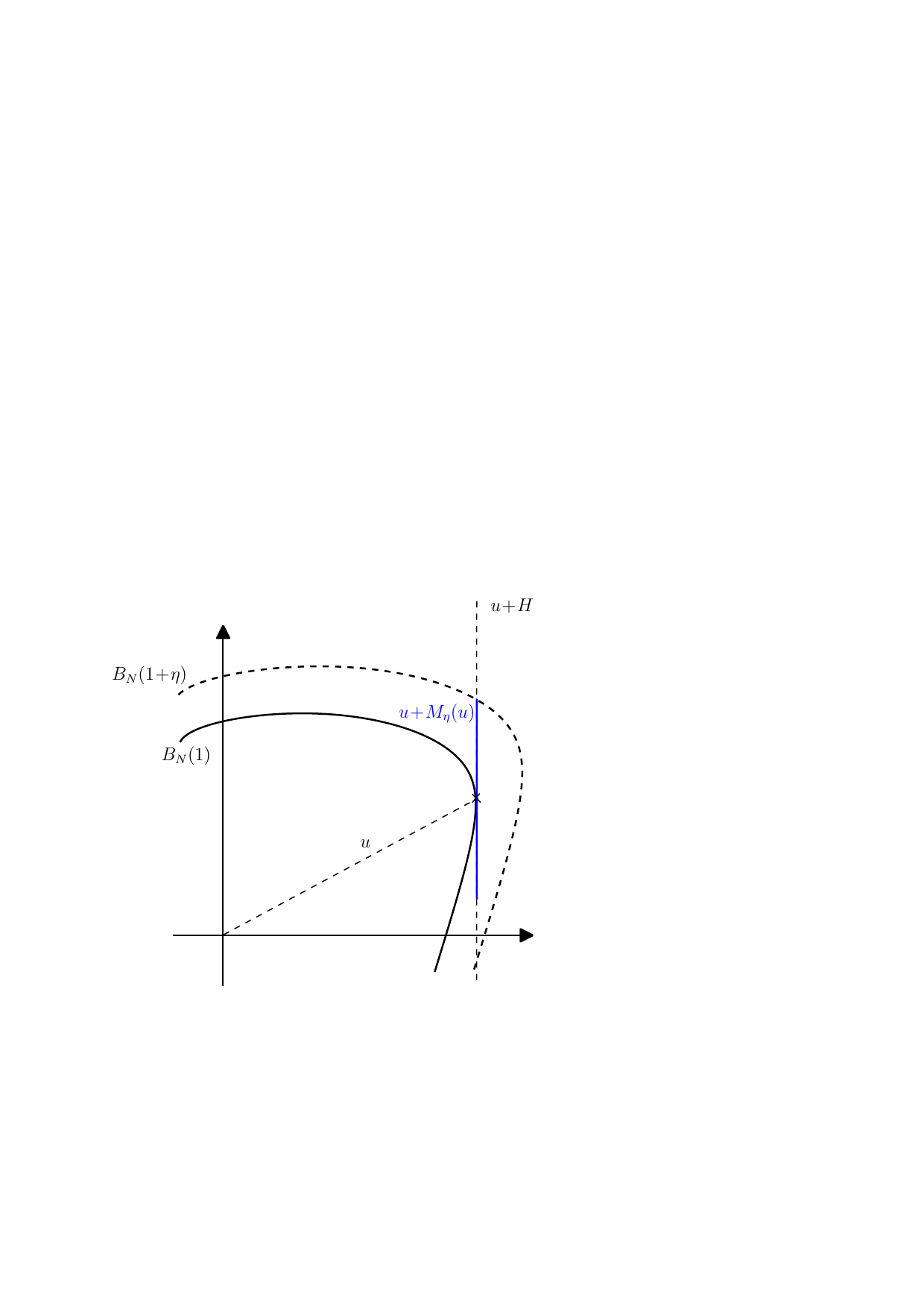}
\end{tabular}
\end{center}
\caption{Illustration of the sets $K_\eta(u)$ and $M_\eta(u)$ (note that $M_\eta(u)$ is here strictly included in $K_\eta(u)$).}
\label{Fig:boule_intro}
\end{figure}The relevance of $K_\eta(u)$ is as follows.
Consider a polygonal path $\pi=(x_0,\dots,x_n)$ from $0$ to $su$ for some $s \ge 0$ such that each of its increments $x_i-x_{i-1}$ belongs to the cone with direction $u$ and  base $K_\eta(u)$. By definition of $K_\eta(u)$, we have $N(\pi) \le s(1+\eta)$.
At a heuristic level, it may thus be natural to restrict our attention to such paths for some $\eta$ conveniently depending on $\varepsilon$.
Moreover, one can guess using a greedy algorithm (such a greedy algorithm for a different cone is used and thus defined below in the paragraph about lower bound) 
that the number of points of such a path is at most of order $|K_\eta(u)|^{1/d}s$
where $|K_\eta(u)|$ is the $(d-1)$-dimensional Lebesgue measure of $K_\eta(u)$.
Of course, the geodesics cannot be all of the previous kind. 
However, we prove the following closely related result which validates the above heuristic (see Proposition \ref{prop:Chernov}).
For some constant $c>0$,  for large $s \ge 0$ and for any path $\pi$ from $0$ to $su$, we have:
\begin{equation}\label{eq:upperbound_nombre_intro}
N(\pi) \leq (1+ \eta) s\; \Longrightarrow \; \sharp \pi \leq c|K_\eta(u)|^{1/d} s.
\end{equation}
Thus any upper bound on $N(\gamma_{N,\varepsilon}(su))$ provides in turn an upper bound on $\sharp \gamma_{N,\varepsilon}(su)$.
Note moreover that for any geodesic $\gamma_{N,\varepsilon}(su)$ from 0 to $su$, we have
\begin{equation}\label{eq:upperbound_N_intro}
N(\gamma_{N,\varepsilon}(su))  = T_{N,\varepsilon} (0,su) + \varepsilon^{1/d} \sharp \gamma_{N,\varepsilon}(su) \leq s +  \varepsilon^{1/d} \sharp \gamma_{N,\varepsilon}(su).  
\end{equation}
Thus any upper bound on  $\sharp \gamma_{N,\varepsilon}(su)$ also provides an upper bound on $N(\gamma_{N,\varepsilon}(su))$. 
Finally, we need the following crude inequality (see \eqref{e:ajoutcrude}) coming from greedy animals: for some constant $c$, almost surely, for any path $\pi$ from $0$ to a faraway point, 
\begin{equation}\label{eq:upperbound_greedy_intro}
\sharp \pi \le cN(\pi).
\end{equation} 
From this first upper bound, using repeatedly and alternately \eqref{eq:upperbound_nombre_intro} and \eqref{eq:upperbound_N_intro} we get, after a finite number of steps, 
good upper bounds on $\sharp \gamma_{N,\varepsilon}(su)$ and on $N(\gamma_{N,\varepsilon}(su))$. 
From the upper bound on $\sharp \gamma_{N,\varepsilon}(su)$  we deduce straightforwardly the desired lower bound on $T_{N,\varepsilon}(0,su)$ and thus on $\mu_{N,\varepsilon} (u)$.
The obtained bound involves the family of sets $K_\eta(u)$ through the function $g_u$, which is the reciprocal of the decreasing homeomorphism defined by $\eta \mapsto \eta^{-d}|K_\eta(u)|$.

\paragraph{Lower bound in \eqref{e:just_below_intro}.} The desired lower bound on $1-\mu_{N,\varepsilon} (u)$ comes from the construction of a good path by a greedy algorithm.
We need the following symmetric variant of $K_\eta(u)$ (see Figure \ref{Fig:boule_intro}):
\[
\qquad M_\eta(u):=K_\eta(u)\cap (-K_\eta(u)).
\]
\begin{figure}[h]
\begin{center}
\includegraphics[width=12cm]{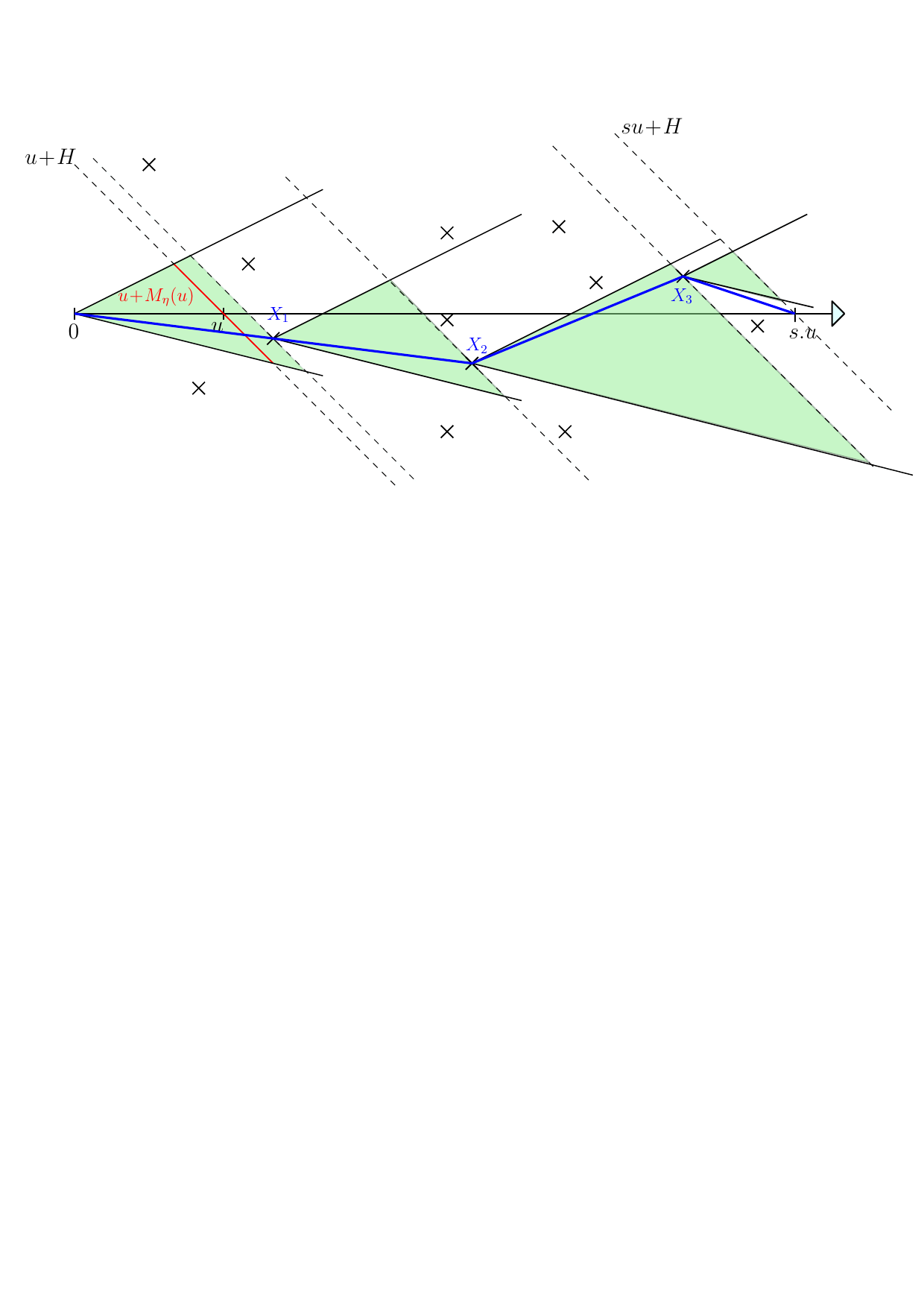}
\end{center}
\caption{Illustration of the greedy algorithm. The path constructed is drawn in blue. At each step, the path goes through the first point of $\Xi$ in the green cone in front of it.} 
\label{Fig:glouton_intro}
\end{figure}We now refer to Figure \ref{Fig:glouton_intro}.
Instead of the cone of direction $u$ and base $K_\eta(u)$ considered before, we now consider the cone of direction $u$ and base $M_\eta(u)$.
We define $X_1$ as the first (for the coordinate along $u$ in the direct sum $\R^d = \R u \oplus H$) point of $\Xi$ in the cone, $X_2$ for the first point in the translated cone with apex at $X_1$ and so on.
One key point (and this is the reason for the symmetrisation) is that the expected value of the i.i.d.\ increments of the path belongs to $\R_+ u$.
Therefore, at first order, this allows us to build a path from $0$ to a faraway point $su$ with a good control on the number of points and on the length of this path.
This is sufficient to yield the desired upper bound on $\mu_{N,\varepsilon} (u)$.
The obtained bound involves the family of sets $M_\eta(u)$ through the function $\bar g_u$, which is the reciprocal of the decreasing homeomorphism defined by 
$\eta \mapsto \eta^{-d}|M_\eta(u)|$.

\paragraph{On the proof of Theorem \ref{theo:monotonie}.} The existence of the limit stated in Theorem \ref{theo:monotonie} comes from the monotonicity of
\[
\epsilon \mapsto \frac{1 - \mu_{p, \varepsilon}(u)}{\varepsilon^{\kappa_p}(u)}\,,
\]
the bounds provided by \eqref{e:just_below_intro} in this setting (yielding that the above function is bounded) 
and to the close link between $\mu_{p, \varepsilon}(u)$ and $\tilde \mu_{p, \varepsilon}(u)$ (see Theorem \ref{theo:FPP} $(iii)$).
The monotocity stated above is derived, after a proper scaling, from monotonicity at a very basic level (before taking any limit or infimum, see Remark \ref{r:monotonie}).

\paragraph{On the link with \eqref{e:casZd}.} As explained in Section \ref{s:background_motivation}, in \cite{BGT1} the authors derived \eqref{e:casZd}, which
describes an asymptotic behavior for first passage percolation on $\Z^d$. This behavior aligns with the asymptotic behavior observed in our continuous setting when $N=\|\cdot\|_1$.
However, the overall strategy in \cite{BGT1} differs significantly.
The key result in \cite{BGT1} is to relate the discrete non oriented model to a semi-oriented and semi-continuous model on $\R^{d_1(u)} \times \Z^{d_2(u)}$ which can be understood through scaling.


\subsection{Organization of the paper}

The paper is organized as follows. 

Section \ref{s:mu} is devoted to prove some  basic results about the model with rewards: existence and classical properties of the time constant $\mu_{N,\epsilon}$ and of geodesics $\gamma_{N,\epsilon}(x,y)$ (see
Theorem \ref{t:existence}).

Section \ref{s:geo} focuses on some geometrical properties of objects associated with the norm $N$. 

Section \ref{s:mu2} is devoted to the study of the model with rewards. We follow the strategy sketched in Section \ref{s:sketch}. In Section \ref{s:glouton}, we prove a lower bound on $1-\mu_{N,\varepsilon} (u)$ using a greedy algorithm. In Section \ref{s:chernov}, we obtain an upper bound on $1-\mu_{N,\varepsilon} (u)$ using a bootstrap argument, that involves a control on $\sharp \gamma_{N,\varepsilon} (su)$ and $N (\gamma_{N,\varepsilon} (su))$.

Section \ref{s:tildemu} examines the model with balls centered at the points of $\Xi$. First we prove in Section \ref{s:geodes} that we can deal with geodesics that have nice properties. Then we compare $\tilde \mu_{N,\varepsilon} (u)$ with $\mu_{N,\varepsilon} (u)$ in Section \ref{s:comp}. Finally we adapt in Sections \ref{s:lowbis} and \ref{s:upbis} the proofs written in the study of the previous model to this setting to get the results that are not direct consequences of the comparison between $\tilde \mu_{N,\varepsilon} (u)$ and $\mu_{N,\varepsilon} (u)$.

Finally in Section \ref{s:Np} we specialize to the case where $N=\| \cdot \|_p$, the $p$-norm, for $p\in [1,\infty]$. 
In Sections \ref{s:H}, \ref{s:evalK} and \ref{s:evalM}, we compute estimates of the geometric quantities of interest. These estimates allow us to prove Theorem \ref{theo:tilde_mu_p}, and its analog for the model with rewards (Theorem \ref{theo:mu_p}) in Section \ref{s:appli}. By a monotonicity argument, we finally prove Theorem \ref{theo:monotonie} in Section \ref{s:monotonie}.

The Appendix gathers the proofs of Sections \ref{s:mu} and \ref{s:geo} that are less innovative or more technical.

\subsection{Notations}

From Section \ref{s:mu} to Section \ref{s:tildemu}, we work with a fixed general norm $N$. For that reason, we will omit the subscript $N$ in our notations, since no confusion is possible (for example we write $T_\varepsilon$, $\mu_\varepsilon$ instead of $T_{N,\varepsilon}$, $\mu_{N,\varepsilon}$). In Section \ref{s:Np}, we manipulate $p$-norms with different values of $p$, thus we reintroduce the subscript $p$ to emphasize the dependence on $p$.


\section{Existence and basic properties of $\mu_{\varepsilon}$}
\label{s:mu}

Fix a norm $N$ on $\R^d$ and consider the model where a reward of value $\varepsilon^{1/d}$ is located at each point of $\Xi$. Recall that the {\it alternative travel time} $T_\varepsilon(\pi)$ of a generalized path $\pi =(x_0,\ldots,x_n)$ is defined by
$$T_\varepsilon(\pi)=N(\pi)-\varepsilon^{1/d}\sharp \pi,$$
where $[\pi]:=\bigcup_{i=1}^n [x_{i-1},x_i]$  is the polygonal curve with vertices $(x_0,\ldots,x_n)$ and $\sharp \pi$ denotes the cardinality of the a.s. finite set $\Xi\cap 
[\pi]$.

Recall the definitions of generalized paths and of polygonal paths given in Definition \ref{def:path}. The first result states that minimizing the travel time over generalized paths does not get a better result than minimizing over polygonal paths.

\begin{lemma} For all $x,y\in \R^d$,
$$T_\varepsilon(x,y):=\inf \{ T_\varepsilon(\pi) \, | \, \pi \in\hat \Pi (x,y)\}=\inf \{ T_\varepsilon(\pi) \, | \, \pi \in \Pi (x,y)\}.$$
\end{lemma}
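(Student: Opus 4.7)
Since $\Pi(x,y) \subset \hat\Pi(x,y)$, the inequality $\inf_{\hat\Pi(x,y)} T_\varepsilon \le \inf_{\Pi(x,y)} T_\varepsilon$ is immediate. For the reverse inequality, the plan is to associate with any generalized path $\pi = (x_0, \ldots, x_n) \in \hat\Pi(x,y)$ a polygonal path $\pi' \in \Pi(x,y)$ satisfying $T_\varepsilon(\pi') \le T_\varepsilon(\pi)$, and then pass to the infimum.

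The construction goes as follows. Enumerate as $\{y_1, \ldots, y_k\}$ the (a.s.\ finite) set $(\Xi \cap [\pi]) \setminus \{x, y\}$ of distinct Poisson points lying on the polygonal curve $[\pi]$ and distinct from both endpoints. Order these points as $y_{\sigma(1)}, \ldots, y_{\sigma(k)}$ according to the first time they are visited when traversing $[\pi]$ continuously from $x$ to $y$, and set
$$\pi' := (x, y_{\sigma(1)}, \ldots, y_{\sigma(k)}, y).$$
The interior vertices of $\pi'$ lie in $\Xi$ by construction, and since the $y_j$'s were chosen distinct from $x$ and from $y$, all vertices of $\pi'$ are pairwise distinct (except possibly the degenerate case $x = y$). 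Thus $\pi' \in \Pi(x,y)$. When $k = 0$, this reduces to $\pi' = (x, y)$.

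It remains to compare the travel times. By the triangle inequality for $N$, applied along a continuous parametrization of $[\pi]$ from $x$ to $y$, each straight segment of $\pi'$ joining two consecutive first-visit points has $N$-length at most the $N$-length of the corresponding arc of $[\pi]$ between them; the same holds for the initial segment leaving $x$ and the final segment arriving at $y$. Summing these inequalities yields $N(\pi') \le N(\pi)$. On the other hand, every point of $\Xi \cap [\pi]$ lies on $[\pi']$: either it is one of the $y_{\sigma(i)}$, or it coincides with an endpoint $x$ or $y$, which still belongs to $[\pi']$. Hence $\sharp \pi' \ge \sharp \pi$, and combining the two inequalities gives
$$T_\varepsilon(\pi') = N(\pi') - \varepsilon^{1/d}\sharp\pi' \;\le\; N(\pi) - \varepsilon^{1/d}\sharp\pi = T_\varepsilon(\pi),$$
which finishes the proof. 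There is no genuine obstacle; the only care needed is the bookkeeping to ensure the vertices of $\pi'$ are distinct and to handle the event that $x$ or $y$ happens to lie in $\Xi$.
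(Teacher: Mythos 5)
Your proof is correct and is essentially the same argument the paper gives: extract the Poisson points on $[\pi]$ (other than the endpoints), order them by first visitation along the curve, form the polygonal path through them, and use the triangle inequality together with the observation that no rewards are lost. You are slightly more explicit about vertex distinctness and about the case where an endpoint lies in $\Xi$, but the construction and both key inequalities ($N(\pi') \le N(\pi)$ and $\sharp\pi' \ge \sharp\pi$) are the same as in the paper.
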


\begin{proof}
Let $\pi_1=(x=x_0,\ldots,x_n=y) \in \hat \Pi (x,y)$ be a generalized path. Let us denote by $a_1,\ldots,a_k$ the points of $\Xi$ which are in $[\pi_1]\setminus\{x_0,x_n\}$ ranked by their  order of apparition in the curve $[\pi_1]$. Then $\pi_2=(x_0,a_1,\ldots,a_k,x_n)$ belongs to $\Pi (x,y)$. We have by construction
 $\sharp \pi_1\le \sharp \pi_2$ and by triangle inequality, $N(\pi_2)\le N(\pi_1)$. This yields 
$T_\varepsilon(\pi_2)\le T_\varepsilon(\pi_1)$.

\end{proof}

As already noted, the {\it alternative travel time} $T_\varepsilon(\pi)$ of a polygonal path $\pi$ may be negative. Let us also note that the alternative travel time between $x$ and $y$ does not satisfy the triangle inequality $T_\varepsilon(x,y)\le T_\varepsilon(x,z)+T_\varepsilon(z,y)$.
The setting is not exactly the most classic one.
However, with some almost classical argument, we can prove the following theorem. Since  the proofs are not difficult nor very innovative, we give them for the sake of completeness in the Appendix.

\begin{theorem}
\label{t:existence}
For $\varepsilon$ small enough (depending on $N$ and $d$), 
\begin{itemize}
\item[(i)]There exists a.s. for all $x,y\in \R^d$ a finite polygonal path $\gamma_{\varepsilon}(x,y) \in \Pi (x,y)$ from $x$ to $y$ such that 
$$T_{\varepsilon}(x,y)=T_{\varepsilon}(\gamma_{\varepsilon}(x,y)) \,.$$
\item[(ii)] For every $z\in \mathbb{R}^d$, there exists a deterministic constant $\mu_{\varepsilon} (z)$ such that
\begin{equation}
\label{e:cte_temps_2}
\lim_{s \rightarrow \infty} \frac{T_{N,\varepsilon} (0,sz)}{s} =  \mu_{\varepsilon} (z) \quad \textrm{ a.s. and in }L^1.
\end{equation}
Moreover, the function $\mu_{\varepsilon} (\cdot)$ is a norm on $\R^d$.
\end{itemize}
\end{theorem}
For all $x,y\in \R^d$, any finite polygonal path $\gamma_{\varepsilon}(x,y) \in \Pi (x,y)$ from $x$ to $y$ such that $T_{\varepsilon}(x,y)=T_{\varepsilon}(\gamma_{\varepsilon}(x,y)) $ is called a {\em geodesic} from $x$ to $y$ for the time $T_{\varepsilon} (x,y)$.


\section{Some geometric results}
\label{s:geo}
As already mentioned in the introduction (Section \ref{s:sketch}),  the exponent $\kappa_p(u)$ given in Equation \eqref{e:ajout5} is expressed as a function of the rate of decay of two  geometric sets related to the unit ball of the $p$-norm, $K_\eta(u)$ and $M_\eta(u)$. In this section, we recall the definition of these geometric sets and give some  property of their volume. In particular, we show that the volume of one of them is also of the same order of magnitude as an integral, which will appear naturally later on. These results rely on proofs that are not essential for understanding the remainder of the proof and would disrupt the flow if included here. Therefore, they have been moved to the appendix.

\medskip 
Throughout the paper we will repeatedly consider an element $u\in \mathbb{R}^d$ and a hyperplane $H$ satisfying
\begin{equation}
\label{e:(u,H)}
N(u) = 1 \qquad \textrm{and} \qquad \forall v \in H, \; N(u+v)\ge N(u)=1 \,.
\end{equation}
In other words, $u$ lies on the boundary of $B_N(1)$, and $u+H$ is a supporting hyperplane of $B_N(1)$ at $u$. For any couple $(u,H)$ satisfying \eqref{e:(u,H)}, we denote by $u^\star$ the vector normal to $H$ such that $u\cdot u^\star=1$:
\begin{equation}
\label{e:u*}
\forall v \in H, \; u^\star \cdot v = 0 \qquad \textrm{and} \qquad u \cdot u^\star = 1 \,.
\end{equation}
For a given $\eta \geq 0$, let us consider the two following subsets of $H$ (see Figure \ref{Fig:boule}):
 $$K_\eta(u):=\{v\in H \,:\, N(u+v)\le 1+\eta\} \qquad \mbox{ and } \qquad M_\eta(u):=K_\eta(u)\cap (-K_\eta(u))\,.$$
We emphasize the fact that $K_\eta(u)$ and $M_\eta(u)$ depend on $u$ but also on $H$, even if this dependence on $H$ is not explicit in the notation.
The sets $K_\eta (u)$ and $M_\eta (u)$ are subsets of $H$, which is a hyperplane. We thus write $|K_\eta (u)| := |K_\eta (u)|_{d-1}$ (resp. $|M_\eta (u)| := |M_\eta (u)|_{d-1}$) to designate the $(d-1)$-dimensional Lebesgue measure of $K_\eta (u)$ (resp. $M_\eta (u)$). For $\eta >0$, we define
$$h_u(\eta):=\eta^{-d}|K_\eta(u)| \qquad \mbox{ and } \qquad \bar{h}_u(\eta)=\eta^{-d}|M_\eta(u)|.$$
\begin{prop}
\label{prop:function_h}
The functions $\eta\mapsto h_u(\eta)$ and $\eta\mapsto \bar{h}_u(\eta)$ are decreasing homeomorphism from $(0,\infty)$ to $(0,\infty)$. We denote by $g_u$ and $\bar{g}_u$ their inverse functions. Note that $g_u$ and $\bar{g}_u$ are also decreasing homeomorphism from $(0,\infty)$ to $(0,\infty)$.
\end{prop}

\setcounter{figure}{0}
\begin{figure}
\begin{center}
\begin{tabular}{c c}
\includegraphics[width=7cm]{boule0.pdf} &
\includegraphics[width=7cm]{boule1.pdf}
\end{tabular}
\end{center}
\caption{Illustration of the sets $K_\eta(u)$ and $M_\eta(u)$ (note that $M_\eta(u)$ is here strictly included  in $K_\eta(u)$).}
\label{Fig:boule}
\end{figure}

We introduce now a function $I$ defined by an integral and check that this function is of the same order as the function $h_u$.
\begin{lemma}
\label{l:I+}
For any $(u,H)$ satisfying \eqref{e:(u,H)} and $u^\star$ defined by \eqref{e:u*}, for $\eta>0$, let us define,
$$ I^+ (\eta) = \int_{\mathbb{R}^d \cap \{ x: x\cdot u^\star >0 \}} \exp ( - (N(x) - (1-\eta) x \cdot u^\star)) dx. $$
Then there exists two constants $c_1,c_2>0$ depending only on $d$ and $N$ such that, for any $(u,H)$ satisfying \eqref{e:(u,H)}, for $\eta>0$, we have
\begin{equation}
\label{eq:I_1_v1}
c_1h_u(\eta) \le I^+(\eta)\le c_2h_u(\eta).
\end{equation}
Moreover, define also, for $\eta >0$,
$$ I (\eta) = \int_{\mathbb{R}^d } \exp ( - (N(x) - (1-\eta) x \cdot u^\star)) dx \,, $$
then for $\eta$ small enough (depending on $d,N,u$ and $H$), we have 
\begin{equation}\label{eq:I_1}
c_1 h_u(\eta) \le I(\eta)\le 2c_2 h_u(\eta).
\end{equation}
\end{lemma}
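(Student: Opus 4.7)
The plan is to reduce the integral $I^+(\eta)$ to a one-dimensional integral against the measure of the level sets $K_r(u)$ by a change of variables, and then use the monotonicity of $r \mapsto |K_r(u)|/r^{d-1}$ established in (the proof of) Proposition \ref{prop:function_h} to squeeze it between constants times $h_u(\eta)$.

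First I would decompose $\R^d = \R u \oplus H$, writing $x = tu + v$ with $t\in\R$ and $v\in H$. Since $u\cdot u^\star = 1$ and $v\cdot u^\star = 0$ for $v\in H$, the coordinate $t$ equals $x\cdot u^\star$, and the Jacobian of this parametrization is $1/\|u^\star\|_2$ (the determinant of the matrix whose columns are $u$ and any orthonormal basis of $H$). By Lemma \ref{lem:CDV}, this factor is bounded above and below by constants depending only on $N$ and $d$. The restriction $x\cdot u^\star > 0$ becomes $t>0$. I would then perform the dilation $v = tw$, $w\in H$, of Jacobian $t^{d-1}$, and use the homogeneity $N(tu+tw) = t\, N(u+w)$. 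Writing $N(u+w)-1 \ge 0$ (since $H$ supports $B_N(1)$ at $u$), this gives
\[
I^+(\eta) = \frac{1}{\|u^\star\|_2}\int_H\!\!\int_0^\infty t^{d-1}\exp\bigl(-t(\eta + N(u+w)-1)\bigr)\,dt\,dw = \frac{(d-1)!}{\|u^\star\|_2}\int_H \frac{dw}{(\eta + N(u+w)-1)^d}\,.
\]

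Next I would apply the layer-cake identity $s^{-d} = d\int_s^\infty r^{-d-1}dr$ to rewrite the right-hand side as
\[
\frac{d!}{\|u^\star\|_2}\int_0^\infty \frac{|K_r(u)|}{(\eta+r)^{d+1}}\,dr\,,
\]
after the substitution $r = s-\eta$, noting that $\{w\in H: N(u+w)-1 \le r\} = K_r(u)$. To compare with $h_u(\eta) = \eta^{-d}|K_\eta(u)|$, I would split the integral at $r=\eta$ and use the monotonicity property obtained in the proof of Proposition \ref{prop:function_h}: the map $r\mapsto |K_r(u)|/r^{d-1}$ is non-increasing, so $|K_r(u)| \ge (r/\eta)^{d-1}|K_\eta(u)|$ on $(0,\eta]$ and $|K_r(u)| \le (r/\eta)^{d-1}|K_\eta(u)|$ on $[\eta,\infty)$. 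Bounding $(\eta+r)^{d+1}$ by $(2\eta)^{d+1}$ (resp.\ $r^{d+1}$) on the first (resp.\ second) piece and doing elementary integration yields the upper bound $2 h_u(\eta)$ for the integral; the trivial lower bound $|K_r(u)| \ge (r/\eta)^{d-1}|K_\eta(u)|$ on $[0,\eta]$ together with $(\eta+r)^{d+1} \le (2\eta)^{d+1}$ gives a lower bound of order $h_u(\eta)$. Multiplying by the bounded constant $d!/\|u^\star\|_2$ gives \eqref{eq:I_1_v1}.

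For \eqref{eq:I_1} I would split $I(\eta) = I^+(\eta) + I^-(\eta)$ (up to the null set $\{x\cdot u^\star = 0\}$). On $\{x\cdot u^\star < 0\}$ and for $\eta \in (0,1)$, the integrand is bounded by $e^{-N(x)}$, so $I^-(\eta) \le \int_{\R^d} e^{-N(x)}dx =: C < \infty$. Since $h_u(\eta) \ge A\eta^{-1} \to \infty$ as $\eta\to 0$ (from Proposition \ref{prop:function_h}), for $\eta$ small enough $I^-(\eta) \le c_2 h_u(\eta)$, which combined with \eqref{eq:I_1_v1} gives both inequalities in \eqref{eq:I_1}. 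The only real computation is the change of variables; no step should present a serious obstacle, though one must take care that all constants implicit in the bounds depend only on $N$ and $d$, which is guaranteed by Lemma \ref{lem:CDV}.
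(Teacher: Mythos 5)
Your proof is correct but takes a genuinely different route from the paper's. Both start identically: the decomposition $x = tu + v$ with $t = x\cdot u^\star$, $v\in H$, and the Jacobian $1/\|u^\star\|_2$ controlled by Lemma \ref{lem:CDV}. But after that, the paper invokes the John--Loewner theorem to sandwich $K_\eta(u)$ between an ellipsoid $c+J$ and $c+(d-1)J$, builds an affine map $\overline{\psi}$ from that ellipsoid, and reduces $I^+(\eta)$ to $\eta^{-d}\det(\psi)\,G(\eta)/\|u^\star\|_2$ where $G(\eta)$ is sandwiched between two absolute constants and $\det(\psi)$ is comparable to $|K_\eta(u)|$. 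You instead evaluate the $t$-integral explicitly via $\int_0^\infty t^{d-1}e^{-at}\,dt = (d-1)!/a^d$, apply the layer-cake identity to land on $I^+(\eta) = \frac{d!}{\|u^\star\|_2}\int_0^\infty \frac{|K_r(u)|}{(\eta+r)^{d+1}}\,dr$, and squeeze this one-dimensional integral using the monotonicity of $r\mapsto |K_r(u)|/r^{d-1}$, which is exactly the inequality already extracted in the proof of Proposition \ref{prop:function_h}. Your argument is arguably cleaner: it avoids John--Loewner entirely, yields explicit constants, and reuses a fact the paper already proved, whereas the paper's ellipsoid argument is a heavier geometric tool for what this lemma needs. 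One small slip in your writeup: for the \emph{upper} bound on the $[0,\eta]$ piece you write ``bounding $(\eta+r)^{d+1}$ by $(2\eta)^{d+1}$,'' but for an upper bound on $I^+$ you need a \emph{lower} bound on the denominator; the correct bound there is $(\eta+r)^{d+1}\ge\eta^{d+1}$ together with $|K_r|\le|K_\eta|$, which gives the contribution $h_u(\eta)$, so the conclusion $I^+ \le \frac{2d!}{\|u^\star\|_2}h_u(\eta)$ survives unchanged. The treatment of $I(\eta)$ via $I^-(\eta)\le\int e^{-N(x)}\,dx < \infty$ and $h_u(\eta)\to\infty$ is the same as the paper's.
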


Finally, we  state a basic geometrical lemma that will be useful in what follows.
\begin{lemma}\label{lem:CDV} There exist two constants $c,c'>0$ depending only on $d$ and $N$ such that, for all $(u,H)$ satisfying \eqref{e:(u,H)}, for $u^\star$ defined by \eqref{e:u*}, we have 
$$c\le \|u^\star\|_2 \le c'.$$
\end{lemma}

\begin{proof} Using that $N$ and $||\cdot ||_2$ are equivalent, there exist $\alpha,\beta>0$ such that 
$$B_{\|\cdot\|_2}(\alpha)\subset B_N(1)\subset B_{\|\cdot\|_2}(\beta).$$
So, we get the lower bound on $\|u^\star\|_2$ noticing that, for $u\in B_N(1)$,
$$ 1=u\cdot u^\star\le \|u\|_2\|u^\star\|_2\le \beta \|u^\star\|_2.$$
Moreover, $u+H$ is a supporting hyperplane of $B_N(1)$ at $u$ and $u^\star$ is normal to $H$ so $B_N(1)$ is included in $\{v\in \R^d \,:\, v\cdot u^\star\le u\cdot u^\star\}$. Applying this inequality to  $\alpha u^\star/\|u^\star\|_2  \in B_{\|\cdot\|_2}(\alpha)\subset  B_N(1)$, we get  
$$\alpha\|u^\star\|_2=\alpha \frac{u^\star}{\|u^\star\|_2}\cdot u^\star\le u\cdot u^\star=1.$$ 
Finally, we get 
$$\beta^{-1}\le \|u^\star\|_2 \le \alpha^{-1}.$$
\end{proof}


\section{Study of $\mu_\varepsilon (u)$}
\label{s:mu2}
The present section is devoted to the study of the time constant $\mu_\varepsilon (u)$ associated with a general norm $N$ for the model with rewards located at the points of $\Xi$. With all $(u,H)$ satisfying \eqref{e:(u,H)} we associate the functions $g_u$ and $\bar{g}_u$ given by Proposition \ref{prop:function_h}. 
We prove the following theorem.
\begin{theorem}\label{theo:mu} There exist constants $C_1,C_2>0$ (depending only on $d$ and $N$) such that for all $(u,H)$ satisfying \eqref{e:(u,H)}, the following assertions hold.
\begin{itemize}
\item[(i)]  For $\varepsilon$ small enough (depending on $d,N,u$ and $H$), we have 
$$\bar{g}_u(C_1\varepsilon^{-1})\le 1- \mu_{\varepsilon}(u)\le g_u(C_2\varepsilon^{-1})\,.$$
\item[(ii)] For $\varepsilon$ small enough (depending on $d,N,u$ and $H$), we have, for any $\delta>0$, a.s. for large $s$, for any geodesic $\gamma_{\varepsilon}(su) \in \Pi (0,su)$ from $0$ to $su$ for the time $T_{\varepsilon} (0,su)$,
\begin{eqnarray*}
(1-\delta)\varepsilon^{-1/d} \bar{g}_u(C_1\varepsilon^{-1})s \le& \sharp \gamma_{\varepsilon}(su) &\le (1+\delta)\varepsilon^{-1/d}g_u(C_2\varepsilon^{-1})s \\
 &   N(\gamma_{\varepsilon}(su))-s  &\le  (1+\delta) g_u(C_2\varepsilon^{-1})s.
  \end{eqnarray*}
\end{itemize}

\end{theorem}
Notice that how small $\varepsilon$ has to be in Points (i) and (ii) of Theorem \ref{theo:mu} depends on $N$ and $d$ but also on $u$ and $H$. This dependence appears for instance through the use of Lemma \ref{l:I+} (see how \eqref{eq:I_1_v1} implies \eqref{eq:I_1} for $\eta$ small enough).

It is a little bit frustrating not to be able to give a lower bound on $N(\gamma_{\varepsilon}(su))$ in Theorem \ref{theo:mu}. In fact, such a lower bound exists in some specific cases, for directions $u$ in which $B_N(1)$ does not have a $(d-1)$-dimensional flat edge: see Proposition \ref{prop:minN} in Section \ref{s:chernov} for more details. However, we have no hope with our approach to obtain a lower bound in general direction, see Remark \ref{r:longueur} for a concrete example.


\subsection{Proof of the lower bound on $1-\mu_\varepsilon(u)$}
\label{s:glouton}

This section is devoted to the proof of a lower bound on $1-\mu_\varepsilon(u)$, {\it i.e.}, the proof of Proposition \ref{t:thm8} - and incidentally, as a corollary, we get a lower bound on $\sharp \gamma_\varepsilon (su)$ for large $s$ too, see Corollary \ref{c:lowersharp} below. To do so, it is enough to exhibit a path from $0$ to $su$ (for a given direction $u$ and $s$ large enough) whose $N$-length is not too high, but that gathers enough rewards. This is done in Proposition \ref{t:thm8} through a greedy algorithm, as sketched in Section \ref{s:sketch}. This algorithm adds recursively to the path $\pi = (0,x_1 \cdots , x_{n-1})$ the closest point $x_n$ of the Poisson point process $\Xi$ (in a certain sense) which is located in a cone with origin $x_{n-1}$ and oriented in the direction $u$. This cone condition gives us a good upper bound on the $N$-length of the path we construct. However, we have to be careful in our procedure to be sure that the path $\pi$ does not go too far away from the prescribed direction $u$. We thus take care to compensate any gap previously created between the direction of $x_{n-1}$ and the prescribed direction $u$ by choosing wisely the direction of the next point $x_n$ the path collects. This is the reason why it is the set $M_\eta (u)$, which is a symmetrized version of $K_\eta (u)$, that arises in the proof.

Fix $(u,H)$ satisfying \eqref{e:(u,H)}, {\em i.e.}, fix a direction $u\in \mathbb{R}^d$ with $N(u)=1$, and fix $H$ such that $u+H$ is a supporting hyperplane of $B_N(1)$ at $u$. Recall that
$$M_\eta(u):= \{ v \in H \,:\, N(u+v) \leq 1+ \eta \mbox{ and } N(u-v)  \leq 1+ \eta \} \qquad \mbox{ and } \qquad \bar{h}_u(\eta):=\eta^{-d}|M_\eta(u)| \,.$$
Proposition \ref{prop:function_h} states that   $\eta\mapsto \bar{h}_u(\eta)$ is a decreasing homeomorphism from $(0,\infty)$ to $(0,\infty)$ so we can define its inverse function $\bar{g}_u$ which is also a decreasing homeomorphism from $(0,\infty)$ to $(0,\infty)$. To prove the lower bound on $1-\mu_\varepsilon(u)$ given in (i) of Theorem \ref{theo:mu}, we prove in fact the following proposition.

\begin{prop}
\label{t:thm8}
There exists a constant $c>0$ (depending only on $d,N$) such that, for all $(u,H)$ satisfying \eqref{e:(u,H)}, for all $\varepsilon >0$,
for all $\eta\in (0,1)$, we have
$$ \mu_\varepsilon (u) \leq 1 + \eta - c\varepsilon^{1/d}  |M_\eta(u) |^{1/d}.  $$
In particular, with $C_1:=\left( \frac{2}{c} \right)^d$, we have that for $\varepsilon$ small enough (depending on $d,N,u$ and $H$),
  $$ \mu_\varepsilon (u) \leq 1 -  \bar{g}_u \left( C_1 \varepsilon^{-1} \right) \,.$$
\end{prop}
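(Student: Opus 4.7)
The plan is to prove the first display by constructing explicitly, via a greedy algorithm, a polygonal path from $0$ toward $su$ whose travel time is asymptotically $s\bigl[(1+\eta) - c\varepsilon^{1/d}|M_\eta(u)|^{1/d}\bigr]$, then reading off the bound on $\mu_\varepsilon(u)$ from the convergence \eqref{e:cte_temps_2}. The second display then follows by an algebraic specialization: the choice $\eta := \bar g_u(C_1\varepsilon^{-1})$ with $C_1 := (2/c)^d$ gives $|M_\eta(u)| = \eta^d C_1/\varepsilon$ by definition of $\bar g_u$, so that $c\varepsilon^{1/d}|M_\eta(u)|^{1/d} = 2\eta$ and the first display becomes $\mu_\varepsilon(u) \le 1 - \eta$, valid as soon as $\varepsilon$ is small enough to ensure $\bar g_u(C_1\varepsilon^{-1}) < 1$.

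The geometric input will be the cone $\mathcal C := \{tu + tw : t > 0,\ w \in M_\eta(u)\}$: because $M_\eta(u) \subset K_\eta(u)$, any point $tu+tw \in \mathcal C$ has $u^\star$-coordinate exactly $t$ and $N$-length at most $t(1+\eta)$. Starting from $x_0 := 0$, I define recursively $x_n := x_{n-1} + \mathrm{Step}_n$, where $\mathrm{Step}_n = t_n u + t_n w_n$ is the point of $(\Xi - x_{n-1}) \cap \mathcal C$ minimizing the $u^\star$-coordinate; by translation invariance of $\Xi$ the $\mathrm{Step}_n$ are i.i.d. A direct change of variables in the spirit of the footnote of Lemma~\ref{l:I+} gives $|\mathcal C \cap \{x\cdot u^\star \le r\}| = r^d|M_\eta(u)|/(d\|u^\star\|_2)$, whence $\mathbb P(t_1 > r) = \exp(-r^d|M_\eta(u)|/(d\|u^\star\|_2))$ and $\mathbb E[t_1] = \kappa\,|M_\eta(u)|^{-1/d}$ with $\kappa = \kappa(u,d)$ uniformly bounded away from $0$ and $\infty$ by Lemma~\ref{lem:CDV}. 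Crucially, since $M_\eta(u) = -M_\eta(u)$, conditionally on $t_n$ the increment $w_n$ is symmetrically distributed on $M_\eta(u)$, so has zero mean and bounded support.

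Writing $S_n := \sum_{k\le n} t_k$, $V_n := \sum_{k\le n} t_k w_k$, and $\pi_n := (0,x_1,\ldots,x_n)$, one has $\sharp \pi_n = n$ and $N(\pi_n) \le (1+\eta) S_n$, so $T_\varepsilon(\pi_n) \le (1+\eta)S_n - n\varepsilon^{1/d}$. Setting $n(s) := \inf\{n : S_n \ge s\}$ and concatenating $\pi_{n(s)}$ with the segment $[x_{n(s)}, su]$, whose $N$-length is at most $t_{n(s)} + N(V_{n(s)})$, yields a (generalized) path from $0$ to $su$. The strong law of large numbers gives $n(s)/s \to 1/\mathbb E[t_1]$ and $S_{n(s)}/s \to 1$ almost surely; the zero-mean bounded-variance random walk $(V_n)$ satisfies $V_{n(s)} = o(s)$ a.s.; and the last-step overshoot $t_{n(s)}$ is likewise $o(s)$. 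Dividing by $s$ and passing to the limit in \eqref{e:cte_temps_2} delivers $\mu_\varepsilon(u) \le (1+\eta) - \varepsilon^{1/d}/\mathbb E[t_1]$, which is the announced bound with $c := 1/\kappa$.

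The main point requiring care is the transverse drift: without the symmetrization the greedy increments could carry a linear bias in some direction of $H$ and the resulting walk $V_n$ would grow linearly in $n$, destroying the estimate. The symmetry $M_\eta(u) = -M_\eta(u)$ forces the drift to vanish and reduces $V_{n(s)}$ to $o(s)$; this is the conceptually essential step and explains why the symmetrized set $M_\eta(u)$, rather than $K_\eta(u)$ itself, appears in the lower bound. The endpoint correction and the last-step overshoot are standard sub-linear perturbations.
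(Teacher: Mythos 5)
Your proposal is correct and follows essentially the same route as the paper's own proof: both construct a greedy path along a cone whose transverse cross-section is $M_\eta(u)$, use the same change-of-variables to compute the cone volume and hence the exponential law and mean of the step length, invoke the symmetry $M_\eta(u)=-M_\eta(u)$ to kill the transverse drift, and pass to the limit via the law of large numbers (you use the first-passage index $\inf\{n:S_n\ge s\}$ where the paper uses the last-passage index $\sup\{n:S_n\le s\}$, but this is cosmetic). The specialization $\eta=\bar g_u(C_1\varepsilon^{-1})$ yielding $c\varepsilon^{1/d}|M_\eta(u)|^{1/d}=2\eta$ is also the paper's argument. One small point to make fully explicit: your constant $c=1/\kappa$ depends on $u$ through $\|u^\star\|_2$, so to get the constant promised in the statement (depending only on $d,N$) you should take $c:=\inf_{N(u)=1}1/\kappa(u,d)$, which is positive precisely by the uniform bound on $\|u^\star\|_2$ from Lemma~\ref{lem:CDV}; this is exactly the step the paper performs when passing from $c_u$ to $c_1$.
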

\begin{proof}
To prove this proposition, we construct, using a greedy algorithm, a path $\pi_s$ from $0$ to $su$ with length close to $s(1+\eta)$ but going through a number of points of $\Xi$ larger than $c  |M_\eta(u) |^{1/d}s$ (see Figure \ref{Fig:glouton}). 

We first introduce some notation. For $s\geq0$, let
\begin{align*}
C_\eta (s) & :=  \{ x=\lambda u +v\, : \, 0 \leq \lambda \leq s \,,\, v \in H \,,\,N(\lambda u + v) \leq \lambda (1+\eta), N(\lambda u - v) \leq \lambda (1+\eta) \}\,. 
\end{align*}
Hence,  $C_\eta (s)$ is a cone with axis $u$  and of basis  $M_\eta$ (basis not necessarily orthogonal to its axis). Thus, if $u^\star$ denotes the  vector orthogonal to $H$ such that $u\cdot u^\star=1$, we have\footnote{
This can be proven using the same change of variable $\Psi$. Indeed, 
\begin{align*}
 |C_\eta (s)|  & = \int_{\mathbb{R}^d} \1_{x \in C_\eta (s)} dx = \frac{1}{\| u^\star\|_2} \int_{\mathbb{R}^d} \1_{0\leq \lambda \leq s} \1_{v/\lambda \in M_\eta (u) } d\lambda \, dv\\
 & = \frac{1}{\| u^\star\|_2} \int_{\mathbb{R}^d} \lambda^{d-1} \1_{0\leq \lambda \leq s} \1_{w \in M_\eta (u) } d\lambda \, dw = \frac{1}{\|u^\star\|_2} | M_\eta(u) | \frac{s^d}{d} \,.
 \end{align*}
}
$$   |C_\eta (s) | = \frac{1}{d  \|u^\star\|_2} | M_\eta(u) | s^d. $$
In the algorithm, we construct a path $(0,x_1,\dots , x_n, su)$ from $0$ to $su$ such that $x_{i+1}$ is the first point of $\Xi$ in the cone $x_i + C_\eta(\infty)$. We prove that the length of such path cannot be much larger than $(1+\eta) s$ and its number of points is at least of order $|M_\eta(u)|^{1/d} s$. Optimizing on $\eta$ yields then Proposition \ref{t:thm8}.

Recall that $\Xi$ denotes the points of a Poisson point process with intensity $1$ on $\mathbb{R}^d$.
By induction we define a sequence $(X_n)_{n\ge 0}$ in $\R^d$ with $X_0=0$
and  if
$$ \lambda_{n+1} := \inf \{ s>0 \,:\, (X_n + C_\eta (s)) \cap \Xi \neq \emptyset \} \,,$$
we set $X_{n+1} = X_n +\lambda_{n+1} u + W_{n+1}$ with  $W_{n+1} \in H$ and such that $\{X_{n+1}\}= (X_n+ C_\eta (\lambda_{n+1})) \cap \Xi$. We define   $S_n = \sum_{i=1}^n \lambda_i $ and $V_n=\sum_{i=1}^n W_i$. By the proprieties of Poisson point processes, $(\lambda_i)_{i\ge 1}$ and $(W_i)_{i\ge 1}$ are i.i.d. random variables and since $M_\eta(u)$ is a symmetric set, $W_1$ has also a symmetric distribution and so $V_n$ is a symmetric random walk.

Let us now define, for $s>0$,
$$ Z(s) := \sup \{ n\geq 0 \,:\, S_n \leq s \} $$
and consider the path $\pi_s = (0,X_1,\dots, X_{Z(s)}, su)$ (see Figure \ref{Fig:glouton} for an illustration of the greedy algorithm). Let us find an upper bound of $T_\varepsilon (\pi_s)$ by finding an upper bound of its length and a lower bound of $Z(s)$, its number of points.

\begin{figure}
\begin{center}
\includegraphics[width=12cm]{glouton.pdf}
\end{center}
\caption{Illustration of the greedy algorithm. The path constructed is drawn in blue. At each step, the path goes through the first point of $\Xi$ in the green cone in front of it.} 
\label{Fig:glouton}
\end{figure}

The sequence $(\lambda_i)_{i\geq 1}$ is  i.i.d.  and we have
\begin{align*}
\mathbb{P} (\lambda_{1} \geq t  ) & =  \exp \left( - \frac{1}{d \|u^\star\|_2}  | M_\eta(u) |t^d\right)
\end{align*}
so that
$$ \mathbb{E} (\lambda_1) =\left(\frac{ | M_\eta(u) |}{d \|u^\star\|_2} \right)^{-1/d} \int_0^\infty \exp (-v^d) dv := \frac{| M_\eta |^{-1/d} }{2c_u}  $$
where $$2c_u:=\left(\frac{1}{d \|u^\star\|_2}\right)^{1/d} \left(\int_0^\infty \exp (-v^d) dv \right)^{-1}.$$
Since $S_n = \sum_{i=1}^n \lambda_i $, a standard renewal theorem (see for instance \cite{Grimmett_Stirzaker} Section 10.2) yields that, for $s$ large enough, we have
\begin{equation}
\label{e:ajout3}
c_u |M_\eta(u) |^{1/d} s \leq Z(s) \leq 3c_u |M_\eta(u) |^{1/d} s \,.
\end{equation}
This gives a lower bound for the number of points of $\pi_s$. It remains now to find an upper bound for the length of $\pi_s$. Note first that if we consider the path $\pi'_s = (0,X_1,\dots , X_{Z(s)})$, we have by construction
$$ N (X_{i+1} - X_i) \leq (1+\eta) \lambda_{i+1} $$
so
$$ N(\pi'_s) \leq (1+\eta) S_{Z(s)} \,. $$
Thus 
$$ N(\pi_s) \leq (1+\eta) S_{Z(s)} + N(su - (S_{Z(s)} u + V_{Z(s)})) \leq (1+\eta) S_{Z(s)} + s - S_{Z(s)} + N (V_{Z(s)})\leq (1+\eta) s + N (V_{Z(s)}) \,.$$
Let us now prove that $N(V_{Z(s)})/s$ tends a.s. to $0$.
The process $(V_k)_{k\ge 0}$ is a symmetric random walk with increments $(W_k)_{k\geq 1}$ and by construction, we have
$$  N(\lambda_1 u + W_1) \leq \lambda_1 (1+\eta) $$
which yields
$$ N(W_1) \leq \lambda_1 (1+\eta) + N(\lambda_1 u) \leq 3 \lambda_1 $$
for $\eta \in (0,1)$. This implies that $\E(N(W_1))$ is finite. 
By the law of large numbers, we obtain that
$$\lim_{n\to \infty} \frac{V_{n}}{n}=0 \quad \mbox{a.s.}$$
Using that for $s$ large enough  $Z(s)\le 3c_u |M_n|^{1/d} s$ a.s., this implies that 
$$\lim_{n\to \infty} \frac{N(V_{Z(s)})}{s} = \lim_{n\to \infty} \frac{N(V_{Z(s)})}{Z(s)} \cdot   \frac{Z(s)}{s}  =0 \quad \mbox{a.s.}$$
Hence, for all $\eta \in (0,1)$, we have, for $s$ large enough,
$$T_\varepsilon (0,su) \leq  T_\varepsilon (\pi_s) \leq s \left( 1 + \eta+ \frac{N(V_{Z(s)})}{s} - \varepsilon^{1/d} c_u | M_\eta(u) |^{1/d}\right) \,.$$
We deduce
$$ \mu_\varepsilon (u) = \lim_{s\rightarrow \infty} \frac{T_\varepsilon (0,su)}{s} \leq 1 + \eta - \varepsilon^{1/d} c_u |M_\eta(u) |^{1/d} =1-\eta\left(c_u\left(\varepsilon\bar h_u(\eta)\right)^{1/d}-1\right)  \,.$$
Let us define $c_1:=\inf\{c_u, u\in B_N(1)\}$ and note, in view of Lemma \ref{lem:CDV}, that $c_1>0$. We get, for all $u\in B_N(1)$,
$$ \mu_\varepsilon (u) \le 1-\eta\left(c_1\left(\varepsilon\bar h_u(\eta)\right)^{1/d}-1\right) \,. $$
This proves the first part of Proposition \ref{t:thm8}. Taking
$$ \eta = \bar{g}_u \left( \left( \frac{2}{c_1} \right)^d \varepsilon^{-1} \right) \,, $$
(notice that $\eta < 1$ for $\varepsilon$ small enough), we obtain that $c_1\left(\varepsilon\bar h_u(\eta)\right)^{1/d}=2$, thus
$$ \mu_\varepsilon (u) \leq 1-\eta \,. $$
\end{proof}

Let us note that this implies a lower bound for the number of points taken by the geodesic, that we state in the following Corollary.
\begin{corollary}
\label{c:lowersharp}
Let $C_1 = C_1 (d,N)$ be the constant given by Proposition \ref{t:thm8}. For all $(u,H)$ satisfying \eqref{e:(u,H)}, for $\varepsilon$ small enough (depending on $d,N,u$ and $H$), we have, for any $\delta>0$, a.s. for large $s$, for any geodesic $\gamma_\varepsilon(su)$ from $0$ to $su$ for the time $T_\varepsilon(0,su)$,
\begin{equation}\label{eq:lowerboundgeodesique}
\varepsilon^{1/d}\sharp \gamma_\varepsilon(su)\ge (1-\delta) \bar{g}_u(C_1 \varepsilon^{-1})s \,.
\end{equation}
\end{corollary}

\begin{proof}
Indeed, if $\gamma_\varepsilon(su)$ denotes a geodesic from $0$ to $su$, we have
$$\varepsilon^{1/d} \sharp \gamma_\varepsilon(su)=N(\gamma_\varepsilon(su))-T_\varepsilon(0,su)\ge s \left( 1-\frac{T_\varepsilon(0,su)}{s} \right) \,. $$
Using  that $1-\frac{T_\varepsilon(0,su)}{s}$ converges to $1-\mu_\varepsilon(u)$ and Proposition \ref{t:thm8}, we get that for any $\varepsilon$ small enough, for any $\delta>0$, a.s. for $s$ large enough,
\begin{equation*}
\varepsilon^{1/d}\sharp \gamma_\varepsilon(su)\ge (1- \delta) \bar{g}_u(C_1 \varepsilon^{-1})s \,.
\end{equation*}
\end{proof}


\subsection{Proof of the upper bound on $1-\mu_\varepsilon(u)$}
\label{s:chernov}

We want now to find a lower bound for $\mu_\varepsilon(u)$, as stated in Theorem \ref{theo:mu}. The idea of the proof as been presented in Section \ref{s:sketch}. It relies on a bootstrap argument, that involves a control on the length $N(\gamma_\varepsilon(su))$ and the number of rewards $ \sharp \gamma_\varepsilon(su)$ of a geodesic  $ \gamma_\varepsilon(su)$. The key property used in the proof is Proposition \ref{prop:Chernov} below, that states that a path from $0$ to $su$ with small $N$-length cannot collect too much rewards. In the course of the proof, we will also prove the upper bounds on $N(\gamma_\varepsilon(su))$ and $ \sharp \gamma_\varepsilon(su)$ stated in Point (ii) of Theorem \ref{theo:mu}. We conclude this section by noticing that, at least under some added hypothesis, we can recover a lower bound on $N(\gamma_\varepsilon(su))$ (see Proposition \ref{prop:minN}) by a final last use of Proposition \ref{prop:Chernov} and the lower bound on $\sharp \gamma_\varepsilon(su)$ proved in Section \ref{s:glouton} (see Corollary \ref{c:lowersharp}).

We start by proving the following proposition.
\begin{prop} 
\label{prop:Chernov} Let us consider the event
$$\mathcal{E}(\eta,s,u,c):=\{\exists \pi \in \Pi (0,su),   N (\pi)\le (1+\eta )s, \sharp \pi \ge c|K_\eta(u)|^{1/d}s\}.$$
Then there exists some constants $c,c'>0$ depending only on $d$ and $N$ such that for any $(u,H)$ satisfying \eqref{e:(u,H)}, there exists $\bar{\eta}>0$ (depending on $d, N ,u$ and $H$) such that, for $\eta<\bar{\eta}$, for all $s>0$, we have
\begin{equation}\label{eq:propChernov}
 \mathbb{P} [\mathcal E (\eta,  s,u,c) ] \leq 2\exp [-s c'|K_\eta(u)|^{1/d} ].
\end{equation}
In particular, for all $\eta \in (0,\bar{\eta})$ there exists a.s. $s_\eta$ such that, for $s\geq s_\eta$, every path $\pi \in \Pi (0,su)$ such that $N(\pi) \leq (1+ \eta) s$ satisfies $\sharp \pi \leq c|K_\eta(u)|^{1/d} s$.
\end{prop}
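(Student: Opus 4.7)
The strategy is a Chernoff-type bound using the moment generating function $I(\eta)$ introduced in Lemma \ref{l:I+}. The starting observation is that for a polygonal path $\pi = (0 = x_0, x_1, \dots, x_k, x_{k+1} = su)$ with edges $e_i := x_{i+1} - x_i$, the identity $\sum_{i=0}^k e_i \cdot u^\star = su \cdot u^\star = s$ yields
\[
N(\pi) - (1-\eta)s = \sum_{i=0}^k \phi(e_i), \qquad \phi(e) := N(e) - (1-\eta)\, e\cdot u^\star.
\]
Thus $N(\pi) \le (1+\eta)s$ if and only if $\sum_i \phi(e_i) \le 2\eta s$. Because $H$ is a supporting hyperplane of $B_N(1)$ at $u$, we have $v \cdot u^\star \le N(v)$ for all $v \in \mathbb{R}^d$, and so $\phi(e) \ge \eta N(e) \ge 0$; in particular $g(e) := e^{-\beta \phi(e)} \le 1$ for every $\beta > 0$.

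The main step is to estimate, for a fixed integer $k$, the expected number $\mathbb{E}[N_k]$ of polygonal paths $\pi \in \Pi(0,su)$ with $\sharp\pi = k$ and $N(\pi) \le (1+\eta)s$. By the Campbell--Mecke formula and the Chernoff trick $\mathbf{1}\{Y \le a\} \le e^{\beta(a-Y)}$,
\[
\mathbb{E}[N_k] \le e^{2\beta \eta s}\int_{(\mathbb{R}^d)^k} \prod_{i=0}^k e^{-\beta \phi(e_i)}\, dx_1\cdots dx_k.
\]
Performing the triangular change of variables $(x_1,\dots,x_k) \to (e_0,\dots,e_{k-1})$ with $e_k = su - \sum_{i<k} e_i$ identifies the integral as the $(k+1)$-fold convolution $g^{*(k+1)}(su)$. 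Since $\|g\|_\infty \le 1$, one has $g^{*(k+1)}(su) \le \|g\|_1^k$. A straightforward scaling $y = \beta e$ gives $\|g\|_1 = \beta^{-d} I(\eta)$, so
\[
\mathbb{E}[N_k] \le e^{2\beta \eta s}\,\bigl(\beta^{-d} I(\eta)\bigr)^k.
\]

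It remains to sum over $k \ge c|K_\eta(u)|^{1/d} s$ and tune $\beta$. Set $\beta = \theta\, I(\eta)^{1/d}$ for a fixed $\theta > 1$, so that $\beta^{-d} I(\eta) = \theta^{-d} < 1$ and the geometric series converges with total mass at most $2$ (when, say, $\theta = 2$). Lemma \ref{l:I+} gives $I(\eta) \le 2c_2\, \eta^{-d}|K_\eta(u)|$ for $\eta$ small enough, hence $\beta \eta \le \theta (2c_2)^{1/d} |K_\eta(u)|^{1/d}$. Plugging this into the bound,
\[
\mathbb{P}[\mathcal{E}(\eta,s,u,c)] \le 2\exp\!\Bigl(s\,|K_\eta(u)|^{1/d}\bigl[\,2\theta(2c_2)^{1/d} - d c \log\theta\,\bigr]\Bigr),
\]
and choosing $c$ large enough (depending on $d, N$ via $c_2$) makes the bracket strictly negative, yielding \eqref{eq:propChernov} with some $c' > 0$.

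The a.s. statement follows from the tail bound by a routine Borel--Cantelli argument: apply \eqref{eq:propChernov} along $s = n \in \mathbb{N}$, sum to conclude that $\mathcal{E}(\eta,n,u,c)$ fails for all $n$ large, and extend to real $s$ by comparison between $s$ and $\lfloor s\rfloor$, absorbing the $O(1)$ corrections into a marginal enlargement of $\eta$. The main conceptual obstacle is the choice of the Chernoff tilt $\phi$: it must be non-negative (needed for the $\|g\|_\infty \le 1$ estimate and to make the sum of tilts a lower bound for $N(\pi)-(1-\eta)s$), its Laplace transform must be computable up to a scaling in $\beta$, and its scale must produce the precise exponent $|K_\eta(u)|^{1/d}$ when optimized. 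All three requirements are met precisely because $u^\star$ is the dual of the supporting hyperplane $H$ and because of the equivalence $I(\eta) \asymp \eta^{-d}|K_\eta(u)|$ from Lemma \ref{l:I+}.
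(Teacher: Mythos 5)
Your main argument is correct and takes a genuinely different route from the paper. The paper first proves a separate lemma (Lemma \ref{lemmeChernov}) bounding the event for paths from $0$ to the hyperplane $su+H$ subject to the constraint $x\cdot u^\star\le s$; that constraint is what lets them drop the absolute value in $|s-x_q\cdot u^\star|$ and run the exponential tilt with two parameters $\alpha,\beta$. They then prove Proposition \ref{prop:Chernov} by splitting an arbitrary path to $su$ into the piece $\pi_1$ up to its first hit of $su+H$ (handled by the lemma) and the tail $\pi_2$ (handled by a separate rough bound, since $N(\pi_1)\ge s$ forces $N(\pi_2)\le\eta s$). You instead apply the Chernoff bound directly to paths $0\to su$, using the exact identity $\sum_i e_i\cdot u^\star = s$ to write $N(\pi)-(1-\eta)s=\sum_i\phi(e_i)$ with a single tilt, together with the observation that the supporting-hyperplane inequality $e\cdot u^\star\le N(e)$ forces $\phi\ge\eta N(e)\ge 0$ (hence $\|g\|_\infty\le 1$, so the closed-loop constraint costs nothing beyond $\|g\|_1^k$). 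This avoids both the hyperplane decomposition and the auxiliary parameter $\beta$, and is cleaner; what the paper's route buys is the standalone lemma for paths to $su+H$, which they reuse in the Remark at the end of Section \ref{s:chernov} to study $\mu_\varepsilon(u,H)$. One genuine imprecision in your write-up: the Borel--Cantelli extension from integer $s$ to real $s$ should not be done by ``absorbing $O(1)$ corrections into a marginal enlargement of $\eta$'', since the statement is for each fixed $\eta<\bar\eta$; the paper's device is to sample $s$ along the grid $\lambda_n=n/(c|K_\eta(u)|^{1/d})$ so that the threshold $c|K_\eta(u)|^{1/d}\lambda_n$ is exactly $n$, and then to extend a path $0\to su$ to $0\to\lambda_{n+1}u$ by a straight segment (which keeps $N(\pi)\le(1+\eta)\lambda_{n+1}$). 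This is a small fix, not a conceptual gap.
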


To prove this proposition, we will need the following lemma which roughly states the same thing except that we consider now a path from the origin $0$ to the hyperplane $su+H$. We denote by $\Pi (0,su +H)$ the set of all polygonal paths $\pi \in \Pi (0,y)$ for some $y\in su+H$. We recall that the definition of $I(\eta)$ for $\eta >0$ is given in Lemma \ref{l:I+}.

\begin{lemma}\label{lemmeChernov}For $A>0$ and $(u,H)$ satisfying \eqref{e:(u,H)}, define
$$\mathcal{F}(\eta,s,u,A,H):=\{\exists \pi \in \Pi (0,su +H) , \forall x\in \pi, x\cdot u^\star \le s,  N (\pi)\le (1+\eta )s, \sharp \pi \ge As\}.$$
Then, for all $A>0$, for all $(u,H)$ satisfying \eqref{e:(u,H)}, for all $s>0$ and $\eta >0$, 
$$ \mathbb{P} [\mathcal F (\eta,s,u, A, H) ] \leq \exp [-s (A \log 2 - 2^{1 + 1/d} I (\eta)^{1/d} \eta) ] \,.$$
\end{lemma}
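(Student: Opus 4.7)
The plan is to implement a Chernoff-type bound at the level of the Poisson intensity. I would first \emph{linearize} the length constraint $N(\pi)\le(1+\eta)s$ using the supporting hyperplane, then apply the exponential Markov inequality together with the Mecke/Campbell formula for the Poisson process $\Xi$, and finally union-bound over the number $n$ of interior vertices of the path.

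\emph{Linearization.} Since $H$ is a supporting hyperplane of $B_N(1)$ at $u$, the linear functional $z\mapsto z\cdot u^\star$ has dual $N$-norm at most one, that is $N(z)\ge z\cdot u^\star$ for every $z\in\R^d$ (and, by symmetry of $B_N(1)$, $N(z)\ge|z\cdot u^\star|$). Writing an admissible path as $\pi=(0=x_0,x_1,\dots,x_n,y)$ with $x_1,\dots,x_n\in\Xi$ distinct and $y\in su+H$, this yields $N(y-x_n)\ge s-x_n\cdot u^\star$, so the assumption $N(\pi)\le(1+\eta)s$ forces $\sum_{i=1}^n N(x_i-x_{i-1})\le \eta s+x_n\cdot u^\star$. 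Combining this with the telescoping identity $\sum_{i=1}^n(x_i-x_{i-1})\cdot u^\star=x_n\cdot u^\star$ and with the hypothesis $x_n\cdot u^\star\le s$, I obtain
$$\sum_{i=1}^n \phi(x_i-x_{i-1})\le 2\eta s,\qquad\text{where}\qquad\phi(z):=N(z)-(1-\eta)\,z\cdot u^\star\ge 0.$$

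\emph{Chernoff plus Mecke.} For a fixed $n\ge\lceil As\rceil$ and $t>0$, the exponential Markov inequality gives the pointwise bound $\mathbf{1}_{\sum\phi(z_i)\le 2\eta s}\le e^{2t\eta s}\prod_i e^{-t\phi(x_i-x_{i-1})}$. Bounding the probability that such an $n$-tuple exists by the expected number of such tuples, invoking the Mecke formula for the Poisson process of intensity $1$, and finally using the translation change of variables $z_i=x_i-x_{i-1}$ followed by the rescaling $w=tz$ (whose Jacobian is $t^{-d}$), I obtain an upper bound of the form $e^{2t\eta s}\bigl(I(\eta)/t^d\bigr)^n$.

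\emph{Optimisation and summation.} Choosing $t=(2I(\eta))^{1/d}$, so that $I(\eta)/t^d=1/2$ and $2t\eta=2^{1+1/d}\,\eta\,I(\eta)^{1/d}$, and summing the geometric series over $n\ge\lceil As\rceil$, I conclude
$$\mathbb{P}(\mathcal{E}(\eta,s,u,A,H))\le \exp\!\bigl[-s\bigl(A\log 2-2^{1+1/d}\,\eta\,I(\eta)^{1/d}\bigr)\bigr],$$
after absorbing the harmless $(1-1/2)^{-1}$ prefactor coming from the geometric sum. The main obstacle is the linearization step: the weight $(1-\eta)$ in $\phi$ (rather than $1$) is what simultaneously forces $\phi\ge 0$ on all of $\R^d$, keeps $I(\eta)<\infty$, and produces a constraint on $\sum\phi(z_i)$ of order $\eta s$ — exactly the combination required for the Chernoff trick to produce a bound that decays exponentially in $s$ once $\eta$ is small enough that $2^{1+1/d}\eta I(\eta)^{1/d}<A\log 2$.
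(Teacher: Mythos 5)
Your proposal follows essentially the same route as the paper: linearize via the supporting hyperplane ($N(z)\ge z\cdot u^\star$), apply a Chernoff/Campbell bound over ordered tuples of Poisson points, and use the degree-one homogeneity of $\phi(z)=N(z)-(1-\eta)z\cdot u^\star$ to express the integral as $I(\eta)/t^d$. The paper's two parameters $(\alpha,\beta)$ with the choice $\beta=\alpha\eta$ collapse exactly to your single parameter $t$, so the algebra is identical. The one genuine divergence is that the paper fixes the number of interior vertices at $q=\lceil As\rceil$ (implicitly: any admissible path with more vertices can be truncated at its $q$\textsuperscript{th} vertex and continued straight to $su+H$, which only decreases the length since $N(w)\ge w\cdot u^\star$), and thereby obtains exactly the stated bound; your geometric sum over $n\ge q$ produces an extra prefactor $2$, which cannot be ``absorbed'' into $\exp[-s(\cdots)]$ but is of course harmless for the proposition this lemma feeds into (which already carries a factor $2$). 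A very minor remark: $\phi\ge0$ holds only for $\eta\le1$ (the full strength $\phi(z)\ge\eta N(z)$ also requires $\eta\le1$), but you do not actually use nonnegativity of $\phi$ in the Chernoff step, only the deterministic bound $\sum\phi\le2\eta s$ together with $\mathbf{1}_{x\le a}\le e^{t(a-x)}$; so this does not affect the validity.
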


We first explain how Lemma \ref{lemmeChernov} implies the proposition.

\begin{proof}[Proof of Proposition \ref{prop:Chernov} using Lemma \ref{lemmeChernov}]
A path $\pi \in \Pi (0,su )$ can be decomposed into two paths $\pi_1$ and $\pi_2$ where $\pi_1$ is the restriction of the path $\pi$ until it intersects the hyperplane $su+H$ and $\pi_2$ is the intersection of the path $\pi$ afterwards. Note that $N(\pi)=N(\pi_1)+N(\pi_2)$ and moreover $N(\pi_1)\ge s$, since $\pi_1$ goes from $0$ to some $z\in su+H$ thus $N(\pi_1) \geq N (z) \geq N (su) = s$. If $\pi$ satisfies the property appearing in the definition of the event $\mathcal{E}(\eta,s,u,c)$, we thus get $N(\pi_2)\le \eta s$ and moreover we have either $\sharp \pi_1 \ge c|K_\eta(u)|^{1/d}s/2$ or  $\sharp \pi_2 \ge c|K_\eta(u)|^{1/d}s/2$. Hence, if we define  
$$\mathcal{G}(\eta,s,u,c):=\{\exists \pi \in \Pi (su,\star),   N(\pi)\le \eta s, \sharp \pi \ge \frac{c}{2}|K_\eta(u)|^{1/d}s\} \,,$$
we get 
$$\mathcal{E}(\eta,s,u,c)\subset \mathcal{F} \left(\eta,s,u,\frac{c}{2}|K_\eta(u)|^{1/d},H\right)\cup  \mathcal{G}(\eta,s,u,c) \,.$$
Using \eqref{eq:I_1} in Lemma \ref{l:I+}, we see that we can choose $c$  such that for $\eta$ small enough we have 
$$\frac{c|K_\eta(u)|^{1/d}}{2} \log 2\ge \eta 5I(\eta)^{1/d}$$
so that
$$\frac{c|K_\eta(u)|^{1/d}}{2} \log 2 - 2^{1 + 1/d} I (\eta)^{1/d} \eta\ge I (\eta)^{1/d} \eta (5-4)= I (\eta)^{1/d} \eta.$$
Thus by Lemma \ref{lemmeChernov} we get
$$ \mathbb{P} \left[ \mathcal{F} \left(\eta,s,u,\frac{c}{2}|K_\eta(u)|^{1/d},H\right) \right] \leq \exp [-s  I (\eta)^{1/d} \eta) ]\le \exp [-s  c_1|K_\eta(u)|^{1/d} ]$$
for some constant $c_1$.
Moreover, let $q=\frac{c}{2}|K_\eta(u)|^{1/d}s$, then for any $\beta >0$, using the fact that $\1_{x>0}\leq e^{\beta x}$ and by Mecke Equation (see Theorem 4.4 in \cite{last_penrose_2017})
\begin{eqnarray*}
 \mathbb{P} [\mathcal{G}(\eta,s,u,c)]&=& \mathbb{P} [\exists \pi \in \Pi ( 0 , \star),   N(\pi)\le \eta s, \sharp \pi \ge q]\\
 &\le&\mathbb{P} [\exists (x_1,\ldots,x_q)\in \Xi ,\;   \sum_{i=1}^q N(x_i-x_{i-1})\le \eta s]\\
 & \le & \mathbb{E} \left[ \sum_ {(x_1,\ldots,x_q)\in \Xi} \1_{ \eta s - \sum_{i=1}^q N(x_i-x_{i-1})\ge 0} \right]\\
  & \le &\exp(\beta \eta s) \mathbb{E} \left[ \sum_ {(x_1,\ldots,x_q)\in \Xi} \exp{  \left(  - \beta \sum_{i=1}^q N(x_i-x_{i-1})\right)} \right]\\
   &=&\exp(\beta \eta s)\left(\int_{(\mathbb{R}^d)^q} \exp(-\beta \sum_{i=1}^q N(x_i-x_{i-1}) ) dx_1 \dots dx_q \right)\\
 &=&\exp(\beta \eta s)\left(\int_{\mathbb{R}^d} \exp(-\beta N(x))dx\right)^q\\
 &=&\exp(\beta \eta s)\left(\frac{1}{\beta^d}\int_{\mathbb{R}^d} \exp(- N(x))dx\right)^q \,.
\end{eqnarray*}
Chose $\beta_0:=\beta_0(d)$ such that the bracket above is equal to $1/2$.
Then
\begin{eqnarray*}
 \mathbb{P} [\mathcal{G}(\eta,s,u,c)]&\le &\exp(\beta_0 \eta s-q\log 2 )\\
 &=&\exp \left( -\eta s \left( \frac{c}{2}h_u(\eta)^{1/d}\log 2-\beta_0 \right) \right),
\end{eqnarray*}
where $h_u(\eta)=\eta^{-d}|K_\eta(u)|$. By Proposition \ref{prop:function_h}, $h_u(\eta)$ tends to infinity as $\eta$ tends to 0, we choose $\bar{\eta}:=\bar{\eta}(d,N,u,H)$ such that $\beta_0\le\frac{c}{4}h_u(\eta)^{1/d}(2\log 2-1)$ for $\eta\le \bar{\eta}$. 
This yields
$$\mathbb{P} [\mathcal{G}(\eta,s,u,c)]\le \exp \left[-s  \frac{c}{4}|K_\eta(u)|^{1/d}) \right].$$
This implies that \eqref{eq:propChernov} holds with $c'=\min(c_1,c/4)$.
Moreover, if we set $\lambda_n = n /(c |K_\eta(u)|^{1/d})$,  we get
$$ \sum_{n=1}^\infty \mathbb{P} [ \mathcal{E} (\eta, \lambda_n , u,c) ] \leq 2 \sum_{n=1}^\infty   \exp \left[-\frac{c'}{c}n\right]  \,.$$
Thus, Borel-Cantelli Lemma yields that for $n$ large enough, every path $\pi \in \Pi( 0, \lambda_n u)$ such that $N(\pi) \leq (1+\eta) \lambda_n$ satisfies $\sharp \pi < c |K_\eta(u)|^{1/d} \lambda_n = n$. Then for $s\in [\lambda_n, \lambda_{n+1}[$, if $\pi$ is a path from $0$ to $ su$ such that $N(\pi) \leq (1+\eta) s$, adding to $\pi$ a straight line colinear to $u$ yields a path from $0$ to $\lambda_{n+1}u$  such that $N(\pi) \leq (1+\eta) \lambda_{n+1}$. Thus we deduce that $\sharp \pi < n+1$. Since $\lfloor c |K_\eta(u)|^{1/d} s\rfloor = n$, this implies in fact that $\sharp \pi \le c |K_\eta(u)|^{1/d}s$.
\end{proof}

We now prove Lemma \ref{lemmeChernov}.

\begin{proof}[Proof of Lemma \ref{lemmeChernov}] Let $q=\lceil As \rceil$. Let $\pi=(0,x_1,\ldots,x_q,x_{q+1})\in \Pi (0,su+H)$ where $x_1,\ldots,x_q$ are points of the $\Xi$ and $x_{q+1}-x_q$ is colinear to $u$. Set $x_0=0$. Thus
$$N(\pi)=\sum_{i=1}^{q+1} N(x_i-x_{i-1})=\sum_{i=1}^{q} N(x_i-x_{i-1})+|s-x_{q}\cdot u^\star|=\sum_{i=1}^{q} N(x_i-x_{i-1})+s-x_{q}\cdot u^\star \,.$$
Thus
$$N(\pi)\le (1+\eta )s \Leftrightarrow \sum_{i=1}^{q} N(x_i-x_{i-1})\le \eta s + x_{q}\cdot u^\star.$$
We get for any $\alpha, \beta >0$, using the fact that $\1_{x>0}\leq e^{ x}$ and by Mecke Equation (see Theorem 4.4 in \cite{last_penrose_2017}),
\begin{eqnarray*}
\mathbb{P} [\mathcal F (\eta, s,u,A,H) ] &=& \P(\exists x_1,\ldots,x_q\in \Xi, x_{q}\cdot u^\star\le s, \sum_{i=1}^{q} N(x_i-x_{i-1})\le \eta s + x_{q}\cdot u^\star) \\
&\le & \E\left(\sum_{x_1,\ldots,x_q\in \Xi} \1_{x_{q}\cdot u^\star\le s} \1_{ \sum_{i=1}^{q} N(x_i-x_{i-1})\le \eta s + x_{q}\cdot u^\star}\right) \\
&\le & \int_{(\R^d)^q} \exp\left(\beta ( s-x_{q}\cdot u^\star)+\alpha \left( \eta s - \sum_{i=1}^{q} N(x_i-x_{i-1}) + x_{q}\cdot u^\star \right) \right)dx_1\ldots dx_q \,.
\end{eqnarray*} 
Taking $\beta=\alpha \eta$, we get
\begin{align*}
\mathbb{P}& [\mathcal F (\eta, s,u,A,H) ]\\  &\le   \exp(2\alpha \eta s)\int_{(\R^d)^q}\exp\left(-\alpha \left( \sum_{i=1}^{q} N(x_i-x_{i-1})-(1-\eta)\sum_{i=1}^{q}(x_i-x_{i-1})\cdot u^\star \right)\right)dx_1\ldots dx_q\\
&= \exp(2\alpha \eta s)\left(\int_{\R^d}\exp\left(-\alpha(  N(x)-(1-\eta)x\cdot u^\star)\right)dx\right)^q\\
&= \exp(2\alpha \eta s)\left(\frac{I(\eta)}{\alpha^d}\right)^q.
\end{align*} 
We conclude taking $\alpha$ such that $\alpha^d=2I(\eta)$.

\end{proof}

We can now prove the upper bound on the number of points and the length of a geodesic given in (ii) of Theorem \ref{theo:mu}, \emph{i.e.}, we can prove that there exists a constant $C_2$ (depending only on $d$ and $N$) such that, for any $(u,H)$ satisfying \eqref{e:(u,H)}, for $\varepsilon$ small enough (depending on $d,N,u$ and $H$), we have for any $\delta>0$,  a.s., for $s$ large enough, for any geodesic $\gamma_\varepsilon (su)\in \Pi (0,su)$ from $0$ to $su$ for the time $T_{\varepsilon} (0,su)$,
\begin{equation}\label{eq:upperboundgeodesique}
N(\gamma_\varepsilon (su))\le s(1+(1+\delta) g_u(C_2\varepsilon^{-1})) \textrm{ and } \sharp \gamma_\varepsilon (su) \leq (1+\delta) g_u(C_2\varepsilon^{-1}) \varepsilon^{-1/d} s. 
\end{equation}

\begin{proof}[Proof of \eqref{eq:upperboundgeodesique}]
We begin by proving that there exists some $C>0$ such that, for $\varepsilon$ small enough, we have a.s. for $s$ large enough
\begin{equation}
\label{e:29}
N(\gamma_\varepsilon (su)) \leq s [1 + C \varepsilon^{1/d}]\,.
\end{equation}
We use tools that come from the study of the greedy paths and greedy lattice animals. Let 
$$ G(s) := \sup \left\{ \frac{\sharp \pi}{N(\pi)} \,:\, \pi \in \Pi( 0 , \star) \,,\, \pi  \not\subset B_N(s) \right\} \leq \sup \left\{ \frac{\sharp \pi}{N(\pi)} \,:\, \pi \in \Pi (0 , \star) \right\} := G\,.  $$
Let $G(\infty)$ be the increasing limit of $G(s)$. We have the follonwing properties : $G(\infty) \leq G$, $G(\infty)$ is constant a.s. and
$$ G(\infty) = \mathbb{E} [G(\infty)] \leq \mathbb{E} [G] \leq c \int_0^\infty \delta_1 ([r,+\infty[)^{1/d} dr := C/4 \,. $$
This is a consequence of $(11)$ and Lemma 2.1 in \cite{GM08} (which is the analog in a continuous setting of a result by Martin \cite{Martin02} in a discrete setting), and these results have already been useful to study continuous first-passage percolation (see \cite{GT17} Theorem 3.1 or \cite{GT22} Theorem 13 and Corollary 14).
Thus, a.s. for large enough $s$, for all $\pi \in \Pi ( 0, \star)$ such that $\pi \not\subset B_N(s/2) $ we have 
\begin{equation}
\label{e:ajoutcrude}
\sharp \pi \leq (C/2) N(\pi)\,.
\end{equation}
In particular, this holds for $\gamma_\varepsilon (su)$, a geodesic from $0$ to $su$. Using that $T(\gamma_\varepsilon (su)) \leq s$, we get
$$ s \geq T(\gamma_\varepsilon (su)) = N (\gamma_\varepsilon (su)) \left[1 - \varepsilon^{1/d} \frac{\sharp \gamma_\varepsilon (su)}{N(\gamma_\varepsilon (su))} \right] \geq N (\gamma_\varepsilon (su)) [1 - C \varepsilon^{1/d}/2] $$
which gives $N(\gamma_\varepsilon (su)) \leq s [1 - C \varepsilon^{1/d}/2]^{-1} \leq s [1+C \varepsilon^{1/d}]$ for $\varepsilon$ small enough.

Let's now consider $c>0$ such that the conclusion of Proposition \ref{prop:Chernov} holds, \emph{i.e.}, for any $u\in B_N(1)$, for $\eta$ small enough we have a.s. that for $s$ large enough, any path $\pi:0\mapsto su$ such that $N(\pi)\le (1+\eta)s$ go trough less than $c|K_\eta(u)|^{1/d}$ points of $\Xi$.
Now define the sequence $(\eta_n)_{n\ge 0}$ by
$$\eta_0 = C \varepsilon^{1/d} \mbox{ and } \eta_{n+1} = c|K_{\eta_n}(u)|^{1/d}\varepsilon^{1/d}.$$
The function $\eta\mapsto |K_\eta(u)|$ is continuous and increases with $\eta$ so the sequence 
 $(\eta_n)$ is monotonic. Assume that $\varepsilon$ is small enough such that $C\varepsilon^{1/d}\le 1$ and $c|K_{1}(u)|^{1/d}\varepsilon^{1/d}\le 1$. We get then by induction that the the sequence 
 $(\eta_n)$ remains in $[0,1]$ so it converges to a solution of the equation
 $$ \eta = c|K_{\eta}(u)|^{1/d}\varepsilon^{1/d}\,,$$
 \emph{i.e}, either to $0$ or to $\eta_\infty : =g_u((c^d\varepsilon)^{-1})$. Note that in both cases, if we fix some $\delta>0$, we have, for $n$ large enough, $\eta_n\le (1+\delta)\eta_\infty$.

Assume moreover that $\varepsilon$ is small enough such that Proposition \ref{prop:Chernov} holds for $\eta_0$, \emph{i.e.}, $\eta_0\le \bar{\eta}$. 
Let us prove by induction on $n$ that for all $n\geq 1$, there exists a.s. (a random) $s_n$ such that, for $s>s_n$, $\sharp \gamma_\varepsilon (su) \leq \eta_n \varepsilon^{-1/d} s$. We have seen that for $s$ large enough, $N(\gamma_\varepsilon (su)) \leq s [1 + \eta_0]$. Using Proposition \ref{prop:Chernov}, we get that for $s$ large enough $\sharp \gamma_\varepsilon (su) \leq c|K_{\eta_0}(u)|^{1/d} s = \eta_1 \varepsilon^{-1/d} s$. Note that if the sequence $(\eta_n)$ is non-decreasing, this directly implies that $\sharp \gamma_\varepsilon (su) \leq \eta_n \varepsilon^{-1/d} s$. Thus we can assume that $(\eta_n)$ is non-increasing.

Assume now that we proved that for $s$ large enough $\sharp \gamma_\varepsilon (su) \leq \eta_n \varepsilon^{-1/d} s$. Since $T(\gamma_\varepsilon (su)) \leq s$, we get that $N(\gamma_\varepsilon (su)) = T(\gamma_\varepsilon (su)) + \varepsilon^{1/d} \sharp \gamma_\varepsilon (su)  \leq s (1 + \eta_n)$. Since $\eta_n\le \eta_0\le \bar{\eta}$, we can again apply Proposition \ref{prop:Chernov} which yields that for $s$ large enough $\sharp \gamma_\varepsilon (su) \leq c|K_{\eta_n}(u)|^{1/d}  s = \eta_{n+1} \varepsilon^{-1/d} s $.
Hence, for all $n\geq0$, we have a.s. for $s$ large enough
$$ \sharp \gamma_\varepsilon (su) \leq \eta_n \varepsilon^{-1/d} s$$
and
$$ N(\gamma_\varepsilon (su)) \leq s (1 + \eta_n)\,.  $$
We conclude using that for  $n$ large enough, $\eta_n \leq (1+\delta) \eta_\infty$.
\end{proof}

\begin{proof}[End of the proof of Theorem \ref{theo:mu}]
Using \eqref{eq:upperboundgeodesique} and the inequality 
$ T_\varepsilon (0,su) \geq s - \varepsilon^{1/d} \sharp \gamma_\varepsilon (su)  $
we get that, for $\varepsilon$ small enough, we have for any $\delta>0$, a.s. for $s$ large enough,
$$ T_\varepsilon (0,su) \geq (1 - (1+\delta) g_u(c\varepsilon^{-1}) )s \,. $$
Since  $\mu_\varepsilon (u) = \lim_{s\rightarrow \infty} T_\varepsilon (0,su) / s$, we get for $\varepsilon$ small enough, for any $\delta>0$, 
$$ 1-\mu_\varepsilon (u) \leq (1+\delta)  g_u(c\varepsilon^{-1}). $$
Letting $\delta$ tend to $0$, we get the upper bound in (i) of Theorem \ref{theo:mu}. The lower bound in (i) of Theorem \ref{theo:mu} have been established in Proposition \ref{t:thm8}.
Moreover we just proved the upper bounds in (ii) of Theorem \ref{theo:mu} and the lower bound on $\sharp \gamma_\varepsilon(su)$ have been established in Corollary \ref{c:lowersharp}. 
\end{proof}

At this stage, we could hope to get a lower bound on $N(\gamma_\varepsilon (su))$, using Proposition \ref{prop:Chernov} and the lower bound on $\sharp \gamma_\varepsilon (su)$ obtained in Theorem \ref{theo:mu} $(ii)$. However, in a general setting, we cannot control the difference between $g_u$ and $\bar g_u$, thus what we get is not very satisfying. Nevertheless, we can state the following result, that will be useful for specific choices of the norm $N$. Consider the case where $| K_0 (u) | =0$. This hypothesis corresponds to the fact that $B_N (u)$ does not have a $(d-1)$-dimensional flat edge in the direction of $u$. Notice that $\hat \ell_u : \eta\mapsto |K_\eta (u)|$ is continuous and strictly increasing. Indeed $\hat \ell_u $ is obviously non-decreasing, and since $\hat \ell_u (\eta) = \eta^d h_u(\eta)$ the continuity of $\hat \ell_u$ is a consequence of the continuity of $h_u$ stated in Proposition \ref{prop:function_h}. In the course of the proof of Proposition \ref{prop:function_h}, we in fact proved that $(\hat \ell_u )^{\frac{1}{d-1}}$ is concave, see \eqref{e:ell_u}. Since $ (\hat \ell_u )^{\frac{1}{d-1}} $ is concave, non-decreasing and goes to infinity when $\eta$ goes to infinity, this function has to be increasing, thus $\hat \ell_u$ is increasing. If $| K_0 (u) | =0$, then $\hat \ell_u$ is a bijection from $[0,\infty)$ onto $[0,\infty)$. Let us denote by $\ell_u$ its inverse function. We can now state the following result.

\begin{prop}
\label{prop:minN} Let $u,H$ satisfying \eqref{e:(u,H)}.
Suppose that $| K_0 (u) | =0$, and denote by $\ell_u : [0,\infty) \rightarrow [0,\infty) $ the inverse function of $\eta\mapsto |K_\eta (u)|$. Let $c$ be the constant appearing in Proposition \ref{prop:Chernov}, and $C_1$ the constant appearing in Theorem \ref{theo:mu}. For $\varepsilon$ small enough (depending on $d,N,u$ and $H$), we have, for any $\delta \in (0,1]$, a.s. for large $s$, for any  geodesic $\gamma_\varepsilon (su)$ from $0$ to $su$ for the time $T_{\varepsilon} (0, su)$,
$$
N(\gamma_\varepsilon (su))-s\ge  \ell_u \left( (1-\delta)^d c^{-d} \varepsilon^{-1}\bar{g}_u(C_1\varepsilon^{-1})^d\right) s \,.
$$
\end{prop}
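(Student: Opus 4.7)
The plan is to deduce the lower bound on $N(\gamma_\varepsilon(su))$ from the already established lower bound on $\sharp \gamma_\varepsilon(su)$ (Corollary \ref{c:lowersharp}) by using Proposition \ref{prop:Chernov} in its contrapositive form. The hypothesis $|K_0(u)|=0$ enters precisely to guarantee that $\hat\ell_u:\eta\mapsto|K_\eta(u)|$ is a continuous increasing bijection from $[0,\infty)$ onto $[0,\infty)$, so that $\ell_u$ is well defined and continuous at $0$.

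Introduce
\[
\eta^{\star}\;:=\;\ell_u\!\bigl((1-\delta)^d c^{-d}\,\varepsilon^{-1}\,\bar{g}_u(C_1\varepsilon^{-1})^d\bigr),
\]
which, by the very definition of $\ell_u$, satisfies $c\,|K_{\eta^{\star}}(u)|^{1/d}=(1-\delta)\,\varepsilon^{-1/d}\,\bar{g}_u(C_1\varepsilon^{-1})$. I first need to check that $\eta^{\star}<\bar\eta$ for $\varepsilon$ small enough, so that Proposition \ref{prop:Chernov} can actually be applied with parameter $\eta^{\star}$. Using the identity $\bar{h}_u(\bar{g}_u(y))=y$, the argument of $\ell_u$ above rewrites as $C_1^{-1}|M_{\bar{g}_u(C_1\varepsilon^{-1})}(u)|$, which goes to $|M_0(u)|=0$ as $\varepsilon\to 0$ (since $M_\eta(u)\subset K_\eta(u)$ and $|K_0(u)|=0$). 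By continuity of $\ell_u$ at $0$, this forces $\eta^{\star}\to 0$ with $\varepsilon$, so $\eta^{\star}<\bar\eta$ for $\varepsilon$ sufficiently small.

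Now fix any $\delta'\in(0,\delta)$. By Corollary \ref{c:lowersharp}, almost surely for $s$ large,
\[
\sharp\gamma_\varepsilon(su)\;\ge\;(1-\delta')\,\varepsilon^{-1/d}\,\bar{g}_u(C_1\varepsilon^{-1})\,s.
\]
On the other hand, Proposition \ref{prop:Chernov} applied with $\eta=\eta^{\star}$ yields, almost surely for $s$ large, the implication
\[
N(\gamma_\varepsilon(su))\le(1+\eta^{\star})s\;\Longrightarrow\;\sharp\gamma_\varepsilon(su)\le c\,|K_{\eta^{\star}}(u)|^{1/d}s=(1-\delta)\,\varepsilon^{-1/d}\,\bar{g}_u(C_1\varepsilon^{-1})\,s.
\]
Since $(1-\delta')>(1-\delta)$, the two displayed bounds on $\sharp\gamma_\varepsilon(su)$ are incompatible, so the hypothesis of the implication must fail, i.e., $N(\gamma_\varepsilon(su))>(1+\eta^{\star})s$ almost surely for $s$ large. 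Rearranging and recalling the definition of $\eta^{\star}$ gives exactly the stated inequality.

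The only non-routine step is the verification that $\eta^{\star}$ lies in the range of validity of Proposition \ref{prop:Chernov}, and it is precisely here that the assumption $|K_0(u)|=0$ is used (otherwise $\eta^{\star}$ need not tend to $0$ with $\varepsilon$, and the method breaks down, which matches the warning given just before the statement). The rest is a direct contradiction argument between the two opposite controls on $\sharp\gamma_\varepsilon(su)$ furnished by Corollary \ref{c:lowersharp} and Proposition \ref{prop:Chernov}.
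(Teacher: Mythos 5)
Your proposal is correct and follows essentially the same route as the paper: define $\eta^{\star}$ so that $c|K_{\eta^{\star}}(u)|^{1/d}$ matches (up to the $(1-\delta)$ factor) the lower bound on $\varepsilon^{-1/d}\sharp\gamma_\varepsilon(su)$ from Corollary \ref{c:lowersharp}, check that $\eta^{\star}$ is small enough to invoke Proposition \ref{prop:Chernov}, and conclude by contradiction. The only cosmetic difference is in the verification that $\eta^{\star}\to 0$: the paper argues that $|M_0(u)|=0$ forces $\bar g_u(x)=o(x^{-1/d})$ and hence the argument of $\ell_u$ tends to $\ell_u(0)=0$, whereas you rewrite $\varepsilon^{-1}\bar g_u(C_1\varepsilon^{-1})^d$ via $\bar h_u\circ\bar g_u=\mathrm{id}$ as $C_1^{-1}|M_{\bar g_u(C_1\varepsilon^{-1})}(u)|$ (you silently drop the bounded factor $(1-\delta)^d c^{-d}$, which is harmless and in fact gives the uniformity in $\delta$ for free); both are valid and lead to the same conclusion.
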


\begin{proof}[Proof of Proposition \ref{prop:minN}]
Let $c$ be the constant appearing in Proposition \ref{prop:Chernov}, and $C_1$ the constant appearing in Theorem \ref{theo:mu}.  
From the lower bound on $\sharp \gamma_\varepsilon (su)$ obtained in Theorem \ref{theo:mu} $(ii)$, we know that for $\varepsilon$ small enough, for any $\delta\in (0,1]$, we have a.s. for large $s$,
\begin{equation}
\label{e:a}
 \sharp \gamma_\varepsilon (su) \geq \left(1-\frac{\delta}{2} \right) \varepsilon ^{-1/d} \bar g _u (C_1 \varepsilon^{-1} )s \,.
 \end{equation}
Define $\eta :=  \ell_u \left( (1-\delta)^d c^{-d} \varepsilon^{-1}\bar{g}_u(C_1\varepsilon^{-1})^d\right) $.
Since we assume that $|M_0(u)|\le |K_0(u)|=0$, we get that $\bar{h}_u(\eta)=\eta^{-d}|M_\eta(u)|=o(\eta^{-d})$ as $\eta$  tends to 0. So its inverse function $\bar{g}_u$ satisfies in turn that $\bar{g}_u(x)=o(x^{-1/d})$ as $x$  tends to infinity. Using that $\ell_u(0)=0$ and $\eta\le \ell_u \left(  c^{-d} \varepsilon^{-1}\bar{g}_u(C_1\varepsilon^{-1})^d\right)$,  we get that $\eta$ tends to 0 as $\varepsilon$ tends to $0$ (uniformly with respect to $\delta \in (0,1])$.
Moreover, we have $ |K_\eta (u)| =  (1-\delta)^d c^{-d} \varepsilon^{-1}\bar{g}_u(C_1\varepsilon^{-1})^d$ by definition of $\ell_u$. This leads to 
\begin{equation}
\label{e:b}
c  |K_\eta (u)|^{1/d} = (1-\delta) \varepsilon ^{-1/d} \bar g _u (C_1 \varepsilon^{-1} )\,.
\end{equation}
By Proposition \ref{prop:Chernov}, we know that for $\varepsilon$ small enough, so that $\eta < \bar \eta$, a.s., for $s$ large enough, every path $\pi \in \Pi ( 0, su)$ such that $N(\pi)\leq (1+\eta) s$ satisfies $\sharp \pi \leq c |K_\eta (u)|^{1/d} s$. From \eqref{e:a} and \eqref{e:b}, we get that a.s., for $s$ large enough,
$$ N (\gamma_\varepsilon (su)) > (1+\eta) s $$
which ends the proof of Proposition \ref{prop:minN}.
\end{proof}

\begin{remark} 
For $(u,H)$ satisfying \eqref{e:(u,H)}, recall that $\Pi(0,su+H)$ is the set of paths from $0$ to some $y\in su+H$.  We can define the time travel from $0$ to $su+H$ by
$$T_\varepsilon(su+H):=\inf\{T_\varepsilon(\pi), \pi \in \Pi(0,su+H)\}$$ and prove, for small enough $\varepsilon$, that
$$\mu_{\varepsilon}(u,H):=\lim_{s\to \infty}\frac{T_\varepsilon(su+H)}{s}\quad \mbox{ exists a.s. and in $L^1$}.$$
Note that, in view of Lemma \ref{lemmeChernov}, one can use the same argument as above to show that $ 1- \mu_\varepsilon (u,H) \leq   g_u \left( C_2 \varepsilon^{-1} \right)$.
Moreover, to get a lower bound for this quantity, we can use a very similar greedy algorithm than the one explained in Section \ref{s:glouton} except that instead of looking at the next point of $\Xi$ in a cone of basis $M_\eta$, we look at the next point of $\Xi$ in a cone of basis $K_\eta$. The set $K_\eta$ not being symmetric, we cannot control anymore the deviation of the path with respect to the direction $u$ ($(V_n)_{n\ge 0}$ is not anymore a symmetric random walk) but the path created will still end at tome $su+y$, with $y\in H$, giving that 
  $ 1- \mu_\varepsilon (u,H) \ge   g_u \left( C_1 \varepsilon^{-1} \right).$
  Hence, we finally get 
$$g_u \left( C_1 \varepsilon^{-1} \right)\le 1- \mu_\varepsilon (u,H) \leq   g_u \left( C_2 \varepsilon^{-1} \right),$$
with the same function $g_u$ appearing  on both sides of the inequalities.
\end{remark}


\section{Study of $\tilde{\mu}_\epsilon(u)$}
\label{s:tildemu}

We now consider a general norm $N$, and study the model with balls of radii $\varepsilon^{1/d}/2$ centered at the points of $\Xi$. We can state the analog to Theorem \ref{theo:mu} in this setting, together with a comparison between the time constants in the model with rewards and the model with balls.
\begin{theorem}\label{theo:FPP} There exist some constants $C_2,C_3>0$ (depending only on $d$ and $N$, $C_2$ is the same constant as the one appearing in Theorem \ref{theo:mu}) such that for all $(u,H)$ satisfying \eqref{e:(u,H)}, the following assertions hold.
\begin{itemize}
\item[(i)]  For $\varepsilon$ small enough (depending on $d,N,u$ and $H$), we have 
$$\bar{g}_u(C_3\varepsilon^{-1})\le 1- \tilde{\mu}_{N,\varepsilon}(u)\le  1- \mu_{N,\varepsilon}(u) \le  g_u(C_2\varepsilon^{-1})\,.$$
\item[(ii)] For $\varepsilon$ small enough (depending on $d,N,u$ and $H$), for any $\delta>0$, a.s. for large $s$, there exists a geodesic $\tilde{\gamma}_{N,\varepsilon}(su) \in \hat \Pi (0,su)$ from $0$ to $su$ for the time $\tilde T_{N,\varepsilon} (0,su)$ such that
$$  N(\tilde{\gamma}_{N,\varepsilon}(su))-s\le (1+\delta) g_u(C_2\varepsilon^{-1})s.$$
\item[(iii)] Moreover, we have
$$\lim_{\varepsilon\to 0}\frac{ \mu_{N,\varepsilon}(u)-\tilde{\mu}_{N,\varepsilon}(u)}{g_u(C_2\varepsilon^{-1})}=0.$$
\end{itemize}
\end{theorem}
Let us emphasize the fact that we do not state a control on $\sharp \tilde{\gamma}_{N,\varepsilon}(su)$ in Theorem \ref{theo:FPP}: indeed $\tilde{\gamma}_{N,\varepsilon}(su) \in \hat \Pi (0,su)$ is a generalized path, that does not have to travel between points of $\Xi$, thus the quantity $\sharp \tilde{\gamma}_{N,\varepsilon}(su)$ does not have a relevant signification. However, we do obtain a control on the number of balls of the Boolean model really useful to a geodesic if we look at geodesics inside a set of paths that are easier to deal with: for more details, we refer to Proposition \ref{prop:geodes} in Section \ref{s:geodes} that states the existence of a geodesic $\check \gamma_\varepsilon (0,su)$ in a restrictive set $\check \Pi (0,su) \subset \Pi (0,su)$, and to Equation \eqref{e:ajoutfev} that gives an upper bound for $\sharp \check \gamma_\varepsilon (0,su)$ - the lower bound on $\sharp \check \gamma_\varepsilon (0,su)$ is a straigthforward consequence of the upper bound on $\tilde \mu_{N,\varepsilon}(u)$, as in the proof of Corollary \ref{c:lowersharp} in the model with rewards.

For specific choices of the norm $N$, namely when $N$ is the $p$-norm, it can be proved that the upper bound and lower bound appearing in Theorems \ref{theo:mu} $(i)$ and \ref{theo:FPP} $(i)$ are of the same order in $\varepsilon$ (see Section \ref{s:Np}). In this case, Assertion $(iii)$ in Theorem \ref{theo:FPP} implies that Theorem \ref{theo:FPP} $(i)$ is a consequence of Theorem \ref{theo:mu} $(i)$. However, it is not true in general, thus the different assertions in Theorem \ref{theo:FPP}  must be stated separately.


\subsection{Some properties of the geodesics $\tilde \gamma_\varepsilon (su)$}
\label{s:geodes}

Recall that for $x,y\in \R^d$, $\tilde T_\varepsilon(x,y)$ denotes the travel time between $x$ and $y$ in the first passage percolation model defined in Section \ref{s:def}
$$\tilde T_\varepsilon (x,y):=\inf_{\pi \in \hat\Pi(x,y)} \tilde T_\varepsilon (\pi) \,,$$
where, for a path $\pi$,
$$ \tilde T_{\varepsilon} (\pi) = N(\pi  \cap \Sigma_{ \varepsilon}^c ).$$
In this section, we prove that, for $\varepsilon$ small enough, geodesics and the time constant exist for this model and besides, we can impose some properties on the geodesics such that this model can be easily compared to the model with rewards.
We begin by proving that, as in the model with rewards, the infimum can in fact be taken only on polygonal paths.

\begin{prop}
\label{prop:geodespoly}
For every $x,y \in \mathbb{R}^d$, we have
$$ \tilde T_{\varepsilon} (x,y) \,=\, \inf_{\pi \in \hat \Pi (x,y)} \tilde T_{\varepsilon} (\pi ) \,=\,  \inf_{\pi \in  \Pi (x,y)} \tilde T_{\varepsilon} (\pi ).$$ \end{prop}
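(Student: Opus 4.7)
The inclusion $\inf_{\pi \in \Pi(x,y)} \tilde T_{\varepsilon}(\pi) \geq \inf_{\pi \in \hat\Pi(x,y)} \tilde T_{\varepsilon}(\pi)$ is immediate from $\Pi(x,y) \subset \hat\Pi(x,y)$. For the reverse inequality, the plan is to build, for each $\pi \in \hat\Pi(x,y)$, a polygonal path $\pi' \in \Pi(x,y)$ satisfying $\tilde T_{\varepsilon}(\pi') \leq \tilde T_{\varepsilon}(\pi)$, by routing through centres of the balls of the Boolean model that $[\pi]$ visits.

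First I would decompose $[\pi]$ along $\partial \Sigma_{N,\varepsilon}$: the intersection $[\pi] \cap \Sigma_{N,\varepsilon}$ is a finite union of closed arcs $A_1,\dots,A_m$, each lying in a single connected component $C_j$ of $\Sigma_{N,\varepsilon}$, and the complementary pieces are outside arcs $O_0, O_1, \dots, O_m$, with $O_0$ going from $x$ to the entry $p_1$ of $A_1$, $O_j$ going from the exit $q_j$ of $A_j$ to the entry $p_{j+1}$ of $A_{j+1}$ for $1 \le j \le m-1$, and $O_m$ going from $q_m$ to $y$ (degenerate arcs being allowed at the endpoints when $x$ or $y$ lies in $\Sigma_{N,\varepsilon}$). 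Only the outside arcs contribute, so $\tilde T_{\varepsilon}(\pi) = \sum_{j=0}^m N(O_j)$.

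For each visit $A_j$ I would choose $\alpha_j, \beta_j \in \Xi$ with $p_j \in B_N(\alpha_j, \varepsilon^{1/d}/2)$ and $q_j \in B_N(\beta_j, \varepsilon^{1/d}/2)$. Since $C_j$ is by definition a connected union of closed balls, there exists a chain of centres $\alpha_j = z_j^{(1)}, \dots, z_j^{(k_j)} = \beta_j$ in $\Xi \cap C_j$ such that consecutive balls overlap. A short geometric check (two $N$-balls of common radius $r$ whose centres satisfy $N(z - z') \le 2r$ have their connecting segment contained in $B_N(z,r) \cup B_N(z',r)$, by parametrising and using the homogeneity of $N$) then shows that each segment $[z_j^{(i)}, z_j^{(i+1)}]$ lies in $\Sigma_{N,\varepsilon}$. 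Concatenating these chains between $x$ and $y$, and deleting duplicate consecutive vertices (the vertices are distinct a.s. since $\Xi$ is simple), produces a path $\pi' \in \Pi(x,y)$.

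It remains to verify $\tilde T_{\varepsilon}(\pi') \le \tilde T_{\varepsilon}(\pi)$ piecewise. Intra-cluster segments $[z_j^{(i)}, z_j^{(i+1)}]$ lie in $\Sigma_{N,\varepsilon}$ and contribute $0$. For an inter-cluster segment $[\beta_j, \alpha_{j+1}]$, the triangle inequality gives
$$ N(\beta_j - \alpha_{j+1}) \le N(\beta_j - q_j) + N(q_j - p_{j+1}) + N(p_{j+1} - \alpha_{j+1}) \le \varepsilon^{1/d}/2 + N(O_j) + \varepsilon^{1/d}/2, $$
and because $B_N(\beta_j,\varepsilon^{1/d}/2)$ and $B_N(\alpha_{j+1}, \varepsilon^{1/d}/2)$ sit in distinct clusters they are disjoint, so $[\beta_j, \alpha_{j+1}]$ starts (resp. ends) in the first (resp. second) of these balls over an $N$-length of at least $\varepsilon^{1/d}/2$. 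Its outside contribution is therefore at most $N(\beta_j - \alpha_{j+1}) - \varepsilon^{1/d} \le N(O_j)$. Analogous bounds handle $[x, \alpha_1]$ and $[\beta_m, y]$, and summing yields $\tilde T_{\varepsilon}(\pi') \le \sum_{j=0}^m N(O_j) = \tilde T_{\varepsilon}(\pi)$. The main obstacle I expect is the bookkeeping near the endpoints and in degenerate cases, together with the overlap lemma used to keep the intra-cluster segments inside $\Sigma_{N,\varepsilon}$.
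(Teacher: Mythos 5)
Your overall strategy --- decompose $[\pi]$ into inside and outside arcs, route through Poisson centres via overlapping-ball chains, and use the triangle inequality together with the saved half-diameters at the two ends of each inter-cluster segment --- is the same as the paper's. However, there is a genuine gap in the step where you handle the inter-arc segments.

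You assert that \emph{``because $B_N(\beta_j,\varepsilon^{1/d}/2)$ and $B_N(\alpha_{j+1},\varepsilon^{1/d}/2)$ sit in distinct clusters they are disjoint.''} This is not guaranteed. You decompose $[\pi]\cap\Sigma_{N,\varepsilon}$ into arcs $A_1,\dots,A_m$ by visit, and nothing forces consecutive visits to lie in distinct connected components: the curve $[\pi]$ may exit a component $\mathcal C$, wander outside, and re-enter the same $\mathcal C$. In that case $\beta_j$ and $\alpha_{j+1}$ belong to one and the same cluster, the two balls may overlap (or coincide), and the $N$-length computation that extracts a $\varepsilon^{1/d}$ saving from the two ends of $[\beta_j,\alpha_{j+1}]$ breaks down as written (if $N(\beta_j-\alpha_{j+1})<\varepsilon^{1/d}$ the claimed bound $N(\beta_j-\alpha_{j+1})-\varepsilon^{1/d}$ is negative, while the outside contribution is $0$). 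The paper prevents this by a preliminary reduction: whenever the curve visits the same component $\mathcal C$ more than once, it splices in a path inside $\mathcal C$ from the first entry to the last exit (which has zero travel time), so that after this step each component is visited exactly once and consecutive balls are automatically in distinct, hence disjoint, clusters. You would need either to perform the same reduction, or to observe explicitly that in the same-cluster case $[\beta_j,\alpha_{j+1}]\subset\Sigma_{N,\varepsilon}$, so its outside contribution is $0\le N(O_j)$ --- but neither of these appears in your argument.

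A second, smaller issue follows from the same omission: without the reduction, the concatenated vertex list may contain the same Poisson point in two different chains (non-consecutively), and ``deleting duplicate consecutive vertices'' does not remove such repetitions, so the resulting $\pi'$ need not satisfy the distinctness required of a polygonal path. After the paper's reduction this cannot happen, since the chains lie in pairwise disjoint components; alternatively one may delete any loop of $\pi'$, which only decreases the travel time. Also note that ``the vertices are distinct a.s.\ since $\Xi$ is simple'' does not address this: simplicity of $\Xi$ does not prevent you from listing the same centre twice.
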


\begin{proof}
Fix $x,y\in \mathbb{R}^d$ and  consider a path $\pi \in \hat\Pi (x,y)$. We want to construct a path $\pi'\in \Pi (x,y)$ such that $\tilde T_\varepsilon(\pi')\le \tilde T_\varepsilon(\pi)$.

We associate with the path $\pi$ the sequence $(\mathcal C_1, \dots , \mathcal C_k)$ of the $k$ connected components of $\Sigma_\varepsilon$ that the curve $[\pi]$ intersects ranked by the order in which they appear in $[\pi]$ ($k\in \mathbb{N}$ may be null). If there exists $i_0 < i_1$ such $\mathcal C_{i_0} = \mathcal C _{i_1} :=\mathcal C$, we can replace $\pi$ by $\hat \pi$ defined as the concatenation of the three following  paths:
\begin{itemize}
\item the subpath $\pi_{1}$ of $\pi$ from $x$ to the first point $a$ in $[\pi] \cap \mathcal{C}$;
\item the subpath $\pi_{3}$ of $\pi$ from the last point $b$ in $[\pi] \cap \mathcal{C}$ to $y$;
\item between those subpaths, a  path $\pi_{2}$ between $a$ and $b$ satisfying $[\pi_{2}]\subset \mathcal{C}$, {\it i.e.}, that remains inside $\mathcal{C}$.
\end{itemize}
Since $\tilde T_\varepsilon (\pi_{2})=0$, we have $\tilde T_\varepsilon (\hat \pi) =\tilde T_\varepsilon (\hat \pi_{1}) +  \tilde T_\varepsilon (\hat \pi_{3}) \leq \tilde T_\varepsilon ( \pi) $, thus we can suppose that the connected components $(\mathcal C_1, \dots , \mathcal C_k)$ are distinct and  the path $\pi$ successively  enters in the distinct connected components  $(\mathcal C_1, \dots , \mathcal C_k)$ and never go back to one of them once it has left it.
For sake of clarity, we assume here that $x$ and $y$ does not belong to $\Sigma_\varepsilon$. We let the reader check that the proof below can be easily adapted if this condition does not hold.

\begin{figure}
\begin{center}
\includegraphics[width=10cm]{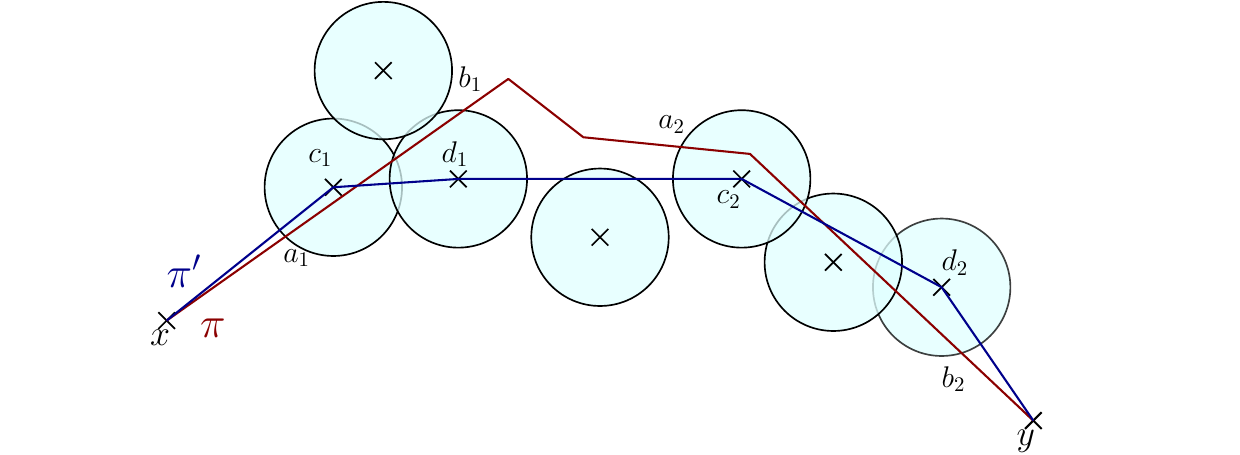}
\end{center}
\caption{The path $\pi$ (in red) is a generalized path from $x$ to $y$. In blue, a polygonal path $\pi'$ from $x$ to $y$ such that $\tilde T_\varepsilon(\pi')\le \tilde T_\varepsilon(\pi)$. } 
\label{Fig:geodesique}
\end{figure}

Let us denote by $(a_i, 1\le i \le k)$ the first point of $[\pi]$ in  $\mathcal{C}_i$ and 
$(b_i, 1\le i \le k)$ its last point (see Figure \ref{Fig:geodesique}). Then we have
$$\tilde T_\varepsilon(\pi)=\sum_{i=0}^k N(a_{i+1}-b_i)$$
with the convention $x=b_0$ and $y=a_{k+1}$. Let $c_i\in \Sigma\cap \mathcal{C}_i$ be such that $a_i\in \partial B_N(c_i,\varepsilon^{1/d}/2)$ and $d_i\in \Sigma\cap \mathcal{C}_i$ such that $b_i\in \partial B_N(c_i,\varepsilon^{1/d}/2)$. Let $\pi_i=(c_i,x_{i,1},\ldots,x_{i,n_i},d_i)$ be a polygonal path ({\it i.e.}, with vertices in $\Xi$) from $c_i$ to $d_i$ which remains in $\mathcal{C}_i$. Consider now the  path $\pi'$ from $x$ to $y$ which is the concatenation of the paths $(\pi_i,1\le i \le k)$, namely
$$\pi':=(x,c_1,x_{1,1},\ldots,x_{1,n_1},d_1,c_2,\ldots,d_k,y).$$
By construction $\pi'\in \Pi(x,y)$ and 
\begin{eqnarray}
\tilde T_\varepsilon(\pi')&=&N([x,c_{1}]\cap \Sigma_\varepsilon^c)+N([d_k,y]\cap \Sigma_\varepsilon^c)+\sum_{i=1}^{k-1} N([d_i,c_{i+1}]\cap \Sigma_\varepsilon^c) \nonumber \\
&\le & N(c_1-x)-\frac{\varepsilon^{1/d}}{2}+N(y-d_k)-\frac{\varepsilon^{1/d}}{2}+\sum_{i=1}^{k-1} \left (N(c_{i+1}-d_i)-\varepsilon^{1/d} \right).\label{e:egalite}
\end{eqnarray}
But, by triangle inequality, we have
$$N(c_{i+1}-d_i)\le N(b_i-d_i)+N(a_{i+1}-b_i)+N(c_{i+1}-a_{i+1})=\varepsilon^{1/d}+N(a_{i+1}-b_i) \,,$$
and 
$$N(c_1-x)\le N(a_1-x)+\frac{\varepsilon^{1/d}}{2} \qquad \textrm{and} \qquad N(y-d_k)\le N(y-b_k)+\frac{\varepsilon^{1/d}}{2}.$$
So we get that $\tilde T_\varepsilon(\pi')\le \tilde T_\varepsilon(\pi)$ as wanted.

\end{proof}
The next step is now to prove  the existence of geodesics and of a time constant for this model. However, the proofs are more classical in this case than for the model with rewards. Indeed, the random variables $(\tilde T_\varepsilon(x,y),x,y\in \R^d)$ are the travel times in a first passage percolation model, in particular, they are non negative, satisfy the triangle inequality and are bounded by $N(y-x)$. A classical use of Kingman's subadditive ergodic theorem enables then to prove the existence of the time constant: for every $z\in \mathbb{R}^d$, there exists a constant $ \tilde \mu_\varepsilon (z) \in [0,N(z)]$ such that
\begin{equation*}
\lim_{s \rightarrow \infty} \frac{ \tilde{T}_{\varepsilon} (0,sz)}{s} = \tilde \mu_{\varepsilon} (z) \quad \textrm{ a.s. and in }L^1.
\end{equation*}
Besides, the Euclidean case $N:=\|\cdot\|_2$,
has been studied in \cite{GT17} where a condition is given to ensure that $\tilde \mu_\varepsilon(\cdot):=\tilde\mu_{2,\varepsilon}(\cdot)$ is strictly positive in terms of a percolation event for the Boolean model $\Sigma_{\|\cdot\|_2,\varepsilon}$. This result implies in particular that $\tilde\mu_{2,\varepsilon}(\cdot)$ is a norm for small enough $\varepsilon$. Using the equivalence of the norm on $\R^d$, we have that $\Sigma_{N,\varepsilon}\subset \Sigma_{\|\cdot\|_2,c\varepsilon}$ for some $c>0$ so it is  easy to deduce that, in fact, for any norm $N$, the function $\tilde\mu_{N,\varepsilon}(\cdot)$ is strictly positive for small enough $\varepsilon$.\footnote{ It should be possible to characterize exactly the set of $\varepsilon$ such that $\tilde\mu_{N,\varepsilon}(\cdot)$ is a norm in terms of an event of percolation for the Boolean model $\Sigma_{N,\varepsilon}$ similar to the one given in \cite{GT17}.}
When $\tilde \mu_{\varepsilon}:=\tilde\mu_{N,\varepsilon}(\cdot)$ is a norm, a shape theorem also holds:
$$\mbox{for any $\delta>0$, for $t$ large enough } \quad \{z\in\R^d, \tilde T_\varepsilon(0,z)\le t\} \subset (1+\delta)B_{ \tilde \mu_{\varepsilon}} (1) \mbox{ a.s.}$$
where $B_{ \tilde \mu_{\varepsilon}} (1)$ denotes the unit ball for the norm $\tilde \mu_{\varepsilon}$.
This implies in particular that for any $n\in \N$ and any $x,y\in B_N(n)$, there exists a.s. some $K>0$ such that
$$\tilde T_\varepsilon (x,y):=\inf \{\tilde T_\varepsilon (\pi) \,:\, \pi \in \Pi(x,y)\}=\min \{\tilde T_\varepsilon (\pi) \,:\, \pi \in \Pi(x,y), \pi \subset B_N(K)\} \,,$$
since the last set of paths is a.s. finite. This gives the existence, for any $x,y\in \R^d$, of a geodesic $\gamma_\varepsilon(x,y)\in \Pi(x,y)$, \emph{i.e.}, a polygonal path such that
$$\tilde T_\varepsilon (x,y)=\tilde T_\varepsilon (\gamma_\varepsilon(x,y)).$$
Of course, such geodesic is not unique since we can modify it inside any connected component of $\Sigma_\varepsilon$.
To compare the model with rewards and the first passage percolation model, we will need in fact more properties on the geodesics we consider. So we introduce here a third set of paths, namely $\check{\Pi}_\varepsilon (x,y)$, and prove that the geodesics can be chosen inside this set. For a path $\pi = (x=x_0, x_1, \dots , x_n=y)$ from $x$ to $y$, let us denote by $(\mathcal{C}_1,\ldots,\mathcal{C}_k)$ the connected component of $\Sigma_\varepsilon$ that the curve $[\pi]$ intersects. 
Define $\check{\Pi}_\varepsilon (x,y)$ as the set of paths  from $x$ to $y$ such that
 \begin{itemize}
\item[(i)] $\pi\in \Pi(x,y)$.
\item[(ii)] When $[\pi]$ exits a connected component of $\Sigma_\varepsilon$, it never re-enters again.
\item[(iii)] for every connected component $\mathcal C_i$, the subpath $[\pi] \cap \mathcal C_i$ can be written as $\pi_i = (a_i,x_{i,0}, \dots , x_{i,m_i}, b_i)$ where $m_i\ge 0$, $a_i$ (resp. $b_i$) is in the boundary of $\mathcal C_i$ and $B_N (x_{i,0}, \varepsilon^{1/d}/2)$ (resp. $B_N (x_{i,m_i}, \varepsilon^{1/d}/2)$) and for every $j\in \{ 0 ,\dots , m_i-1\}$, $N ( x_{i, j + 1} - x_{i,  j}) \leq \varepsilon^{1/d}$.
\end{itemize}
We emphasize that we impose that $m_i\ge 0$ in the third point. This means that each time the path enters a connected component  $\mathcal{C}$ of $\Sigma_\varepsilon$, it gets at least  trough one point of $\Xi\cap \mathcal{C}$.

\begin{prop}
\label{prop:geodes} For $\varepsilon$ small enough there exists a.s.,
for every $x,y \in \mathbb{R}^d$, a path $\check\gamma_\varepsilon(x,y)\in \check \Pi_{\varepsilon} (x,y)$ such that 
$$ \tilde T_{\varepsilon} (x,y)  = \tilde T_{\varepsilon}(\check\gamma_\varepsilon(x,y))\,.$$
\end{prop}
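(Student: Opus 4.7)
The plan is to start from a polygonal geodesic whose existence is guaranteed by the shape theorem argument sketched just before the statement, and then modify it in two successive steps so that it satisfies conditions (ii) and (iii) without changing its travel time. Throughout we work on the almost sure event on which $\tilde\mu_\varepsilon$ is a norm, the Boolean model $\Sigma_\varepsilon$ has only bounded connected components (which holds for small $\varepsilon$ by comparison with the Euclidean Boolean model in its subcritical phase), each bounded region of $\R^d$ contains finitely many points of $\Xi$, and the shape theorem holds.

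First, I would fix $x,y \in \R^d$ and let $R$ be large enough so that every path $\pi$ from $x$ to $y$ with $\pi \not\subset B_N(R)$ has $\tilde T_\varepsilon(\pi) > N(y-x) \ge \tilde T_\varepsilon(x,y)$; this uses the shape theorem and the fact that $\tilde\mu_\varepsilon$ is a norm. Proposition \ref{prop:geodespoly} reduces the infimum defining $\tilde T_\varepsilon(x,y)$ to one over $\Pi(x,y)$, and one further restricts to paths whose polygonal curve is contained in $B_N(R)$ together with the (a.s.\ finitely many) connected components of $\Sigma_\varepsilon$ that it meets. Since $\Xi \cap B_N(R)$ is a.s.\ finite, the resulting set of paths is finite, so an actual minimizer $\gamma \in \Pi(x,y)$ exists almost surely.

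Next, I enforce (ii). If the polygonal curve $[\gamma]$ enters the same connected component $\mathcal{C}$ of $\Sigma_\varepsilon$ twice, let $a$ be the first point of $[\gamma] \cap \mathcal{C}$ and $b$ the last, and replace the portion of $\gamma$ between $a$ and $b$ by a polygonal path of the form $(a, x_{0},\dots,x_{m}, b)$ whose intermediate vertices are points of $\Xi \cap \mathcal{C}$ and whose polygonal curve stays in $\mathcal{C}$; such a path exists because $\mathcal{C}$ is path-connected and consists of balls centered at points of $\Xi$. The travel time of the replaced portion is $0$ because everything lies in $\Sigma_\varepsilon$, so the new path is still a geodesic in $\Pi(x,y)$ and it visits $\mathcal{C}$ only once. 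Repeating this operation for each multiply-visited component (there are only finitely many) yields a geodesic satisfying (ii).

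Finally, I enforce (iii). Consider one of the connected components $\mathcal{C}_i$ that the path now meets exactly once, and let $a_i, b_i$ be the entry and exit points of $[\gamma] \cap \mathcal{C}_i$ on $\partial \mathcal{C}_i$. By definition of $\Sigma_\varepsilon$, there exist $c, c' \in \Xi \cap \mathcal{C}_i$ with $a_i \in \partial B_N(c, \varepsilon^{1/d}/2)$ and $b_i \in \partial B_N(c', \varepsilon^{1/d}/2)$. Because $\mathcal{C}_i$ is a connected finite union of balls of the same diameter $\varepsilon^{1/d}$ centered at the points of $\Xi \cap \mathcal{C}_i$, the graph on $\Xi \cap \mathcal{C}_i$ with edges between centers at $N$-distance $\le \varepsilon^{1/d}$ (i.e. overlapping balls) is connected. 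So there exists a chain $c = x_{i,0}, x_{i,1},\dots, x_{i,m_i} = c'$ of points of $\Xi \cap \mathcal{C}_i$ with $N(x_{i,j+1}-x_{i,j}) \le \varepsilon^{1/d}$. Replace the portion of $\gamma$ between $a_i$ and $b_i$ by $(a_i, x_{i,0},\dots, x_{i,m_i}, b_i)$ (removing any accidental repetition of vertices to keep the path in $\Pi$). The entry/exit points are unchanged and the whole inserted portion lies in $\mathcal{C}_i \subset \Sigma_\varepsilon$, so the travel time is unchanged. Doing this for each visited component produces a geodesic in $\check\Pi_\varepsilon(x,y)$.

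The main technical point is to make sure each modification stays inside the class $\Pi(x,y)$ of polygonal paths with pairwise distinct vertices and preserves the already enforced properties; since every modification takes place strictly inside a single connected component $\mathcal{C}_i$ and the different $\mathcal{C}_i$'s are disjoint, the three properties can be enforced one component at a time without interference. The shape-theorem step at the beginning is what requires $\varepsilon$ to be small (so that $\tilde\mu_\varepsilon$ is a norm); once this is available, the rest of the argument is purely deterministic surgery on the geodesic.
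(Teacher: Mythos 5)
Your plan follows the same broad outline as the paper (existence of a polygonal geodesic via the shape theorem, then deterministic surgery inside the components it visits), but the crucial step is incompletely justified and in fact needs the very argument the paper uses that you omit.

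The surgery you describe inserts the boundary points $a_i$ and $b_i$ as explicit vertices of the modified path. These points lie on $\partial\mathcal C_i\subset\partial\Sigma_\varepsilon$ and are almost surely not points of $\Xi$, so the resulting path is \emph{not} in $\Pi(x,y)$ and hence cannot be in $\check\Pi_\varepsilon(x,y)$, whose condition (i) requires membership in $\Pi(x,y)$. (Writing ``removing any accidental repetition of vertices'' does not help: $a_i,b_i$ are new vertices, not repeats.) If you instead drop $a_i,b_i$ from the vertex list and join the last $\Xi$-vertex before $\mathcal C_i$ directly to $x_{i,0}$ (and $x_{i,m_i}$ directly to the next $\Xi$-vertex), then your assertion that ``the entry/exit points are unchanged'' is simply false: the new segment $[y_j,x_{i,0}]$ is a different segment, it may enter $\mathcal C_i$ at a new point $a_i'$ which need not lie on $\partial B_N(x_{i,0},\varepsilon^{1/d}/2)$ (it could first hit a different ball of $\mathcal C_i$), and it may also cross other connected components of $\Sigma_\varepsilon$ that $[\gamma]$ never met. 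Your claim that ``every modification takes place strictly inside a single connected component $\mathcal C_i$'' is then no longer true either, because these bridging segments live partly outside $\Sigma_\varepsilon$.

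The paper closes exactly this gap by a rigidity argument based on the geodesic property, which is absent from your proposal. It builds the new path $\pi'$ by joining centers $d_{i}$ (exit ball center of $\mathcal C_i$) to $c_{i+1}$ (entry ball center of $\mathcal C_{i+1}$), computes an upper bound for $\tilde T_\varepsilon(\pi')$ in which each bridging segment contributes at most $N(c_{i+1}-d_i)-\varepsilon^{1/d}$, and shows this upper bound is $\le\tilde T_\varepsilon(\pi)$. Since $\pi$ is a geodesic, all these inequalities are equalities; the equality $N([d_i,c_{i+1}]\cap\Sigma_\varepsilon^c)=N(c_{i+1}-d_i)-\varepsilon^{1/d}$ forces $[d_i,c_{i+1}]$ to intersect $\Sigma_\varepsilon$ in exactly the two disjoint half-balls around its endpoints. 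This is precisely what guarantees (a) that no new components are crossed, and (b) that the entry point into $\mathcal C_{i+1}$ lies on $\partial B_N(c_{i+1},\varepsilon^{1/d}/2)$ and the exit point from $\mathcal C_i$ lies on $\partial B_N(d_i,\varepsilon^{1/d}/2)$, as required by condition (iii). Without this equality argument, your surgery is not shown to land in $\check\Pi_\varepsilon(x,y)$.
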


\begin{proof}
Consider $\pi\in\hat \Pi(x,y)$ a geodesic between $x$ and $y$. We have already noticed that such path necessarily satisfies Point (ii) of the definition of $\check \Pi_{\varepsilon} (x,y)$. We construct now of new path $\pi'$ between $x$ and $y$ in the exact same way than in proof of Proposition \ref{prop:geodespoly} except that we impose moreover that the polygonal subpath $\pi_i=(c_i,x_{i,1},\ldots,x_{i,n_i},d_i)$ created inside $\mathcal{C}_i$ between $c_i$ and $d_i$ satisfies $N ( x_{i, j + 1} - x_{i,  j}) \leq \varepsilon^{1/d}$ (such sequence of points necessarily exists since $\mathcal{C}_i$ is a connected component of balls of diameter $\varepsilon^{1/d}$). 
Besides, since $\pi$ is a geodesic, we necessarily have $\tilde T_\varepsilon(\pi)=\tilde T_\varepsilon(\pi')$ so in particular, Inequality \eqref{e:egalite} must be an equality. This implies that  $[\pi']$ cannot intersect other connected components of $\Sigma_\varepsilon$ than the one already intersected by $\pi$. So we deduce that each time $\pi'$ enters a connected component $\mathcal{C}$ of $\Sigma_\varepsilon$, it gets at least  trough one point of $\Xi\cap \mathcal{C}$.
Hence, $\pi'$ is indeed a path of $\check \Pi_{\varepsilon} (x,y)$.
\end{proof}


\subsection{Comparison between $\tilde{\mu}_\epsilon(u)$ and $\mu_\epsilon(u)$}
\label{s:comp}

The comparison between $\tilde{\mu}_\epsilon(u)$ and $\mu_\epsilon(u)$ is made in two steps. The first and easy step is to notice that, by looking at nice geodesics for the model with balls, we can prove that $\tilde T_\varepsilon (0,x) \geq T_\varepsilon (0,x)$ for every $x$, thus $\tilde{\mu}_\epsilon(u) \leq \mu_\epsilon(u)$. In the second step, we have to prove that $\tilde T_\varepsilon (0,x) -  T_\varepsilon (0,x)$ is not too big. When a path $\pi = (0,x_1,\dots , x_n)$ goes through points $(x_i)$ of $\Xi$ that are at $N$-distance bigger than $\varepsilon^{1/d}$, the corresponding balls of the Boolean model do not overlap, thus $\tilde T_\varepsilon (\pi) \leq T_\varepsilon (\pi)$. The travel time $\tilde T_\varepsilon (\pi)$ may be larger than $T_\varepsilon (\pi)$ only if $\pi$ goes through balls of the Boolean model that overlap. The difference $\tilde T_\varepsilon (\pi) -  T_\varepsilon (\pi)$ can thus be controlled for a path $\pi= (0,x_1,\dots , x_n)$ by obtaining an upper bound on the numbers of couples of points among the $(x_i)$ that are at $N$-distance less than $\varepsilon^{1/d}$ (see Inequality \eqref{e:ajout}). This upper bound is proved through Lemmas \ref{lem:BversA} and \ref{lem:majA}, using ideas that are quite similar to the ones used in Section \ref{s:chernov}.

By Proposition \ref{prop:geodes}, for all $x\in \R^d$,we know that there exists a geodesic $\tilde \gamma_\varepsilon (x)=(0=x_0,\ldots,x_{n}=x) \in \check{\Pi}_\varepsilon (0,x)$ for $\tilde T_\varepsilon (0,x)$ such that, for all $0< i< n$, $x_i\in \Xi$ and if $x_i$ and $x_{i+1}$ are in the same connected component of $\Sigma_\varepsilon$, $N(x_{i+1}-x_i)\le \varepsilon^{1/d}$. 
Let $(\mathcal C_1,\dots , \mathcal C_k)$ be the distinct connected components of $\Sigma_\varepsilon$ that the curve $[\tilde \gamma_\varepsilon (x)]$ intersects, and write $[\tilde \gamma_\varepsilon (x)] \cap \mathcal C_i =  (a_i,x_{i,0}, \dots , x_{i,m_i}, b_i)$ as in property $(iii)$ of the definition of $\check{\Pi}_{\varepsilon} (0,x)$. Then
$$ N(\tilde \gamma_\varepsilon (x)\cap \Sigma_\varepsilon)  = \sum_{i=1}^k N(\tilde \gamma_\varepsilon (x)\cap \mathcal C_i)$$
and
$$ N(\tilde \gamma_\varepsilon (x)\cap \mathcal C_i) = \frac{\varepsilon^{1/d}}{2} + \sum_{j=0}^{m_i - 1} N (x_{i,j+1} - x_{i,j}) +   \frac{\varepsilon^{1/d}}{2} \leq (m_i+1) \varepsilon^{1/d} \,.$$
We get that 
\begin{equation}\label{e:pointFPP}
N(\tilde \gamma_\varepsilon (x)\cap \Sigma_\varepsilon)\le \varepsilon^{1/d} \sharp \tilde\gamma_\varepsilon (x) 
\end{equation}
thus
$$\tilde{T}_\varepsilon(0,x)\ge T_\varepsilon(0,x).$$
This already yields that $\tilde{\mu}_\epsilon(u)\ge \mu_\epsilon(u)$. 

Let's now find an upper bound for $\tilde{\mu}_\epsilon(u)-\mu_\epsilon(u)$.
For any polygonal path $\pi=(x_0,\ldots,x_{q+1})$ with  $x_i\in \Xi$ for $1\le i \le q$, we define 
$$Y(\pi):=\{j\in \llbracket 1 \,:\, q+1 \rrbracket, \exists i<j, N(x_j-x_i)\le \varepsilon^{1/d}\} \,.$$
Note that the balls $B_N(x_j,\varepsilon^{1/d}/2)$, $j \notin Y(\pi)$ are pairwise disjoint. By discarding, if necessary, the first and last of these balls, we obtain that
$$\tilde{T}(\pi)\le N(\pi)-\varepsilon^{1/d}(q-1-\card (Y(\pi)))\leq T(\pi)+\varepsilon^{1/d}(2+\card (Y(\pi))).$$
Applying this inequality to a geodesic  $\gamma_\varepsilon(su)$ from $0$ to $su$  in the model with rewards studied  previously, we get
\begin{equation}
\label{e:ajout}
\tilde{T}(\gamma_\varepsilon(su))\le T(\gamma_\varepsilon(su))+\varepsilon^{1/d}(2+ \card ( Y(\gamma_\varepsilon(su))) ).
\end{equation}
We obtain an upper bound for $\card (Y(\gamma_\varepsilon(su)))$ by proving the following two lemmas.

\begin{lemma}\label{lem:BversA} For a path $\pi=(x_0,\ldots,x_{q+1})$ let us define $$Z(\pi):=\{j\in \llbracket 1 \,:\, q+1 \rrbracket,  N(x_j-x_{j-1})\le 5\varepsilon^{1/d}\}.$$
Then we have a.s., for any geodesic $\gamma$ for the model with rewards,  $\card (Y(\gamma)) \le 2\card (Z(\gamma))$.
\end{lemma}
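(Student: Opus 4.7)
The plan is to exploit the geodesic property to control the local geometry of the path around each index in $Y$, and then extract enough short segments to build a map $Y \to Z$ with multiplicity at most two.

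First I would set up the key geodesic inequality. Fix $j \in Y(\gamma)$ and any witness $i < j$ with $N(x_j - x_i) \le \varepsilon^{1/d}$. Consider the shortcut polygonal path $\gamma' = (x_0,\ldots,x_i, x_j, x_{j+1}, \ldots, x_{q+1})$ obtained by deleting the vertices $x_{i+1}, \ldots, x_{j-1}$. Then $\sharp \gamma' = \sharp \gamma - (j-1-i)$ and $N(\gamma') = N(\gamma) - \sum_{k=i+1}^{j} N(x_k - x_{k-1}) + N(x_j - x_i)$, so the inequality $T_\varepsilon(\gamma) \le T_\varepsilon(\gamma')$ forces
\[
\sum_{k=i+1}^{j} N(x_k - x_{k-1}) \;\le\; N(x_j - x_i) + \varepsilon^{1/d}(j-1-i) \;\le\; \varepsilon^{1/d}(j-i).
\]

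Second, I would harvest this inequality to produce many indices in $Z$ inside the window $(i,j]$. Because the average segment length on $[i+1,j]$ is at most $\varepsilon^{1/d}$, the number of indices $k \in [i+1,j]$ with $N(x_k - x_{k-1}) > 5\varepsilon^{1/d}$ is strictly less than $(j-i)/5$, so $|[i+1,j] \cap Z| \ge 4(j-i)/5$. A direct consequence is the dichotomy: if $j \in Y$ and $j \notin Z$, then $N(x_j - x_{j-1}) > 5\varepsilon^{1/d}$ forces $j-i \ge 6$ (with $i$ any witness), and the interval $[i+1,j-1] \cap Z$ then has at least $4$ elements.

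Third, the main task is to build a map $\phi \colon Y \to Z$ that is at most $2$-to-$1$, which I would do by splitting $Y = (Y \cap Z) \sqcup (Y \setminus Z)$ and handling each piece separately. For $j \in Y \cap Z$, simply set $\phi(j) = j$: this part is trivially injective. For $j \in Y \setminus Z$, let $i(j)$ denote the largest witness of $j$ and assign $\phi(j)$ to a canonical element of $[i(j)+1, j-1] \cap Z$ (for instance the largest one, so that the image is localized as close to $j$ as possible). It then remains to verify that no element of $Z$ has more than one preimage inside $Y \setminus Z$; combined with the at-most-one preimage coming from $Y \cap Z$, this yields the factor $2$.

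The main obstacle is precisely this last verification: showing that the assignment on $Y \setminus Z$ is injective. The argument must combine the geodesic inequality with the maximality of the chosen $k$ in $[i(j)+1, j-1] \cap Z$. Suppose $j_1 < j_2$ in $Y \setminus Z$ were both mapped to the same $k$. The maximality of $k$ in the window of $j_2$ forces every segment at index in $(k, j_2]$ to be long ($>5\varepsilon^{1/d}$); in particular the segments $d_{k+1}, \ldots, d_{j_2}$ collectively contribute more than $5(j_2-k)\varepsilon^{1/d}$ to the sum $\sum_{m=i(j_2)+1}^{j_2} d_m \le \varepsilon^{1/d}(j_2 - i(j_2))$, while $d_{j_1}$ (also in that range, since $k < j_1 \le j_2$) is by hypothesis also $>5\varepsilon^{1/d}$. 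Adjoining the fact that $j_1$ itself is in $Y$ with its own witness $i(j_1)<k$ gives a second, competing geodesic constraint anchored at $x_{i(j_1)}$ and $x_{j_1}$; combining the two should push the total length on $[\min(i(j_1),i(j_2))+1, j_2]$ above $\varepsilon^{1/d}(j_2-\min(i(j_1),i(j_2)))$, contradicting the geodesic inequality applied to the witness pair $(i(j_2), j_2)$ after using $x_{j_1}$ as an intermediate close point. This clash of two shortcut estimates is the crux; everything else is routine book-keeping.
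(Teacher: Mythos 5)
Your first two steps are correct and match what underlies the paper's argument: for a geodesic, deleting the vertices strictly between a witness $i$ and $j$ yields, after comparing $T_\varepsilon(\gamma)\le T_\varepsilon(\gamma')$, the inequality $\sum_{m=i+1}^{j}N(x_m-x_{m-1})\le N(x_j-x_i)+\varepsilon^{1/d}(j-1-i)\le\varepsilon^{1/d}(j-i)$, and a pigeonhole argument then shows that a density $\ge 4/5$ of the window $[i+1,j]$ lies in $Z$. The problem is the third step. You define $\phi$ on $Y\setminus Z$ by sending $j$ to the largest $Z$-index in $[i(j)+1,j-1]$ and then assert that this is injective, but you never actually establish the crucial claim. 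Your closing paragraph gestures at ``combining two competing geodesic constraints,'' but as written it does not produce a concrete inequality and a contradiction. In fact, if $j_1<j_2$ both in $Y\setminus Z$ map to the same $k$, all one directly extracts from the two witness inequalities is $5(j_2-k)<j_2-i(j_2)$ and $5(j_1-k)<j_1-i(j_1)$; these are compatible with one another and do not immediately clash. The further triangle-inequality facts you allude to (that $x_{j_1}$ and $x_{j_2}$ are both near vertices with index below $k$ while the edges on $(k,j_2]$ are all long) do exert pressure, but the bookkeeping required to turn this into a contradiction is genuinely delicate and is the entire content of the lemma; leaving it at ``this clash is the crux; everything else is routine'' leaves a real gap.

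For comparison, the paper avoids the injective-assignment strategy and hence the overlapping-windows difficulty that is tripping you up. It enumerates $Y(\gamma)\setminus Z(\gamma)=\{j(1)<\cdots<j(l)\}$ and proves by induction that the prefix $\gamma_{j(i)}$ already contains at least $i$ indices of $Z$. The two ingredients are: (a) if a subpath $\pi$ of $\gamma$ has $T_\varepsilon(\pi)\le -K\varepsilon^{1/d}$, then since each term $N(x_m-x_{m-1})-\varepsilon^{1/d}\ge-\varepsilon^{1/d}$, at least $K$ terms are negative and hence contribute to $Z$; (b) when $j(i)$ has a witness $k$, one locates $m$ with $j(m)<k\le j(m+1)$ and, by additivity of $T_\varepsilon$ along the geodesic, peels off the contributions $T_\varepsilon(x_{j(p)-1},x_{j(p)})\ge 3\varepsilon^{1/d}$ for $p=m+1,\dots,i$ to show the remainder of the subpath from $x_k$ to $x_{j(i)}$ has $T_\varepsilon\le -2(i-m)\varepsilon^{1/d}$, hence contains $\ge i-m$ indices of $Z$, disjoint from the $\ge m$ already obtained on $\gamma_{j(m)}$. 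This cumulative, telescoping bookkeeping sidesteps the need to make any particular assignment injective. If you want to pursue your approach, you must supply the omitted injectivity argument in full; alternatively, switching to the paper's inductive counting will close the proof more easily.
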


\begin{lemma}\label{lem:majA} Let $C_2$ be such that Theorem \ref{theo:mu} holds. Fix $(u,H$) satisfying \eqref{e:(u,H)}. Then, for any $\lambda>0$, there exists $\varepsilon_0>0$ such that for $\varepsilon<\varepsilon_0$, we have, a.s. for $s$ large enough, for any geodesic $\gamma_\varepsilon (su)$ from $0$ to $su$ for the model with rewards,
$$\card ( Z(\gamma_\varepsilon(su)) ) \le \lambda g_u(C_2\varepsilon^{-1})\varepsilon^{-1/d}s.$$
\end{lemma}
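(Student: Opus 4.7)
The plan is to adapt the Chernoff scheme of Lemma \ref{lemmeChernov} by inserting an additional exponential tilt $e^{b(\card(Z(\pi))-M)}$ that penalizes paths with many close consecutive segments. Let $\eta_0:=g_u(C_2\varepsilon^{-1})$, fix some $\delta\in(0,1]$ and set $\eta:=(1+\delta)\eta_0$. By Theorem \ref{theo:mu}, for $\varepsilon$ small enough we may assume almost surely that, for $s$ large, $N(\gamma_\varepsilon(su))\le(1+\eta)s$ and $\sharp\gamma_\varepsilon(su)\le\eta\varepsilon^{-1/d}s$. Following the splitting step in the proof of Proposition \ref{prop:Chernov}, we cut $\gamma_\varepsilon(su)$ at its first crossing with the affine hyperplane $su+H$ into $\pi_1$ (ending at a point of $su+H$, with all its vertices lying in $\{x\cdot u^*\le s\}$) and $\pi_2$ (from that crossing to $su$). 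Since $N(v)\ge v\cdot u^*$ for every $v\in\R^d$, we have $N(\pi_1)\ge s$ and hence $N(\pi_2)\le\eta s$; moreover a direct segment-by-segment check shows that $\card(Z(\gamma_\varepsilon(su)))\le\card(Z(\pi_1))+\card(Z(\pi_2))$. It therefore suffices to prove, for $i=1,2$, that $\card(Z(\pi_i))\le (\lambda/2)\eta_0\varepsilon^{-1/d}s$ almost surely for $s$ large.

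For $\pi_1$, set $M:=(\lambda/2)\eta_0\varepsilon^{-1/d}s$ and repeat the Chernoff computation of Lemma \ref{lemmeChernov} with the extra tilt above. After the change of variables $y_i:=x_i-x_{i-1}$, one obtains, for every integer $q\ge 0$,
\[
\P(\mathcal E_1(q))\,\le\,e^{2\alpha\eta s-b(M-1)}\,K(\alpha,\eta,b)^q,
\]
where $\mathcal E_1(q)$ denotes the event that some $\pi_1\in\Pi(0,su+H)$ satisfies $\sharp\pi_1=q$, $x\cdot u^*\le s$ for all its vertices, $N(\pi_1)\le(1+\eta)s$ and $\card(Z(\pi_1))\ge M$, and
\[
K(\alpha,\eta,b)\,:=\,\int_{\R^d} e^{-\alpha(N(y)-(1-\eta)y\cdot u^*)+b\1_{N(y)\le 5\varepsilon^{1/d}}}\,dy.
\]
Splitting $K$ over $\{N(y)\le 5\varepsilon^{1/d}\}$ and its complement and choosing $\alpha^d=2I(\eta)$ as in Lemma \ref{lemmeChernov} makes the non-tilted part equal to $1/2$; Lemma \ref{l:I+} together with the bound $I(\eta)\le 2c_2 h_u(\eta)=2c_2C_2/\varepsilon$ gives $\alpha\le C_\star\varepsilon^{-1/d}$ for some constant $C_\star=C_\star(N,d,u,H)$, and on the close region the equivalence of norms yields $|y\cdot u^*|\le c\varepsilon^{1/d}$, whence $e^{\alpha(1-\eta)y\cdot u^*}$ is bounded by a constant. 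Consequently $K\le\tfrac12+(e^b-1)E'\varepsilon$ for some constant $E'=E'(N,d,u,H)$.

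The crucial point is that $2\alpha\eta s\le 2C_\star(1+\delta)\eta_0\varepsilon^{-1/d}s$ while $bM=(b\lambda/2)\eta_0\varepsilon^{-1/d}s$: choosing $b$ as an $\varepsilon$-independent constant with $b\lambda\ge 8C_\star(1+\delta)$ yields $bM\ge 4\alpha\eta s$, hence $2\alpha\eta s-b(M-1)\le -2\alpha\eta s$, and for $\varepsilon$ small enough (depending on $b$, hence on $\lambda$) we also have $K\le 3/4$. Summing $\P(\mathcal E_1(q))$ over $q\ge 0$ then yields $4e^{-2\alpha\eta s}$, which decays exponentially in $s$ at fixed $\varepsilon$; a Borel--Cantelli argument along integer values of $s$, extended by the monotonicity trick used at the end of the proof of Proposition \ref{prop:Chernov}, concludes the bound on $\card(Z(\pi_1))$. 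For $\pi_2$, whose $N$-length is already $\le\eta s$, the same scheme works with the untilted constant parameters $\alpha':=(2I_0)^{1/d}$ (where $I_0=\int_{\R^d}e^{-N(y)}dy$) and $b':=1$, since the factor $\varepsilon^{-1/d}$ in $M$ makes $b'M$ dominate $\alpha'\eta s$ as $\varepsilon\to 0$. The main obstacle is precisely this choice of the Chernoff tilt for $\pi_1$: a constant $\alpha$ would fail in the degenerate regime $\eta_0\sim\varepsilon$ (for instance when $B_N(1)$ has no $(d-1)$-dimensional flat face at $u$), and the scaling $\alpha\sim\varepsilon^{-1/d}$ inherited from Lemma \ref{lemmeChernov} is what makes $\alpha\eta$ match the target scale $\eta_0\varepsilon^{-1/d}$ of $M$ while keeping $b$ an $\varepsilon$-independent constant.
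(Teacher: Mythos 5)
Your proof is correct, and it takes a noticeably different technical route from the paper's, though both are Chernoff-type arguments built on the infrastructure of Lemma~\ref{lemmeChernov}. You split the geodesic at its first crossing of $su+H$ into $\pi_1$ and $\pi_2$ (as in the proof of Proposition~\ref{prop:Chernov}) and bound $\card(Z(\pi_1))$, $\card(Z(\pi_2))$ separately, handling the overcounting of the split segment by $\card(Z(\gamma))\le\card(Z(\pi_1))+\card(Z(\pi_2))$; the paper instead works directly with paths in $\Pi(0,su+H)$. More substantially, your Chernoff tilt is different: you add a single factor $e^{b\,\card(Z(\pi))}$ with $b$ an $\varepsilon$-independent constant, and the payoff comes from the smallness of the Lebesgue volume of $\{N(y)\le 5\varepsilon^{1/d}\}$, which makes the tilted part of $K$ an $O(\varepsilon)$ perturbation of $1/2$ once one has verified (via Lemmas~\ref{l:I+} and~\ref{lem:CDV}) that $\alpha\lesssim\varepsilon^{-1/d}$, so that $e^{\alpha(1-\eta)y\cdot u^*}$ is uniformly bounded on the close region. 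The paper instead expands a union over subsets $Z\subset\{1,\dots,q\}$ with $|Z|\ge q_0$ (paying a $2^q$ factor, and therefore needing the a priori bound $\sharp\gamma_\varepsilon(su)\le q_1$ from Theorem~\ref{theo:mu}), and tilts each index in $Z$ by $e^{\frac{\alpha}{2}(\mathrm{const}\cdot\varepsilon^{1/d}-t_j\cdot u^*)}$, which effectively shifts $\eta\mapsto\eta+\tfrac12$ and exploits $I(\tfrac12+\eta)\ll I(\eta)$ through the interpolation $I(\eta)^{1-|Z|/q}I(\tfrac12+\eta)^{|Z|/q}$; this also forces a slightly different choice $\alpha^d\asymp\varepsilon^{-1+\lambda/2}$. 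Your indicator tilt is arguably cleaner — it avoids the subset sum and does not use the upper bound on $\sharp\gamma_\varepsilon(su)$ — and you correctly identify the need for $\alpha\asymp\varepsilon^{-1/d}$ (rather than a constant $\alpha$) so that $2\alpha\eta s$ can be dominated by $bM$ with $b$ constant. Two small points you gloss over: the Chernoff bound for $\pi_2$ should be stated for the time-reversed path started from $su$ (to eliminate the random starting point on $su+H$), which is what the paper does implicitly in Proposition~\ref{prop:Chernov}; and the constant in the summed bound is $4e^{b}e^{-2\alpha\eta s}$ rather than $4e^{-2\alpha\eta s}$, a harmless discrepancy.
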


Assuming  these lemmas hold, we get that for any $\lambda>0$, for $\varepsilon$ small enough we have a.s. for $s$ large enough,
$$\tilde{T}(\gamma_\varepsilon(su))\le T(\gamma_\varepsilon(su))+(2 \varepsilon^{1/d}+2\lambda g_u(C_2\varepsilon^{-1})s)$$
which implies that 
$$\limsup_{\varepsilon\to 0} \frac{\tilde{\mu}_\epsilon(u)-\mu_\epsilon(u)}{g_u(C_2\varepsilon^{-1})}\le 2\lambda$$
and so Theorem \ref{theo:FPP} $(iii)$ holds.

\begin{proof}[Proof of Lemma \ref{lem:BversA}]
To prove this inequality between the cardinal of $Y(\pi)$ and $Z(\pi)$, we in fact prove that 
 $$\card  (Y(\pi)\setminus Z(\pi) )\le \card ( Z(\pi) ).$$
 Let us first note that for a path $\pi=(0,x_1,\ldots, x_q)$ with $x_i\in \Xi$ for $1 \leq i \leq q$, we have a.s. (since $0 \notin \Xi$ a.s.) $$T_\varepsilon(\pi)=\sum_{i=1}^q \left( N(x_i-x_{i-1})-\varepsilon^{1/d}\right) \,.$$
Thus if $T_\varepsilon(\pi)\le -K\varepsilon^{1/d}$ it implies in particular that $\card (Z(\pi))\ge K$.
Let now $\gamma=(0,x_1,\ldots,x_q,x)$ be a geodesic from $0$ to $x$ for $T_\varepsilon$ and let $j(1)<\ldots<j(l)$ be the indices such that 
$$Y(\gamma)\setminus Z(\gamma)=\{x_{j(1)},\ldots,x_{j(l)}\}.$$
Let us define, for $i\le q$, the truncated path
$\gamma_i=(0,x_1,\ldots,x_i)$.
Let us prove by induction on $i$ that 
\begin{equation}\label{eq:recZ}
\card ( Z(\gamma_{j(i)}))\ge i
\end{equation}
which would imply that
$$\card ( Z(\gamma))\ge \card (Z(\gamma_{j(l)}))\ge l=\card (Y(\gamma)\setminus Z(\gamma) )\,.$$
We have by definition $x_{j(1)}\in Y(\gamma)\setminus Z(\gamma)$, \emph{i.e.}, there exists some $k<j(1)$ such that $N(x_{j(1)}-x_k)\le \varepsilon^{1/d}$ but $N(x_{j(1)}-x_{j(1)-1})\ge 5\varepsilon^{1/d}$.
Using that 
$$\varepsilon^{1/d}\ge N(x_k-x_{j(1)})\ge T_\varepsilon(x_k,x_{j(1)})=T_\varepsilon(x_k,x_{j(1)-1})+T_\varepsilon(x_{j(1)-1},x_{j(1)})\ge T_\varepsilon(x_k,x_{j(1)-1})+3\varepsilon^{1/d} $$
we get that 
$$T_\varepsilon(x_k,x_{j(1)-1})\le -2\varepsilon^{1/d}$$
which implies in particular that 
$$\card (Z(\gamma_{j(1)}))\ge \card ( Z((x_k,\ldots,x_{j(1)-1})))\ge 1.$$
Fix $i$, and assume now that for every $m<i$, we have $\card ( Z(\gamma_{j(m)}))\ge m$. Let $k<j(i)$ be such that $N(x_{j(i)}-x_k)\le \varepsilon^{1/d}$. Let $m<i$ be such that $j(m)<k\le j(m+1)$ with the convention $j(0)=0$. We have
$$\varepsilon^{1/d}\ge T_\varepsilon(x_k,x_{j(i)})=T_\varepsilon(x_k,x_{j(m+1)-1})
+\sum_{p=m+1}^{i}T_\varepsilon(x_{j(p)-1},x_{j(p)})+ \sum_{p=m+1}^{i-1}T_\varepsilon(x_{j(p)},x_{j(p+1)-1}) \,. $$
By definition of the indexes $j(p)$, we have $T_\varepsilon(x_{j(p)-1},x_{j(p)})\ge 3\varepsilon^{1/d}$ so we get
$$ T_\varepsilon(x_k,x_{j(m+1)-1})+
 \sum_{p=m+1}^{i-1}T_\varepsilon(x_{j(p)},x_{j(p+1)-1}) \le \varepsilon^{1/d}(1-3(i-m)) \le -2(i-m)\varepsilon^{1/d} \,.$$
 This implies in particular that 
 $$ \card ( Z((x_k,\ldots,x_{j(i)})))\ge i-m$$
 for some $k\ge j(m)$.
But, by hypothesis, we have
 $$\card ( Z((0,\ldots,x_{k-1})))\ge \card ( Z((0,\ldots,x_{j(m)})))\ge m$$ 
 so we get 
 $$\card ( Z(\gamma_{j(i)}))\ge i \,.$$

\end{proof}

\begin{proof}[Proof of Lemma \ref{lem:majA}] Fix some $C_2$ such that Theorem \ref{theo:mu} holds. Fix some $\lambda>0$.
 Setting $\eta_\infty:=g_u(C_2\varepsilon^{-1})$, we want to prove that  for $\varepsilon$ small enough, we have, a.s. for $s$ large enough, for any geodesic $\gamma_\varepsilon (su)$ from $0$ to $su$ for the model with rewards,
 $$\card (Z(\gamma_\varepsilon(su)) ) \le \lambda \eta_\infty\varepsilon^{-1/d}s.$$
Recall that we have a.s. for $s$ large enough
$N(\gamma_\varepsilon(su))\le s(1+2\eta_\infty)$ and  $\sharp \gamma_\varepsilon(su)\le 2\eta_\infty\varepsilon^{-1/d}s$. Thus, it is sufficient to show that for $s$ large enough, for any path $\pi$ from $0$ to $su+H$ such that $N(\pi)\le s(1+2\eta_\infty)$ and $\sharp \pi\le 2\eta_\infty\varepsilon^{-1/d}s$ , we have $$\sharp Z(\pi)\le \lambda \eta_\infty\varepsilon^{-1/d}s.$$
Let $q_0:=\lambda \eta_\infty\varepsilon^{-1/d}s$, $q_1:=  2\eta_\infty\varepsilon^{-1/d}s$,  $\eta:=2\eta_\infty$ and set
$$\mathcal{H}(s):=\{\exists \pi \in \Pi (0, su+H),    N (\pi)\le (1+\eta )s, \sharp \pi \le q_1, \; \card ( Z(\pi) )\ge q_0 \}.$$
Proving that $\mathcal{H}(s)$ does not occur a.s. for $s$ large enough will yield Lemma \ref{lem:majA}. To bound $\P(\mathcal{H}(s))$, we roughly use the same idea as in Section \ref{s:chernov}. For a sequence of points $(x_i)_{1\le i \le q}$ in $ \Xi$, let us denote by $(t_i)_{1\le i \le q}$ their increments, \emph{i.e.}, $t_i=x_i-x_{i-1}$ (with the convention $t_1 = x_1$).
 Then, we have
\begin{equation*}
\mathbb{P} [\mathcal H (s) ] =\sum_{q\le q_1}\sum_{Z\subset \{1,\ldots,q\}, |Z|\ge q_0} \P(\mathcal{A}_Z)
\end{equation*}  
 where 
 $$\mathcal{A}_Z:=\{\exists (x_i)_{1\le i \le q}\in \Xi^q, x_{q}\cdot u^\star\le s,  \sum_{i=1}^q N(t_i)\le \eta s + x_{q}\cdot u^\star, \forall j \in Z, t_j\cdot u^\star\le 5\varepsilon^{1/d}\}.$$
 Using Chernov inequality, we have for all $\alpha, \beta >0$,
\begin{align*}
\mathbb{P} &  [\mathcal A_Z ] \\ &\le  \int_{(\R^d)^q} \exp\left(\beta(s-x_{q}\cdot u^\star)+\alpha(\eta s +x_{q}\cdot u^\star - \sum_{i=1}^{q} N(t_i))\right)\prod_{j\in Z} \exp\left(\frac{\alpha}{2}(4\varepsilon^{1/d}-(t_j\cdot u^\star))\right)
dt_1\ldots dt_q \,.
\end{align*} 
Taking $\beta=\alpha \eta$, we get
\begin{eqnarray*}
\mathbb{P} [\mathcal A_Z  ] &\le &   \exp(\alpha (2\eta s+2\varepsilon^{1/d} \card (Z))\int_{(\R^d)^q}\exp\left(-\alpha \sum_{i=1}^{q} \left[ N(t_i)- \left(1-\eta-\frac{1_{j\in Z}}{2} \right) (t_i\cdot u^\star) \right]\right)dt_1\ldots dt_q\\
&=&   \exp(2\alpha (\eta s+\varepsilon^{1/d}\card (Z))\left(\frac{I(\eta)}{\alpha^d}\right)^{q-\card (Z)}\left(\frac{I(\frac{1}{2}+\eta)}{\alpha^d}\right)^{\card (Z)}\\
&=&   \exp(2\alpha (\eta s+\varepsilon^{1/d}\card (Z))\left(\frac{I(\eta)^{1-\card (Z)/q}I(\frac{1}{2}+\eta)^{\card (Z)/q}}{\alpha^d}\right)^q.
\end{eqnarray*}

We know that $\lim_{\eta \to 0} I(\eta)=\lim_{\eta \to 0} h_u(\eta)=+\infty$ whereas $I(\eta)$ is  bounded on $[1/2,1]$. Since $\eta_\infty$ tends to 0 as $\varepsilon$ tends to 0, we deduce that for $\varepsilon$ small enough, $I(\eta)\ge I(\frac{1}{2}+\eta)$ and so we get that for $\card (Z)\ge q_0$ and $q\le q_1$,
\begin{align*}
I(\eta)^{1-\card (Z)/q}I\left(\frac{1}{2}+\eta \right)^{\card (Z)/q} & \le I(\eta)^{1-q_0/q_1}I\left(\frac{1}{2}+\eta \right)^{q_0/q_1}\\ &  =I(\eta)^{1-\lambda/2}I\left(\frac{1}{2}+\eta\right)^{\lambda/2}\\ & \le c_1 I(2\eta_\infty)^{1-\lambda/2}
\end{align*}
for $\lambda\in (0,1)$ and $c_1:=\sup\{I(x),x\in[1/2,1]\}\vee 1$.
Using \eqref{eq:I_1} in Lemma \ref{l:I+}, which states that $I$ is of the same order than $h_u$ near 0, we also get, for some constant $c_2$, for $\varepsilon$ small enough,
$$I(2\eta_\infty)\le c_2 h_u(2\eta_\infty) \le  c_2h_u(\eta_\infty)=c_2 C_2\varepsilon^{-1}.$$
So we get, for some constant $c_3>0$,
\begin{eqnarray*}
\mathbb{P} [\mathcal H (s) ] 
&\le& \sum_{q_0\le q\le q_1}2^q  \exp(2\alpha (2\eta_\infty s+\varepsilon^{1/d}q_1)\left(\frac{c_3 \varepsilon^{-1+\lambda/2}}{\alpha^d}\right)^{q}.\\
&\le& \sum_{q\ge q_0}  \exp(8\alpha \eta_\infty s)\left(\frac{2c_3 \varepsilon^{-1+\lambda/2}}{\alpha^d}\right)^{q}
\end{eqnarray*}
Taking $\alpha^d=4c_3\varepsilon^{-1+\lambda/2}$, \emph{i.e.}, $\alpha=c_4\varepsilon^{-(1/d)+\lambda/(2d)}$, we obtain
\begin{eqnarray*}
\mathbb{P} [\mathcal H (s) ] 
&\le& 2  \exp( 8c_4\varepsilon^{-(1/d)+\lambda/(2d)}\eta_\infty s- q_0\log 2)\\
&=& 2  \exp( -s\varepsilon^{-1/d}\eta_\infty(\lambda\log 2-8c_4\varepsilon^{\lambda/(2d)})).
\end{eqnarray*}
Hence, for any $\lambda \in (0,1)$, if $\varepsilon$ is small enough, this quantity decreases to 0 exponentially fast in $s$. Taking 
$$s_n:=n\varepsilon^{1/d}/(\lambda \eta_\infty)$$
Borel-Cantelli Lemma yields that there exists a.s. $n_0$ such that, for $n\ge n_0$,  $\mathcal{H}(s_n)$ does not occur.  Since for $s\in [s_n,s_{n+1}[$, $\lfloor \lambda \eta_\infty \varepsilon^{-1/d}s\rfloor = \lfloor \lambda \eta_\infty \varepsilon^{-1/d}s_n\rfloor$, we deduce using the same argument as in proof of Proposition \ref{prop:Chernov}, that in fact $\mathcal{H}(s)$ does not occur for $s$ large enough.

\end{proof}

\subsection{Proof of the lower bound on $1- \tilde{\mu}_\epsilon(u)$}
\label{s:lowbis}

When $1- \mu_\epsilon(u)$ is of order $g_u(C_2\varepsilon^{-1})$ when $\varepsilon$ goes to $0$, it is a straightforward consequence of Theorem \ref{theo:FPP} $(iii)$ that the same holds for $1- \tilde{\mu}_\epsilon(u)$. It will be the case for specific choices of the norm $N$, namely the $p$-norms, see Section \ref{s:Np}. However, in general setting, the lower bound on $1- \mu_\epsilon(u)$ given by Theorem \ref{theo:mu} is $\bar{g}_u(C_1\varepsilon^{-1})$, and there is no way to deduce the same lower bound for $1- \tilde \mu_\epsilon(u)$ through Theorem \ref{theo:FPP} $(iii)$. It is however easy to adapt the greedy algorithm used to prove the lower bound on $1- \mu_\epsilon(u)$ in Section \ref{s:glouton} to get the same type of lower bound on $1- \tilde \mu_\epsilon(u)$. The idea is to add an extra space between two consecutive points of the process $\Xi$ that are collected by the greedy algorithm, to make sure that the corresponding balls of diameter $\varepsilon^{1/d}$ in the Boolean model do not intersect. Let us get into details.

Fix $(u,H)$ satisfying \eqref{e:(u,H)} and recall that
$$M_\eta:= \{ v \in H \,:\, N(u+v)  \leq 1+ \eta \mbox{ and } N(u-v)  \leq 1+ \eta \}  \qquad \textrm{and} \qquad \bar h_u (\eta) := \eta^{-d} | M_\eta (u)| \,.$$
We state the following analog of Proposition \ref{t:thm8}.

\begin{prop}
\label{prop:gloutonespace}
There exists a constant $c'>0$ (depending only on $d,N$) such that, for all $(u,H)$ satisfying \eqref{e:(u,H)}, for all $\varepsilon >0$, for all $\eta\in (0,1)$, we have
$$ \tilde \mu_\varepsilon (u) \leq 1 + \eta - c' \varepsilon^{1/d}  |M_\eta |^{1/d} \,. $$
In particular, with $C_1' := \left(\dfrac{2}{c'}\right)^d$, we have that for $\varepsilon$ small enough (depending on $d,N,u$ and $H$),
$$\tilde \mu_\varepsilon (u) \leq 1 - \bar g_u (C'_1 \varepsilon^{-1}) \,.$$
\end{prop}

\begin{proof}
We use almost the same greedy algorithm as in the proof of Proposition \ref{t:thm8} except that we impose also that two consecutive points taken by the greedy path must be at distance at least $1$. 
Recall that for $s\geq0$ 
\begin{align*}
C_\eta (s) & :=  \{ x=\lambda u +v\, : \, 0 \leq \lambda \leq s \,,\, v \in H \,,\,N(\lambda u + v) \leq \lambda (1+\eta), N(\lambda u - v) \leq \lambda (1+\eta) \}\,,
\end{align*}
and we have
$$   |C_\eta (s) | = \frac{1}{d \| u^\star\|_2}  | M_\eta | s^d \,.$$
Recall that $\Xi$ denotes the points of a Poisson point process with unit intensity  on $\mathbb{R}^d$. 
By induction we define a sequence $(X_n)_{n\ge 0}$ in $\R^d$ with $X_0=0$
and  if
$$ \lambda_{n+1} := \inf \{ s>1 \,:\, (X_n + C_\eta (s)) \cap \Xi \neq \emptyset \} $$
(we emphasize the fact that we require that $s>1$), we set $X_{n+1} = X_n +\lambda_{n+1} u + W_{n+1}$ with  $W_{n+1} \in H$ and such that $X_{n+1}= (X_n+ C_\eta (\lambda_{n+1})) \cap \Xi$. We define $S_n = \sum_{i=1}^n \lambda_i $ and $V_n=\sum_{i=1}^n W_i$. By the properties of Poisson point processes, $(\lambda_i)_{i\ge 1}$ and $(W_i)_{i\ge 1}$ are i.i.d. random variables and since $M_\eta (u)$ is a symmetric set, $W_1$ has also a symmetric distribution and so $V_n$ is a symmetric random walk.

Let us now define, for $s>0$,
$$ Z(s) := \sup \{ n\geq 0 \,:\, S_n \leq s \} $$
and consider the path $\pi_s = (0,X_1,\dots, X_{Z(s)}, su)$. Let us find an upper bound of $\tilde T_\varepsilon (\pi_s)$ by finding an upper bound of its length and a lower bound of $Z(s)$, its number of points.

The sequence $(\lambda_i)_{i\geq 1}$ is  i.i.d.  and we have for $t \ge 1$,
\begin{align*}
\mathbb{P} (\lambda_{1} \geq t  ) & =  \exp \left(- \frac{1}{d \| u^\star\|_2}| M_\eta  (u)|  (t^d-1) \right)
\end{align*}
so that
\begin{align*}
 \mathbb{E} (\lambda_1) & =1+\int_1^\infty \exp  \left(- \frac{1}{d \| u^\star\|_2}| M_\eta  (u)|  (t^d-1) \right) du \\ 
 & =
  1+ \left( \frac{ | M_\eta (u)|}{d \| u^\star\|_2}  \right)^{-1/d} \int_{( | M_\eta (u) | /\| u^\star\|_2 )^{1/d}}^\infty \exp (-v^d) dv := 1 +  \frac{| M_\eta (u)|^{-1/d} }{2 c'_u}
  \end{align*}
where 
$$ 2 c'_u :=   \left( \frac{1}{d \| u^\star\|_2}  \right)^{1/d}  \left( \int_{( | M_\eta (u) | /\| u^\star\|_2 )^{1/d}}^\infty \exp (-v^d) dv \right)^{-1} \,.$$
Using that for any $\eta\in (0,1)$, $M_\eta (u) \subset B_N(0,3)$ we see that the  $(d-1)$-dimensional volume $|M_\eta (u)|$ of $M_\eta (u)$ is bounded by some constant $c_1'$ (depending only on $d$ and $N$) for $\eta\in (0,1)$. By Lemma \ref{lem:CDV}, we also know that $c\leq \| u^\star\|_2 \leq c'$ for constants $c,c' \in (0,\infty)$ (depending only on $d$ and $N$). We deduce that there exist some constants $c_2,c_3>0$ (depending only on $d$ and $N$ also) such that for $\eta\in (0,1)$,
$$ c_2| M_\eta | ^{-1/d}\le \mathbb{E} (\lambda_1) \le c_3| M_\eta | ^{-1/d} \,.$$
Since $S_n = \sum_{i=1}^n \lambda_i $,  a standard renewal theorem yields that, setting $c_4=1/(2c_3)$ and $c_5=2/c_2$,  for $s$ large enough, we have a.s.
\begin{equation}
\label{e:ajout2}
 c_4 |M_\eta |^{1/d} s \leq Z(s) \leq c_5 |M_\eta |^{1/d} s \,.
 \end{equation}
This gives a lower bound for the number of points in $\pi_s$. The rest of the proof of Proposition \ref{prop:gloutonespace} is a copy of the proof of Proposition \ref{t:thm8} with Equation \eqref{e:ajout2} instead of \eqref{e:ajout3}.
\end{proof}

\subsection{Upper bound on $N (\tilde \gamma_{\varepsilon} (su)) $}
\label{s:upbis}

To complete the proof of Theorem \ref{theo:FPP}, it remains to prove $(ii)$,{\it i.e.}, the existence of a geodesic $\tilde \gamma_{\varepsilon} (su) \in \hat \Pi (0,su)$ from $0$ to $su$ for the time $\tilde T_\varepsilon (0,su)$ such that $N(\tilde \gamma_{\varepsilon} (su) )$ is upper bounded. In fact, we will choose our geodesic in $ \check \Pi_\varepsilon (0,su)$. By Proposition \ref{prop:geodes}, there exists $\check \gamma_{\varepsilon} (su) \in \check \Pi_\varepsilon (0,su)$ that is a geodesic from $0$ to $su$ for $\tilde T_\varepsilon (0,su)$.
Since $\check \gamma_\varepsilon (su) \in \check \Pi_\varepsilon (0,su)$,  Equation \eqref{e:pointFPP}  states that
$$
N(\check \gamma_\varepsilon (su)\cap \Sigma_\varepsilon)\le \varepsilon^{1/d} \sharp \check\gamma_\varepsilon (su) $$
which gives that 
$$
N(\check \gamma_\varepsilon (su))\le \tilde T(\check \gamma_\varepsilon (su))+  \varepsilon^{1/d} \sharp \check\gamma_\varepsilon (su)\le s+\varepsilon^{1/d}  \sharp \check\gamma_\varepsilon (su).$$
One can check that the proof of \eqref{eq:upperboundgeodesique} (which gives an upper bound for the length and the number of points of the geodesic in the model with rewards) only uses a similar inequality between $N(\gamma_\varepsilon (su))$ and  $\sharp \gamma_\varepsilon (su)$. Mimicking this proof, we can so also obtain  that, for any $\delta>1$,
\begin{equation}
\label{e:ajoutfev}
N(\check\gamma_\varepsilon (su))\le s(1+\delta g_u(C_2\varepsilon^{-1})) \textrm{ and } \sharp\check\gamma_\varepsilon (su) \leq \delta g_u(C_2\varepsilon^{-1}) \varepsilon^{-1/d} s. 
\end{equation}


\section{$p$-norm}
\label{s:Np}

In this section, we suppose that $N$ is the $p$-norm for some $p\in [1,\infty]$. We put a subscript $p$ in our notations to emphasize the dependence on $p$. For a given $u\in \mathbb{R}^d$, we recall that the definition of $d_i(u)$, $i\in \{1,\dots, 4\}$ is given in \eqref{e:ajout4}. For a given $p\in [1,\infty]$ the definition of $\kappa_p(u)$ is given in \eqref{e:ajout5}.

This section is organized as follows. First in Section \ref{s:H} we specify for each $p\in [1,\infty]$ and each $u\in \mathbb{R}^d$ such that $\| u \|_p =1$ what hyperplane $u+H$ we consider as a supporting hyperplane of $B_p (1)$ at $u$, and fix some notations. In this setting, we evaluate $|K_\eta (u)|$ in Section \ref{s:evalK}. This must be done separately for $p \in (1,\infty)$, $p=1$ and $p=+\infty$, since the estimations are based on a proper rescaling of $K_\eta (u)$ which is not exactly of the same nature in those different situations. Then in Section \ref{s:evalM} we compare $|M_\eta (u)|$ to $|K_\eta (u)|$. This allows us to apply the results proved for a general norm $N$ to the $p$-norm in Section \ref{s:appli} to prove Theorem \ref{theo:tilde_mu_p} and the analog for the model with rewards, namely Theorem  \ref{theo:mu_p} . Finally in Section \ref{s:monotonie} we prove Theorem \ref{theo:monotonie}, {\it i.e.}, the existence of $\lim_{\varepsilon \rightarrow 0} (1- \tilde \mu_\varepsilon (u))/ \varepsilon^{\kappa_p(u)}$, by an argument of monotonicity, at least for some $p$ and $u$.

\subsection{Choice and description of the supporting hyperplane $u+H$}
\label{s:H}

We have to deal separately with the cases $p\in (1,\infty)$, $p=1$ and $p=\infty$. From now on, until the end of Section \ref{s:Np}, we always consider that the hyperplane $H$ we work with is the one described here.

\paragraph{Case $p\in (1,\infty)$.} Consider $N=\| \cdot \|_p$ for some $p\in  (1,\infty)$. Let $u\in \mathbb{R}^d$ be such that $\| u \|_p=1$. Let $d_1 := d_1 (u) $ be the number of coordinates of $u$ that are not null. By invariance of the model under symmetries along the hyperplanes of coordinates and by permutations of coordinates, we can suppose that $u = (u_1, \dots, u_{d_1}, 0 , \dots , 0)$ with $u_i > 0$ for $i\in \{1,\dots , d_1\}$. The ball $B_p(1)$ is strictly convex, and the (unique) supporting hyperplane $u+H$ of $B_p (1)$ at $u$ is given by
$$H:=\{v\in \R^d \,:\, v\cdot u^\star =0\}$$
where $u^\star=(u_1^{p-1},\ldots,u_{d_1}^{p-1},0,\ldots,0)$ satisfies $u\cdot u^\star = \sum_{i=1}^{d_1} u_i ^p = 1$ ($u^\star$ is the dual vector of $u$). Note that we have $H=H_1\oplus H_2$ where $H_1\subset \mbox{Vect}(e_1,\ldots,e_{d_1})$ and 
$H_2=\mbox{Vect}(e_{d_1+1},\ldots,e_{d})$, for $(e_1,\dots , e_d)$ the canonical orthonormal basis of $\mathbb{R}^d$. For $v\in H$, we write $v=v_1+v_2$ with $v_1 = (v_{1,1}, \dots , v_{1,d_1} , 0 , \dots , 0)\in H_1$ and $v_2 = (0 \dots , 0 , v_{2,d_1 + 1}, \dots , v_{2,d})\in H_2$.

\paragraph{Case $p=1$.} Consider $N= \| \cdot \|_1$ and let $u\in \mathbb{R}^d$ be such that $\| u \|_1 = 1$. We can suppose that $u=(u_1,\ldots,u_{d_1},0,\ldots,0)$ with 
$u_i>0$ for all $i\in \{1,\ldots , d_1\}$ with $d_1 = d_1 (u)$. In this case, a supporting hyperplane $u+H$ of $B_1(1)$ at $u$ is given by
$$H:=\{v\in \R^d \,:\, v\cdot u^\star =0\}$$
where $u^\star=(1,\ldots,1,0,\ldots,0)$ satisfies $u \cdot u^\star = \sum_{i=1}^{d_1} u_i = 1$. We emphasize the fact that this hyperplane $u+H$ is not the unique supporting hyperplane of $B_1 (1)$ at $u$ if $d_1 < d$, but this is the most natural choice of supporting hyperplane since it is the most symmetric. As for $p\in (1,\infty)$, note that we have $H=H_1\oplus H_2$ where $H_1\subset \mbox{Vect}(e_1,\ldots,e_{d_1})$ and 
$H_2=\mbox{Vect}(e_{d_1+1},\ldots,e_{d})$, for $(e_1,\dots , e_d)$ the canonical orthonormal basis of $\mathbb{R}^d$. For $v\in H$, we write $v_1 = (v_{1,1}, \dots , v_{1,d_1} , 0 , \dots , 0)\in H_1$ and $v_2 = (0 \dots , 0 , v_{2,d_1 + 1}, \dots , v_{2,d})\in H_2$.

\paragraph{Case $p=\infty$.} Consider $N= \| \cdot \|_\infty$ and let $u\in \mathbb{R}^d$ be such that $\| u \|_\infty = 1$. We can suppose that $u=(1,\ldots, 1, u_{d_3+1},\ldots,u_d)$ with 
$u_i\in [0,1)$ for all $i\in \{d_3 +1,\ldots , d\}$ with $d_3 : = d_3 (u)$. In this case, a supporting hyperplane $u+H$ of $B_\infty (1)$ at $u$ is given by
$$H:=\{v\in \R^d \,:\, v\cdot u^\star =0\}$$
where $u^\star=(1/d_3,\ldots,1/d_3,0,\ldots,0)$ (with $d_3$ non null coordinates) satisfies $u \cdot u^\star = \sum_{i=1}^{d_3} 1 / d_3 = 1$. We emphasize the fact that this hyperplane $u+H$ is not the unique supporting hyperplane of $B_\infty (1)$ at $u$ if $d_3>1$, but this is the most natural choice of supporting hyperplane since it is the most symmetric. Note that we have $H=H_3\oplus H_4$ where $H_3\subset \mbox{Vect}(e_1,\ldots,e_{d_3})$ and 
$H_4=\mbox{Vect}(e_{d_3+1},\ldots,e_{d})$, for $(e_1,\dots , e_d)$ the canonical orthonormal basis of $\mathbb{R}^d$. For $v\in H$, we write $v_3 = (v_{3,1}, \dots , v_{3,d_3} , 0 , \dots , 0)\in H_3$ and $v_4 = (0 \dots , 0 , v_{4,d_3 + 1}, \dots , v_{4,d})\in H_4$.

\subsection{Evaluation of $|K_\eta (u)|$}
\label{s:evalK}

The aim of this section is to prove the following estimate of $|K_\eta (u)|$.
\begin{prop}
\label{prop:K} 
Consider $N=\| \cdot \|_p$ for some $p\in  [1,\infty]$. Let $u\in \mathbb{R}^d$ be such that $\| u \|_p=1$ and let $u+H$ be the supporting hyperplane of $B_p(1)$ at $u$ as defined in Section \ref{s:H}. There exist two constants $A_1:=A_1(p,d,u)$, $A_2:=A_2(p,d,u)$, such that for any $\eta \in (0,1]$, we have
$$A_1 \eta^{  \gamma_p (u)}\le |K_\eta (u)|  \le A_2 \eta^{ \gamma_p (u)} $$
with
\begin{equation}
\label{e:defgamma}
 \gamma_p (u) := \left\{ \begin{array}{ll} \frac{d_1(u) -1}{2} + \frac{d_2(u)}{p} & \textrm{ if } p\in (1,\infty) \,, \\  d_2 (u) = d-d_1 (u) & \textrm{ if } p=1\,, \\  d_3 (u) - 1 = d - (d_4 (u) + 1) & \textrm{ if } p=\infty \,. \end{array} \right.
 \end{equation}
\end{prop}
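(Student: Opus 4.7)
The plan is to handle the three regimes $p \in (1,\infty)$, $p = 1$, and $p = \infty$ separately, using in each case the explicit decomposition of $H$ from Section~\ref{s:H} and writing $v = v_1 + v_2$ (or $v = v_3 + v_4$) to split the constraint $\|u+v\|_p \le 1+\eta$ into independent conditions on the two summands. In each case I would identify the natural scale at which each factor lives: if a $k$-dimensional factor must be of size $\eta^{\alpha}$ after rescaling to a bounded region of positive volume, then its contribution is of order $\eta^{k\alpha}$, and Fubini in $H_1 \times H_2$ (resp.~$H_3 \times H_4$) gives the combined exponent. Since $K_\eta(u) \subset K_1(u)$ is bounded uniformly in $\eta \in (0,1]$, only local estimates around $v = 0$ are needed.

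For $p \in (1,\infty)$, I would Taylor expand
\[
\|u+v\|_p^p \;=\; \sum_{i=1}^{d_1}(u_i + v_{1,i})^p \;+\; \sum_{i=d_1+1}^{d} |v_{2,i}|^p
\]
around $v_1 = 0$. The linear term equals $p\,v_1\cdot u^\star = 0$ since $v_1 \in H$, leaving a positive-definite quadratic form $Q(v_1) := \tfrac{p(p-1)}{2}\sum_{i=1}^{d_1} u_i^{p-2} v_{1,i}^2$ on $H_1$, plus $\|v_2\|_p^p$ and a cubic remainder. Comparing with $(1+\eta)^p = 1 + p\eta + O(\eta^2)$, the constraint becomes equivalent (up to multiplicative constants) to $Q(v_1) + \|v_2\|_p^p \lesssim \eta$. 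The change of variables $v_1 = \eta^{1/2} w_1$, $v_2 = \eta^{1/p} w_2$ has Jacobian $\eta^{(d_1-1)/2 + d_2/p}$ and turns this into a fixed compact region of positive finite measure in $w$ (noting $\dim H_1 = d_1 - 1$ and $\dim H_2 = d_2$). The cubic remainder is $O(\eta^{3/2}) = o(\eta)$ after rescaling, so this yields matching upper and lower bounds.

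For $p = 1$, the key point is that for $v_1 \in H_1$ with $\max_i |v_{1,i}| < \min_i u_i$, all $u_i + v_{1,i}$ remain positive and
\[
\sum_{i=1}^{d_1}|u_i + v_{1,i}| \;=\; \sum_{i=1}^{d_1}(u_i + v_{1,i}) \;=\; 1 + v_1\cdot u^\star \;=\; 1,
\]
so the constraint reduces exactly to $\|v_2\|_1 \le \eta$. A sign analysis shows that outside this region, $\|u+v_1\|_1 > 1$, and in fact $\|u+v_1\|_1 - 1 = 2\sum_{i\in I^-}|u_i + v_{1,i}|$ where $I^-$ is the set of negative indices. Consequently $K_\eta(u)$ is (up to a lower-order contribution) the product of a fixed neighborhood of $0$ in $H_1$ with a $\|\cdot\|_1$-ball of radius $\eta$ in $H_2$; Fubini gives a volume of order $\eta^{d_2}$.

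For $p = \infty$, the constraint splits into $|1 + v_{3,i}| \le 1+\eta$ for $i \le d_3$ and $|u_i + v_{4,i}| \le 1+\eta$ for $i > d_3$, with the linear constraint $\sum_{i=1}^{d_3} v_{3,i} = 0$ coming from $v \in H$. On $H_4$, each $v_{4,i}$ ranges in an interval of length bounded between positive constants for $\eta \in (0,1]$, contributing a factor of order $1$. On $H_3$, substituting $w_i := \eta - v_{3,i}$ converts the constraints to $w_i \in [0, 2+2\eta]$ and $\sum w_i = d_3\eta$, which defines a $(d_3-1)$-dimensional simplex of volume of order $\eta^{d_3-1}$ (the upper bounds $w_i \le 2+2\eta$ being inactive for small $\eta$ since $\sum w_i = d_3\eta$ forces each $w_i \le d_3\eta$). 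Combining the two factors gives the exponent $d_3 - 1$. The main obstacle in all three cases is controlling the error in the local analysis so that the rescaled estimates extend to the whole of $K_\eta(u)$; for $p \in (1,\infty)$ this is a matter of Taylor control, while for $p \in \{1,\infty\}$ it is handled by the explicit sign/simplex computations above.
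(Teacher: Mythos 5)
Your proposal follows essentially the same approach as the paper's proof: identify the correct anisotropic rescaling of $H$ (by $\eta^{1/2}$ on $H_1$ and $\eta^{1/p}$ on $H_2$ when $p\in(1,\infty)$, by $\eta$ on $H_2$ when $p=1$, by $\eta$ on $H_3$ when $p=\infty$), observe that the Jacobian gives the exponent $\gamma_p(u)$, and control the rescaled set. The paper's Lemmas~\ref{Lemme:normp}, \ref{Lemme:norm1}, \ref{Lemme:norminfty} formalize your rescaling by showing the rescaled region $\hat K_\eta(u)$ is sandwiched between two fixed Euclidean balls in $H$, which is a slightly slicker way to avoid tracking the ``lower-order'' error terms you invoke (your product/Fubini picture for $p\in\{1,\infty\}$ is not exact, but the sandwich bound makes this irrelevant); otherwise the Taylor-expansion argument for $p\in(1,\infty)$, the sign analysis for $p=1$, and the simplex identification for $p=\infty$ are precisely the computations carried out in those lemmas.
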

The proof of Proposition \ref{prop:K} is based on the simple idea that $K_\eta (u)$, when properly rescaled with $\eta$, looks roughly like a Euclidean ball in $H$. However, the good rescaling depends on $p$, and has to be done in a specific way for $p=1$ and $p=\infty$.

\subsubsection{$p$-norm with $p\in(1,\infty)$}
\label{s:evalKp}

For a given vector $u\in \mathbb{R}^d$ such that $\|u\|_p=1$, we recall that the hyperplane $H$ we consider, and its description as $H=H_1\oplus H_2$, are given in Section \ref{s:H}. The good rescaling in $\eta$ in this case is of order $\eta^{1/2}$ in $H_1$ whereas it is of order $\eta^{1/p}$ in $H_2$. We make it clear in the following lemma.
\begin{lemma}\label{Lemme:normp}Let  $p\in (1,\infty)$, let $u\in \mathbb{R}^d$ such that $\|u\|_p=1$ and the corresponding hyperplane $H$ as defined in Section \ref{s:H}, and let
$$\hat{K}_\eta(u):=\{v\in H \,:\, \|u+\eta^{1/2}v_1+\eta^{1/p} v_2 \|_p^p \le 1+\eta\} \,.$$
Then, there exist $\eta_0>0$, $R_1>R_0>0$ (depending on $p$, $d$ and $u$) such that, for all $\eta\in (0,\eta_0)$,
$$B_H(R_0)\subset \hat{K}_\eta(u) \subset B_H(R_1)$$
where $B_H(r) := \{ v\in H \,:\, \| v\|_2 \leq r \}$.
\end{lemma}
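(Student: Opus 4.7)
The plan is to show that the rescaled functional
\begin{equation*}
F_\eta(v) := \eta^{-1}\bigl(\|u + \eta^{1/2} v_1 + \eta^{1/p} v_2\|_p^p - 1\bigr),\quad v\in H,
\end{equation*}
converges, as $\eta \to 0^+$, uniformly on every Euclidean ball of $H$, to the quadratic--plus--$p$-th-power functional
\begin{equation*}
Q(v) := \frac{p(p-1)}{2} \sum_{i=1}^{d_1} u_i^{p-2} v_{1,i}^2 + \|v_2\|_p^p.
\end{equation*}
Since $\hat{K}_\eta(u) = \{v \in H : F_\eta(v) \le 1\}$, it then suffices to sandwich a suitable sublevel set of $Q$ between Euclidean balls of $H$ and to control $F_\eta$ at points of large Euclidean norm.

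To derive the expansion, I would fix $v\in H$ with $\|v\|_2\le R$ and look at $g_v(t):=\sum_{i=1}^{d_1}(u_i + t v_{1,i})^p$. Since $u_i>0$ for $i\le d_1$, for $\eta$ small enough (depending on $R$ and $u$) the function $g_v$ is smooth on $[0,\eta^{1/2}]$. One checks $g_v(0) = \|u\|_p^p=1$, $g_v'(0) = p\, u^\star\cdot v_1 = 0$ (since $v_1\in H$), and $g_v''(0) = p(p-1)\sum_{i=1}^{d_1} u_i^{p-2} v_{1,i}^2$. Because the supports of $v_1$ and $v_2$ are disjoint, one has $\|u + \eta^{1/2}v_1 + \eta^{1/p}v_2\|_p^p = g_v(\eta^{1/2}) + \eta\|v_2\|_p^p$, and a Taylor expansion of $g_v$ with an explicit bound on $g_v'''$ (uniformly bounded on $\{\|v\|_2\le R\}\times [0,\eta^{1/2}]$) yields
\begin{equation*}
F_\eta(v) = Q(v) + O(\eta^{1/2})\quad\text{uniformly on } \{v\in H\,:\,\|v\|_2\le R\}.
\end{equation*}

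The function $Q$ is continuous on $H$, vanishes only at $0$, and satisfies $Q(v)\to+\infty$ as $\|v\|_2\to\infty$; hence there exist radii $R_0<R_1$ with $Q\le 1/2$ on $B_H(R_0)$ and $Q\ge 2$ on the sphere $\{v\in H:\|v\|_2=R_1\}$. By the uniform convergence above, for $\eta$ sufficiently small one has $F_\eta\le 1$ on $B_H(R_0)$ and $F_\eta\ge 3/2$ on that sphere. The first inequality immediately gives the inner inclusion $B_H(R_0)\subset \hat{K}_\eta(u)$.

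The main obstacle is the outer inclusion, since uniform convergence of $F_\eta$ to $Q$ holds only on compacts while $\hat{K}_\eta(u)$ has to be controlled at points of arbitrary norm. I would resolve this by convexity: the map $v\mapsto \|u+\eta^{1/2}v_1+\eta^{1/p}v_2\|_p^p$ is the composition of $\|\cdot\|_p^p$ with an affine map, hence is convex, and so is $F_\eta$; moreover $F_\eta(0)=0$. For any $v\in H$ with $\|v\|_2\ge R_1$, set $\lambda:=R_1/\|v\|_2\in(0,1]$ and $w:=\lambda v$, so $\|w\|_2=R_1$. Convexity and $F_\eta(0)=0$ give $F_\eta(w)\le\lambda F_\eta(v)$, hence
\begin{equation*}
F_\eta(v)\ge \frac{\|v\|_2}{R_1}\,F_\eta(w)\ge \frac{\|v\|_2}{R_1}\cdot\frac{3}{2}>1,
\end{equation*}
so $v\notin\hat{K}_\eta(u)$. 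This yields $\hat{K}_\eta(u)\subset B_H(R_1)$ and completes the sandwich for $\eta$ smaller than some $\eta_0=\eta_0(p,d,u)$.
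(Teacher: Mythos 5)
Your proof is correct, and it takes a genuinely different route from the paper's. The paper bounds each term $|u_i + \eta^{1/2}v_{1,i}|^p$ directly by the two globally valid polynomial estimates $|1+t|^p \le 1+pt+ct^2$ (for $|t|\le 1$, giving the inner inclusion for all $\eta\le 1$) and $|1+t|^p \ge 1+pt+\frac{p(p-1)}{2}t^2-c|t|^3$ (for all $t\in\R$, giving the outer inclusion after a connectedness argument). You instead package the rescaled constraint as $F_\eta(v)\le 1$, prove the uniform compact convergence $F_\eta\to Q$ where $Q$ is the limiting quadratic-plus-$p$-th-power functional, and then extract both inclusions geometrically: the inner one from $Q<1$ near the origin, the outer one from $Q>1$ on a large sphere together with convexity of $F_\eta$ and $F_\eta(0)=0$. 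The convexity step neatly replaces the paper's global lower bound on $|1+t|^p$ (whose uniform validity over $t\in\R$ requires a small argument) and also sidesteps the explicit connectedness argument, since you get a bound at every point of $H$ with $\|v\|_2\ge R_1$ directly; it also makes the limiting shape $\{Q\le 1\}$ visible, which is conceptually informative. The trade-off is that your inner inclusion is only established for $\eta<\eta_0$ rather than for all $\eta\le 1$ as in the paper, but the lemma's statement only needs the former.
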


\begin{proof}[Proof of Lemma \ref{Lemme:normp}]Let us find $R_0$ such that $B_H(R_0)\subset \hat{K}_\eta(u)$ for $\eta \in (0,1]$.
We have 
$$|1+t|^p=1+pt+ O(t^2) \,.$$
Thus, there exists $c>0$, such that, for all $t\in [-1,1]$, 
$$|1+t|^p\le 1+pt+ ct^2.$$
Let $R$ be small enough such that $R\le \min\{u_i, i\le d_1\}$ and $d R^p+ c R^2\sum_{i\le d_1} u_i^{p-2} \le 1$.
Let $v\in H$ be such that $\|v\|_{\infty}\le R$. Then
\begin{equation*}
\|u+\eta^{1/2}v_1+\eta^{1/p}v_2\|_p^p =\eta \|v_2\|_p^p+ \sum_{i\le d_1}|u_i+\eta^{1/2}v_{1,i}|^p \,.
\end{equation*}
If $\eta\le 1$, we have for all $1 \leq i \leq d_1$,
$$\left\| \frac{\eta^{1/2}v_{1,i}}{u_i}\right\| \le \frac{R}{u_i}\le 1\,,$$
hence we have
$$|u_i+\eta^{1/2}v_{1,i}|^p\le 
u_i^p+\eta^{1/2}p\, v_{1,i}\,u_i^{p-1}+\eta\, c\,v_{1,i}^2\,u_i^{p-2} \,.$$
Using that $v_1\cdot u^\star=0$ and $||u||_p=1$, we get
\begin{eqnarray*}
\|u+\eta^{1/2}v_1+\eta^{1/p}v_2\|_p^p  &\le& \eta d \|v_2\|_\infty^p+ 1+ \eta c\sum_{i\le d_1} v_{1,i}^2u_i^{p-2}\\
&\le &1+\eta \left(d R^p+ c R^2\sum_{i\le d_1} u_i^{p-2}\right)\\
&\le & 1+\eta
\end{eqnarray*}
which proves that $v\in \hat{K}_\eta(u)$. 
Using that $\|v\|_{2} \geq \|v\|_{\infty}$,
we get that $B_H(R)\subset \hat{K}_\eta(u)$.

Let us now find $\eta_0>0$ and $R_1>0$ such that $ \hat{K}_\eta(u)\subset B_H(R_1)$ for $\eta \in (0,\eta_0]$.
We have 
$$|1+t|^p=1+pt+\frac{p(p-1)}{2}t^2+ O(t^3) \,.$$
Thus, there exists $c>0$ such that, for all $t\in \R$, 
$$|1+t|^p\ge 1+pt+\frac{p(p-1)}{2}t^2- c|t|^3.$$
Let $R>0$ be such that 
$$\min \left( R^p, \frac{p(p-1)}{2}R^2\min_{i\le d_1} u_i^{p-2} \right) > 2$$
and let $\eta_0>0$ be such that $R^3\,c\,\eta_0^{1/2}\sum_{i\le d_1}u_i^{p-3}=1$.
Let $v\in H$ be such that $\|v\|_{\infty}= R$. 
We have, for all $1\leq i \leq d_1$,
$$|u_i+\eta^{1/2}v_{1,i}|^p\ge 
u_i^p+\eta^{1/2}\,p\,v_{1,i}\,u_i^{p-1}+\eta \,\frac{p(p-1)}{2}\,v_{1,i}^2 \,u_i^{p-2}-c\eta^{3/2}\,|v_{1,i}|^3\,u_i^{p-3} \,.$$
Using that $v_1\cdot u^\star=0$ and $\|u\|_p=1$, we get, for $\eta\in (0,\eta_0)$
\begin{eqnarray*}
\|u+\eta^{1/2}v_1+\eta^{1/p}v_2\|_p^p& \ge& \eta  \|v_2\|_p^p+ 1+ \eta\, \frac{p(p-1)}{2}\sum_{i\le d_1} v_{1,i}^2\, u_i^{p-2}-c\,\eta^{3/2}\sum_{i\le d_1}|v_{1,i}|^3\,u_i^{p-3}\\
&\ge &1+\eta \left( \|v_2\|_\infty^p+ \frac{p(p-1)}{2}\|v_1\|_\infty^2\min_{i\le d_1} u_i^{p-2}-\|v_1\|_\infty^3\,c\,\eta_0^{1/2}\sum_{i\le d_1}u_i^{p-3}\right)\\
&\ge &1+\eta \left( \|v_2\|_\infty^p+ \frac{p(p-1)}{2}\|v_1\|_\infty^2\min_{i\le d_1} u_i^{p-2}-R^3\,c\,\eta_0^{1/2}\sum_{i\le d_1}u_i^{p-3}\right)\\
&> & 1+\eta
\end{eqnarray*}
using that $R= \|v\|_\infty=\max( \|v_1\|_\infty, \|v_2\|_\infty)$.
Hence, for $\eta\in (0,\eta_0)$, we get that  $\hat{K}_\eta(u) \cap \{v\in H\,:\, \|v\|_\infty=R\}=\emptyset$. Since $\hat{K}_\eta$ is connected and contains $0$, we deduce that $\hat{K}_\eta(u) \subset  \{v\in H \,:\, \|v\|_\infty<R\}$ for  $\eta\in (0,\eta_0)$. Using that $\|v\|_{2}\le \sqrt{d}\, \|v\|_{\infty}$, it implies that  $ \hat{K}_\eta(u)\subset B_H(\sqrt{d} R)$.
\end{proof}

\begin{proof}[Proof of Proposition \ref{prop:K}, case $p\in (1,\infty)$] 
Recall that 
\begin{eqnarray*}
{K}_{\hat\eta}(u)&=&\{v\in H \,:\, \| u+v \|_p \le 1+\hat\eta\}\\
\hat{K}_\eta(u)&=&\{v\in H \,:\, \| u+\eta^{1/2}v_1+\eta^{1/p}v_2 \|_p^p \le 1+\eta\}\, ,
\end{eqnarray*}
thus
$$v=v_1+v_2\in \hat{K}_\eta(u) \Longleftrightarrow \eta^{1/2}v_1+\eta^{1/p}v_2\in K_{\hat{\eta}}(u) \mbox{ with } \hat{\eta}:=(1+\eta)^{1/p}-1 \,.$$
Hence 
$$|\hat{K}_{\eta}(u)|=|K_{\hat{\eta}}(u)|\, \eta^{-\gamma_p (u)}$$
with 
$$\gamma_p (u):=\frac{d_1(u)-1}{2}+\frac{d_2}{p}\,.$$
Lemma \ref{Lemme:normp} implies that there exists constants $A'_1 (p,d,u)$, $A'_2 (p,d,u)$ such that, for $\eta$ small enough, 
$$ A'_1 \leq | \hat K_\eta (u)| \leq A'_2 $$
thus 
$$ A'_1\eta^{\gamma_p (u)}  \leq |  K_{\hat \eta} (u)| \leq A'_2 \eta^{\gamma_p (u)} \,.$$
Since $\hat{\eta}\sim \eta/p$, we also get the existence of constants $A''_1 (p,d,u)$, $A''_2 (p,d,u)$ such that, for $\hat \eta \leq \hat \eta_0$ small enough,
$$ A''_1 \hat \eta^{\gamma_p (u)}  \leq |  K_{\hat \eta} (u)| \leq A''_2 \hat \eta^{\gamma_p (u)} \,.$$
Since $\eta \mapsto \eta^{\gamma_p (u)}$ is bounded away from $0$ and $\infty$ on $[\hat \eta_0 , 1]$, we obtain the existence of constants $A_1 (p,d,u)$, $A_2 (p,d,u)$ such that, for every $ \eta\in (0,1]$,
$$ A_1 \eta^{\gamma_p (u)}  \leq |  K_{\eta} (u)| \leq A''_2 \eta^{\gamma_p (u)} \,.$$
\end{proof}

\subsubsection{$1$-norm}

For a given vector $u\in \mathbb{R}^d$ such that $\|u\|_1=1$, we recall that the hyperplane $H$, and its description as $H=H_1\oplus H_2$, are given in Section \ref{s:H}. The good rescaling in $\eta$ is now of order $1$ in $H_1$ whereas it is of order $\eta$ in $H_2$. We make it clear in the following lemma.
\begin{lemma}\label{Lemme:norm1}Let $u\in \mathbb{R}^d$ such that $\|u\|_1=1$ and the corresponding hyperplane $H$ as defined in Section \ref{s:H}, and let
$$\hat{K}_\eta(u):=\{v\in H \,:\, \|u+v_1+\eta\, v_2 \|_1 \le 1+\eta\} \,.$$
Then, there exist $R_1>R_0>0$ (depending on $d$ and $u$) such that, for all $\eta\in (0,1]$,
$$B_H(R_0)\subset \hat{K}_\eta(u) \subset B_H(R_1)$$
where $B_H(r) := \{ v\in H \,:\, \| v\|_2 \leq r \}$.
\end{lemma}

\begin{proof}[Proof of Lemma \ref{Lemme:norm1}] As for the $p$-norm, it is sufficient to prove that for $\|v\|_{\infty}$ small enough, we have $v\in \hat{K}_\eta(u)$, and for $\|v\|_{\infty}=3$ for instance, we have $v\notin \hat{K}_\eta(u)$. We have
$$\| u+v_1+\eta \,v_2\|_1= \|u+v_1 \|_1+\|\eta v_2\|_1=\sum_{i\le d_1} |u_i+v_{1,i}|+ \eta \|v_2\|_1\,.$$
If $\|v\|_{\infty}\le \min(u_i,i\le d_1)\wedge (1/d)$, we have $|u_i+v_{1,i}|=u_i+v_{1,i}$ for all $1 \leq i \leq d_1$, thus using that $\|u \|_1=1$ and $v_1\in H$ thus $\sum_{i=1}^{d_1} v_{1,i} =0$, we get 
$$\|u+v_1+\eta v_2 \|_1= 1 + \eta \|v_2\|_1\le 1 + \eta \,,$$
thus $v\in \hat K_\eta (u)$.

Assume now that $\|v\|_{\infty} = 3$. Either $\|v_1\|_{\infty}= 3$: then,
since $\|u+v_1\|_1\ge \|v_1\|_{\infty} -\|u\|_{\infty}\ge 2$, we get 
$$\|u+v_1+\eta\, v_2\| _1\ge 2. $$
Or $\| v_2 \|_{\infty}= 3$: then using that $v_1\in H$ and so $\|u+v_1\|_1\ge 1$, we get
 $$\|u+v_1+\eta v_2\|_1\ge 1+3\eta \,.$$
 In both cases, $v\notin \hat K_\eta (u)$.
\end{proof}

\begin{proof}[Proof of Proposition \ref{prop:K}, case $p=1$] It is similar to the proof in the case $p\in (1,\infty)$. We now have 
$$|\hat{K}_{\eta}(u)|=|K_{\eta}(u)|\eta^{-d_2(u)} := |K_{\eta}(u)|\eta^{-\gamma_1 (u)}$$
with $\gamma_1(u) := d_2 (u)$, which yields similarly to the existence of constants $A_1 (1,d,u)$, $A_2 (1,d,u)$ such that, for every $ \eta\in (0,1]$,
$$ A_1 \eta^{\gamma_1 (u)}  \leq |  K_{\eta} (u)| \leq A_2 \eta^{\gamma_1 (u)} \,.$$
\end{proof}

\subsubsection{$\infty$-norm}

For a given vector $u\in \mathbb{R}^d$ such that $\|u\|_\infty=1$, we recall that the hyperplane $H$, and its description as $H=H_3\oplus H_4$, are given in Section \ref{s:H}. The good rescaling in $\eta$ is now of order $\eta$ in $H_3$ whereas it is of order $1$ in $H_4$. We make it clear in the following lemma.
\begin{lemma}\label{Lemme:norminfty}Let $u\in \mathbb{R}^d$ such that $\|u\|_\infty =1$ and the corresponding hyperplane $H$ as defined in Section \ref{s:H}, and let
$$\hat{K}_\eta(u):=\{v\in H \,:\, \|u+ \eta \,v_3+  v_4 \|_\infty \le 1+\eta\} \,.$$
Then, there exist $R_1>R_0>0$ (depending on $d$ and $u$) such that, for all $\eta\in (0,1]$,
$$B_H(R_0)\subset \hat{K}_\eta(u) \subset B_H(R_1)$$
where $B_H(r) := \{ v\in H \,:\, \| v\|_2 \leq r \}$.
\end{lemma}

\begin{proof}[Proof of Lemma \ref{Lemme:norminfty}]If $\|v\|_{\infty}\le \min(1-u_i, i>d_3) $ and $v\in H$, then for all $i> d_3$ we have 
$$ | u_i + v_{4,i} | \leq u_i + |v_{4,i}| \leq u_i + \|v\|_{\infty} \leq 1 \,,$$
and for all $i\leq d_3$ we have
$$ | u_i + \eta v_{3,i} | \leq u_i + \eta |v_{3,i}| \leq  1+  \eta \|v\|_{\infty} \leq 1+\eta \,,$$
thus
$$\|u+\eta \,v_3+v_4 \|_{\infty} \leq  1+\eta\, ,$$
and so $v\in \hat{K}_\eta(u)$.

If $\|v\|_{\infty}= 2d$, either $\|v_4\|_{\infty}= 2d$ and then 
$$\|u+\eta v_3+v_4\|_{\infty} \ge \|v_4\|_{\infty} -1>1+\eta\,,$$
or $\|v_3\|_{\infty}= 2d$ and then since $\sum_{i=1}^{d_3} v_{3,i}=0$, this implies that there exists some index $i_0\le d_3$ such that $v_{3,i_0}\ge 2$, and then  
$$\| u+\eta v_3+v_4 \|_{\infty}\ge u_{i_0}+\eta \, v_{3,i_0}> 1+\eta \,.$$
In both cases, we conclude that $v\notin \hat K_\eta (u)$.
\end{proof}

\begin{proof}[Proof of Proposition \ref{prop:K}, case $p=\infty$]
In this case, we have 
$$|\hat{K}_{\eta}(u)|=|K_{\eta}(u)|\eta^{-(d_3-1)} := |K_{\eta}(u)|\eta^{-\gamma_\infty (u)}$$
with $\gamma_\infty(u) := d_3 (u)-1$, which yields similarly to the existence of constants $A_1 (\infty,d,u)$, $A_2 (\infty,d,u)$ such that, for every $ \eta\in (0,1]$,
$$ A_1 \eta^{\gamma_\infty (u)}  \leq |  K_{\eta} (u)| \leq A_2 \eta^{\gamma_\infty (u)} \,.$$
\end{proof}


\subsection{Evaluation of $|M_\eta (u)|$}
\label{s:evalM}

We now compare $|M_\eta (u)|$ with $|K_\eta(u)|$.

\begin{prop} 
\label{prop:M}
Consider $N=\| \cdot \|_p$ for some $p\in  [1,\infty]$. Let $u\in \mathbb{R}^d$ be such that $\| u \|_p=1$ and let $u+H$ be the supporting hyperplane of $B_p(1)$ at $u$ as defined in Section \ref{s:H}. There exists $\Delta = \Delta (p,d,u) >0$ such that, for all $\eta \in (0,1]$,
$$ \Delta |K_\eta(u)| \leq  |M_\eta (u)| \leq  |K_\eta(u)| \,.$$
\end{prop}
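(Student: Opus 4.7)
The upper bound $|M_\eta(u)| \leq |K_\eta(u)|$ is immediate from $M_\eta(u) \subset K_\eta(u)$, so I focus on the lower bound.

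My strategy is to exploit the linear rescalings of $K_\eta(u)$ already introduced in Section \ref{s:evalK}. In each of the three cases ($p\in(1,\infty)$, $p=1$, $p=\infty$) the relation between $K_{\hat\eta}(u)$ (or $K_\eta(u)$) and the normalized set $\hat K_\eta(u)$ is produced by a linear bijection $T_\eta:H\to H$ which scales the blocks $H_1,H_2$ (or $H_3,H_4$) by suitable powers of $\eta$. The crucial point is that each such $T_\eta$ is a \emph{symmetric} map, meaning $T_\eta(-v)=-T_\eta(v)$. Consequently $T_\eta(-\hat K_\eta(u)) = -K_{(\cdot)}(u)$, and since $T_\eta$ is a bijection,
\[
M_{(\cdot)}(u) \;=\; K_{(\cdot)}(u)\cap\bigl(-K_{(\cdot)}(u)\bigr) \;=\; T_\eta\!\left(\hat K_\eta(u)\cap(-\hat K_\eta(u))\right).
\]
Taking Lebesgue measure and dividing by $|K_{(\cdot)}(u)|=|\det T_\eta|\,|\hat K_\eta(u)|$ gives
\[
\frac{|M_{(\cdot)}(u)|}{|K_{(\cdot)}(u)|} \;=\; \frac{|\hat K_\eta(u)\cap(-\hat K_\eta(u))|}{|\hat K_\eta(u)|},
\]
so it suffices to bound the latter ratio from below uniformly in $\eta\in(0,\eta_0]$ for some $\eta_0>0$.

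This is where Lemmas~\ref{Lemme:normp}, \ref{Lemme:norm1}, \ref{Lemme:norminfty} come in: in each case one has constants $R_1>R_0>0$ and $\eta_0>0$ such that
\[
B_H(R_0)\subset \hat K_\eta(u)\subset B_H(R_1) \qquad \text{for all } \eta\in(0,\eta_0].
\]
The ball $B_H(R_0)$ is symmetric (invariant under $v\mapsto -v$), so $B_H(R_0)\subset -\hat K_\eta(u)$ as well, hence $B_H(R_0)\subset \hat K_\eta(u)\cap(-\hat K_\eta(u))$. Therefore
\[
\frac{|\hat K_\eta(u)\cap(-\hat K_\eta(u))|}{|\hat K_\eta(u)|}\;\geq\;\frac{|B_H(R_0)|}{|B_H(R_1)|}\;=\;\left(\frac{R_0}{R_1}\right)^{d-1}\;=:\;\Delta_0>0,
\]
which gives the desired lower bound for all $\eta\in(0,\eta_0]$ (in the case $p\in(1,\infty)$ the above controls $|M_{\hat\eta}|/|K_{\hat\eta}|$ with $\hat\eta=(1+\eta)^{1/p}-1$, but since $\hat\eta\sim\eta/p$ this translates to a bound on $|M_\eta|/|K_\eta|$ for $\eta$ in a suitable interval $(0,\eta_1]$).

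It remains to handle the compact range $\eta\in[\eta_1,1]$. The function $\eta\mapsto|K_\eta(u)|$ is continuous on $(0,\infty)$ (as noted above Proposition \ref{prop:minN}, since $\eta^d h_u(\eta)=|K_\eta(u)|$ and $h_u$ is continuous by Proposition \ref{prop:function_h}); the same continuity holds for $\eta\mapsto|M_\eta(u)|$ by the analogous argument applied to $\bar h_u$. Moreover $M_\eta(u)$ always contains an open neighborhood of $0$ in $H$ (since $\|u\|_p=1<1+\eta$), so $|M_\eta(u)|>0$ for every $\eta>0$. Thus the continuous positive function $\eta\mapsto |M_\eta(u)|/|K_\eta(u)|$ is bounded below by some constant $\Delta_1>0$ on the compact set $[\eta_1,1]$. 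Taking $\Delta:=\min(\Delta_0,\Delta_1)$ finishes the proof.

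The main (minor) obstacle is bookkeeping: checking that in each of the three cases the rescaling $T_\eta$ is genuinely symmetric and bijective, and handling the slight shift $\eta\leftrightarrow\hat\eta$ when $p\in(1,\infty)$. Everything else is a direct consequence of the sandwich estimates already proved.
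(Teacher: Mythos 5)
Your proof is correct and follows essentially the same route as the paper: apply the sandwich lemmas to $\hat K_\eta(u)$ and $\hat M_\eta(u) := \hat K_\eta(u) \cap (-\hat K_\eta(u))$, then transfer the ratio $|\hat M_\eta|/|\hat K_\eta|$ back to $|M_{\cdot}|/|K_{\cdot}|$ via the measure-scaling of the (odd, linear) rescaling map. One point in your favor: the paper's proof, as written, only establishes the bound for $\eta$ below some threshold (the range on which the sandwich holds, shifted by $\hat\eta \leftrightarrow \eta$ when $p\in(1,\infty)$), whereas your explicit continuity-plus-compactness argument on $[\eta_1,1]$ (using positivity of $|M_\eta(u)|$ and continuity of both $\eta\mapsto|K_\eta(u)|$ and $\eta\mapsto|M_\eta(u)|$ inherited from $h_u,\bar h_u$) closes this small gap and delivers the claim for all $\eta\in(0,1]$ as stated.
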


\begin{proof} We write the proof for $p\in (1,\infty)$ but it can be easily adapted to the cases $p=1$ and $p=\infty$.
Let $u=(u_1,\ldots,u_{d_1},0,\ldots,0)$ with $u_i>0$ and $\|u \|_p=1$,
$$H:=\{v\in \R^d \,:\, v\cdot u^\star =0\}$$
where $u^\star=(u_1^{p-1},\ldots,u_{d_1}^{p-1},0,\ldots,0)$ as given in Section \ref{s:H}.
Recall that Lemma \ref{Lemme:normp} states that if
$$\hat{K}_\eta (u) :=\{v\in H, ||u+\eta^{1/2}v_1+\eta^{1/p}v_2||_p^p \le 1+\eta\} \,,$$
then there exist $\eta_0>0$, $R_1>R_0>0$ such that, for all $\eta\in (0,\eta_0)$,
$$B_H(R_0)\subset \hat{K}_\eta (u) \subset B_H(R_1)$$
with $B_H (r) :=\{ v\in H \,:\, \| v \|_2 \leq r \}$. Define $\hat{M}_\eta (u):=\hat{K}_\eta (u) \cap (-\hat{K}_\eta (u))$, we also have 
$$B_H(R_0)\subset \hat{M}_\eta (u) \subset B_H(R_1)$$
and so 
$$\Delta \leq \frac{|\hat{M}_{\eta} (u)| }{|\hat{K}_{\eta} (u)|} \leq 1$$
for some $\Delta (p,d,u) >0$.
Using that a change of variable yields that $|K_{\hat \eta} (u)|=\eta^{\gamma_p(u)} |\hat{K}_{\eta} (u)|$ and $|M_{\hat \eta} (u)|=\eta^{\gamma_p(u)} |\tilde{M}_{\eta} (u)|$ 
with $\hat{\eta}:=(1+\eta)^{1/p}-1$, we also get 
$$\Delta \leq \frac{|{M}_{\eta} (u)| }{|{K}_{\eta} (u)|} \leq 1 \,.$$
\end{proof}


\subsection{Proof of Theorem \ref{theo:tilde_mu_p}}
\label{s:appli}

This section is devoted to the proof of Theorem \ref{theo:tilde_mu_p}, and the control \eqref{e:longueur} on the length of geodesics. We also state the analog of Theorem \ref{theo:tilde_mu_p} for $\mu_{p,\varepsilon}$, together with estimates on the associated geodesics $\gamma_{p,\varepsilon}(su)$, and we compare the behavior of $ \mu_{p,\varepsilon}(u)$ and $\tilde{\mu}_{p,\varepsilon}(u)$.

\begin{theorem}\label{theo:mu_p} For all $p\in[1,\infty]$, for all $u\in \mathbb{R}^d$ such that $\|u\|_p=1$, there exist constants $C_i = C_i (p,d,u) >0$, $i=1,\dots ,6$ such that the following assertions hold.
\begin{itemize}
\item[(i)]  For $\varepsilon$ small enough (depending on $p,d,u$), we have 
$$C_1\varepsilon^{\kappa_p(u)} \le 1- \mu_{p,\varepsilon}(u) \le C_2\varepsilon^{\kappa_p(u)}.$$
\item[(ii)] For $\varepsilon$ small enough (depending on $p,d,u$), we have a.s. for large $s$, for any geodesic $\gamma_{p,\varepsilon}(su) \in \Pi (0,su)$ from $0$ to $su$ for the time $T_{p,\varepsilon} (0,su)$,
\begin{eqnarray*}
 C_3\varepsilon^{\kappa_p(u) }  s\le & \varepsilon^{ \frac{1}{d}}\sharp \gamma_{p,\varepsilon}(su) & \le C_4\varepsilon^{\kappa_p(u) }  s\\
&  \| \gamma_{p,\varepsilon}(su) \|_p -s&  \le C_5\varepsilon^{\kappa_p(u)} s.
  \end{eqnarray*}
\item[(iii)] Moreover, except if $p=1$ and $d_1 (u) = d$, or if $p=\infty$ and $d_3 (u) =1 $, for $\varepsilon$ small enough (depending on $p,d,u$), we have a.s. for large $s$, for any geodesic $\gamma_{p,\varepsilon}(su) \in \Pi (0,su)$ from $0$ to $su$ for the time $T_{p,\varepsilon} (0,su)$,
$$C_6\varepsilon^{\kappa_p(u)} s  \le  \| \gamma_{p,\varepsilon}(su) \|_p -s  \,.$$
\item[(iv)] Finally we have
$$\lim_{\varepsilon\to 0}\frac{ \mu_{p,\varepsilon}(u)-\tilde{\mu}_{p,\varepsilon}(u)}{\varepsilon^{\kappa_p (u)}}=0.$$
\end{itemize}
\end{theorem}


\begin{proof} This is an application of Theorems \ref{theo:mu}, \ref{theo:FPP} and Proposition \ref{prop:minN}. Fix $p\in [1,\infty]$, $u\in \mathbb{R}^d$ such that $\| u\|_p =1$, $H$ as given in Section \ref{s:H} (thus $(u,H)$ satisfies \eqref{e:(u,H)}) and recall that the definition of $\gamma_p (u)$ is given in \eqref{e:defgamma}. By Proposition \ref{prop:K}, we know that there exist $A_1(p,d,u),A_2 (p,d,u)$ such that for any $\eta \in (0,1]$,
\begin{equation}
\label{e:fin0}
A_1 \eta^{  \gamma_p (u)}\le |K_\eta (u)|  \le A_2 \eta^{ \gamma_p (u)} \,.
\end{equation}
Recall that $h_u (\eta) = \eta^{-d} |K_\eta (u)|$, thus for any $\eta \in (0,1]$,
$$ A_1 \eta^{  \gamma_p (u) - d}\le h_u (\eta) \le A_2 \eta^{ \gamma_p (u) - d} \,.$$
By definition, $g_u = h_u ^{-1}$, thus the previous inequalities imply that for any $x$ large enough (such that $g_u(x) \leq 1$), we have
$$ B_1 x^{\frac{1}{\gamma_p(u) - d}} \leq g_u(x) \leq B_2  x^{\frac{1}{\gamma_p(u) - d}} $$
with $B_1 (p,d,u) = A_2^{\frac{-1}{\gamma_p(u) - d}}$ and $B_2 (p,d,u) = A_1^{\frac{-1}{\gamma_p(u) - d}}$. Applying these inequalities to $x = C \varepsilon^{-1}$ for some constant $C = C(d,p)$ we obtain that for $\varepsilon$ small enough,
$$ B_1' \varepsilon^{\frac{1}{d-\gamma_p(u) }} \leq g_u( C \varepsilon^{-1} ) \leq B'_2  \varepsilon^{\frac{1}{d-\gamma_p(u)}} $$
for some constants $B'_i = B'_i (p,d,u)$. Notice that by definition on $\gamma_p (u)$ (see \eqref{e:defgamma}) and $\kappa_p(u)$ (see \eqref{e:ajout5}), we have
$$ \kappa_p (u) = \frac{1}{d- \gamma_p (u)} \,, $$
thus we have just proved that for $\varepsilon $ small enough,
\begin{equation}
\label{e:fin1}
B_1' \varepsilon^{\kappa_p (u)} \leq g_u( C \varepsilon^{-1} ) \leq B'_2  \varepsilon^{\kappa_p(u)}\,.
\end{equation}
Similarly, combining Proposition \ref{prop:K} with Proposition \ref{prop:M}, and recalling that $\bar h_u (\eta) = \eta^{-d} |M_\eta (u)|$ and $\bar g_u = \bar h_u^{-1}$, we obtain the existence of $B''_i = B''_i (p,d,u)$, $i=1,2$, such that, for every $\varepsilon$ small enough,
\begin{equation}
\label{e:fin2}
B_1'' \varepsilon^{\kappa_p (u)} \leq \bar g_u( C \varepsilon^{-1} ) \leq B''_2  \varepsilon^{\kappa_p(u)}\,.
\end{equation}
Finally, recall that in Proposition \ref{prop:minN} we defined  $\ell_u : [0,\infty) \rightarrow [0,\infty)$ as the inverse function of $\hat \ell_u : \eta \mapsto |K_\eta (u)|$ when $|K_0(u)| =0$. First, since $K_0 (u) = \{ v\in H \,:\, \|u+v\|_p = \|u\|_p )\}$, the case $| K_0 (u)| \neq 0$ corresponds to the case where $u$ belongs to a $(d-1)$-dimensional flat edge of $B_p(1)$. For $p\in (1,\infty)$, $B_p(1)$ is strictly convex thus $|K_0(u)| =0$ for every direction $u$. For $p=1$, $|K_0(u)| = 0$ if and only if $d_1(u)<d$. For $p=\infty$, $|K_0(u)| = 0$ if and only if $d_3(u)\geq 2$. We suppose from now on that we are in one of those cases, {\it i.e.}, that $|K_0(u)| = 0$. In other words, we restrict ourselves to the cases where $\gamma_p(u) > 0$. Equation \eqref{e:fin0} tells us that for any $\eta \in (0,1]$,
$$ A_1 \eta^{  \gamma_p (u)}\le  \hat \ell_u (\eta)  \le A_2 \eta^{ \gamma_p (u)} \,.$$
This implies the existence of constants $\mathcal{A}_i (p,d,u)$, $i=1,2$, such that for every $x$ small enough (so that $\ell_u (x) \leq 1$), we have
$$ \mathcal A_1 x^{ \frac{1}{ \gamma_p (u)}}\le   \ell_u (x)  \le \mathcal A_2 x^{ \frac{1}{\gamma_p (u)}} \,.$$
In particular, for constants $C,C'$ depending on $d$ and $u$, if $x= C' \varepsilon^{-1} \bar g_u (C \varepsilon^{-1})^d$, we have for $\varepsilon$ small enough,
$$ x \geq C' \varepsilon^{-1}  ( B_1'' \varepsilon^{\kappa_p (u)})^d = \mathcal A_3 \varepsilon^{-1 + d \kappa_p(u)}  = \mathcal A_3 \varepsilon^{\kappa_p(u) \gamma_p(u)} $$
for a constant $\mathcal A_3 = \mathcal A _3 (p,d,u)$. Since $\ell_u$ is increasing, we get for $\varepsilon $ small enough that
\begin{equation}
\label{e:fin3}
\ell_u (x) \geq \ell_u (\mathcal A_3 \varepsilon^{-1 + d \kappa_p(u)}) \geq  \mathcal A_1 (\mathcal A_3 \varepsilon^{\kappa_p(u) \gamma_p(u)}) ^{ \frac{1}{ \gamma_p (u)}} = \mathcal A_4 \varepsilon^{\kappa_p(u)}
\end{equation}
for a constant $\mathcal A_4 = \mathcal A_4 (p,d,u)$. Combining Theorems \ref{theo:mu}, \ref{theo:FPP} and Proposition \ref{prop:minN} with Equations \eqref{e:fin1}, \eqref{e:fin2} and \eqref{e:fin3} ends the proof of Theorems \ref{theo:mu_p} and \ref{theo:tilde_mu_p}, and Equation \eqref{e:longueur}.
\end{proof}

\begin{remark}
\label{r:longueur}
It is not a surprise that our proof does not provide a lower bound for the $N$-length of the geodesic in any cases. Consider for instance the case $d=2$ and $N=\| \cdot \|_1$: choose $u\in \mathbb{R}^d$ satisfying $\|u\|_1=1$ and $d_1 (u) =2$, for instance $u=(1/2,1/2)$. If we consider only oriented paths from $0$ to $s u$, going successively vertically to the north or horizontally to the east, those paths have a $\| \cdot \|_1$-length equal to $s$. The maximal number $M_s$ of rewards, {\it i.e.}, of points of the Poisson point process $\Xi$, that such an oriented path from $0$ to $su$ can collect has been introduced and studied by Hammersley \cite{Hammersley}, and is known to be of order $1$ - in fact, this problem is even solvable, it was proved by Logan and Shepp and by Vershik and Kerov in 1977, and in a more probabilistic way by Aldous and Diaconis \cite{AldousDiaconis} in 1995, that $\lim_{s\rightarrow \infty} M_s / s =1$. Thus, by restricting ourselves to oriented paths from $0$ to $su$ (whose $\| \cdot \|_1$-length is $s$), it is already possible to obtain a quantity of rewards of order $s$, and thus a gain in time of order $\varepsilon^{1/d} s = \varepsilon^{1/d_1(u)}s$. Since our approach does not allow us to get something better than the order of $1-\mu_{1,\varepsilon} (u)$ and $\sharp \gamma_{1,\varepsilon} (su) $, we have no hope it allows us to exclude that  $\| \gamma_{1,\varepsilon} (su) \|_1$ could be $s$.
\end{remark}


\subsection{Monotonicity}
\label{s:monotonie}

Theorem \ref{theo:monotonie} is now a direct consequence of the following proposition.
\begin{prop}
\label{prop:monotonie}
Suppose we are in one of the following cases:
\begin{itemize}
\item[(i)] $p\in [1,\infty]$ and $u = (1,0,\dots ,0)$ ;
\item[(ii)] $p\in \{1,2\}$, whatever $u\in \mathbb{R}^d$ such that $\|u\|_p =1$ ;
\item[(iii)] $p=\infty$ and $u\in \mathbb{R}^d$ such that $\|u\|_\infty=1$ and $d_3 (u) \in \{1,2\}$.
\end{itemize}
Then the function $\frac{1-\mu_{p,\epsilon}(u)}{\varepsilon^{\kappa_p (u)}}$ increases with respect to $\varepsilon$.
\end{prop}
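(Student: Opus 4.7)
Writing $r := \varepsilon^{1/d}$ and using the rescaling recalled before Theorem~\ref{t:existence}, we may view $\mu_{p,\varepsilon}(u)$ as $\mu_{1,r}(u)$, the time constant when the intensity is fixed to $1$ and $r$ is the reward value at each point of $\Xi$. For every polygonal path $\pi$ the quantity $T_{1,r}(\pi) = \|\pi\|_p - r\,\sharp\pi$ is affine in $r$, so $\mu_{1,r}(u)$ is concave as an infimum of affine functions and $f(r) := 1 - \mu_{1,r}(u)$ is convex on $[0,\infty)$ with $f(0)=0$. Differentiating the infimum, $f'(r) = \lim_{s\to\infty}\sharp\gamma_{p,r^d}(su)/s$ at points of differentiability (which exhaust $[0,\infty)$ except for a countable set by convexity). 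Since $(1-\mu_{p,\varepsilon}(u))/\varepsilon^{\kappa_p(u)} = f(r)/r^{d\kappa_p(u)}$, the desired monotonicity is equivalent to the pointwise inequality $r f'(r) \ge d\kappa_p(u)\,f(r)$ for all small $r>0$; in words, the geodesic's asymptotic reward density must dominate its excess length per unit axial distance with exactly the exponent $d\kappa_p(u)$. For case~(ii) with $p=2$, the rotation-invariance of both the Euclidean norm and the Poisson process gives that $\mu_{2,\varepsilon}(u)$ depends only on $\|u\|_2=1$, so we reduce to $u=e_1$.

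For case~(i) with $u=e_1$ and arbitrary $p$, and for case~(iii) with $p=\infty$, $d_3(u)\in\{1,2\}$, I plan to compare the models at two values $\varepsilon_1<\varepsilon_2$ via the anisotropic linear map $\phi_c : (x_1,x_\perp)\mapsto (x_1, c^\beta x_\perp)$ acting on the transverse coordinates. The exponent $\beta=\beta(p,u)$ is chosen to match the transverse scale of the greedy cylinder in Section~\ref{s:glouton}: namely $\beta=1/2$ for $p\in(1,\infty)$ (cf.\ Lemma~\ref{Lemme:normp}) and $\beta=1$ for $p\in\{1,\infty\}$ (cf.\ Lemmas~\ref{Lemme:norm1},~\ref{Lemme:norminfty}); the dilation factor is $c := (\varepsilon_2/\varepsilon_1)^{\kappa_p(u)/\gamma_p(u)}$. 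The stabilizer of $u$ in the symmetry group of $(\mathbb{R}^d,\|\cdot\|_p)$ — the group $(\mathbb{Z}/2)^{d-1}\rtimes S_{d-1}$ in case~(i), its natural analogue in case~(iii) acting separately on the saturated and unsaturated coordinates — allows us to symmetrize a geodesic $\gamma_{p,\varepsilon_2}(su)$ so that its transverse extent is isotropic within the relevant subspace. Then $\phi_c(\gamma_{p,\varepsilon_2}(su))$ is a candidate path from $0$ to $su$ in the $\varepsilon_1$-model (the point $su$ lies on the axis fixed by $\phi_c$), and a thinning coupling of $\phi_c(\Xi)$ (intensity $c^{-\beta(d-1)}$) with a standard intensity-$1$ process lets us compare $\sharp\pi$ and $\|\pi\|_p$ in the two models. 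Dividing by $s$, letting $s\to\infty$, and using the exact exponent $\gamma_p(u)$ from Proposition~\ref{prop:K} converts this pathwise comparison into $h(\varepsilon_1)\le h(\varepsilon_2)$ with $h(\varepsilon) := (1-\mu_{p,\varepsilon}(u))/\varepsilon^{\kappa_p(u)}$.

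The main obstacle is that $\|\phi_c(v)\|_p = (|v_1|^p + c^{p\beta}\|v_\perp\|_p^p)^{1/p}$ is generically not of the form $c^\alpha\|v\|_p$, so the $p$-length does not transform by a single scalar factor under $\phi_c$ and the comparison of travel times must absorb anisotropic error terms. The three cases listed are precisely those in which this error can be matched to the leading behaviour $|K_\eta(u)| \sim A\eta^{\gamma_p(u)}$ of Proposition~\ref{prop:K}: full rotational symmetry absorbs all anisotropic error for $p=2$, the axis-aligned stabilizer for $u=e_1$ provides sufficient symmetry to couple the transverse and axial contributions with the right exponents (via Lemmas~\ref{Lemme:normp}--\ref{Lemme:norminfty}), and the simple flat face structure of $B_\infty(1)$ when $d_3(u)\in\{1,2\}$ likewise tames the expansion. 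The sub-leading corrections are of order $o(\|\gamma\|_p - s)$ and vanish after dividing by $s$ and taking $s\to\infty$, yielding the claimed monotonicity and hence (via convergence of a monotone bounded function) the existence of the limits in Theorem~\ref{theo:monotonie}.
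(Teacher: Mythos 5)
Your convexity observation is correct: for a polygonal path $\pi$, $T_{1,r}(\pi) = \|\pi\|_p - r\,\sharp\pi$ is affine in $r$, so $\mu_{1,r}(u)$ is a concave function of $r$ and $f(r)=1-\mu_{1,r}(u)$ is convex with $f(0)=0$. But this only gives that $f(r)/r$ is non-decreasing, and since $d\kappa_p(u) = d/(d-\gamma_p(u)) > 1$ in every case treated, the claim $h(r):=f(r)/r^{d\kappa_p(u)}$ non-decreasing is strictly stronger. Convexity plus the two-sided bounds $f(r)\asymp r^{d\kappa_p(u)}$ from Theorem~\ref{theo:mu_p} still does not force $h$ to be monotone, so this half of the argument cannot stand alone; you correctly flag this, but the second half you propose to lean on is the genuinely problematic part.

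The anisotropic comparison $\phi_c$ has two gaps you do not resolve. First, $\phi_c$ scales only the $d-1$ transverse coordinates, so $\det\phi_c = c^{\beta(d-1)}\neq 1$; the image of $\Xi$ is a Poisson process of the wrong intensity, and the thinning coupling you invoke produces a one-sided comparison of $\sharp\pi$ in exactly the direction you do not want when $c>1$ (the thinned process has \emph{fewer} points, so a geodesic for $\varepsilon_2$ maps to a path collecting at most as many rewards, which degrades rather than controls $\mu_{\varepsilon_1}$). Second, and more seriously, the quantity you need to control is not $\|\phi_c\pi\|_p$ itself but the \emph{excess length} $\|\phi_c\pi\|_p - s$, which is of the same order as the quantity you are bounding; calling the anisotropic error ``$o(\|\gamma\|_p - s)$'' is an assertion, not an estimate, and the symmetrization step (averaging over the stabilizer of $u$) does not obviously linearize the $p$-norm enough to produce it. You would in effect be re-proving the hard part of the proposition inside the error analysis.

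The paper's route avoids both issues by working with a \emph{single} $\varepsilon$-dependent linear map $\psi_\varepsilon$ that scales the axial direction by $\varepsilon^{-\kappa+1/d}$ \emph{and} the transverse directions by $\varepsilon^{-\kappa+1/d}\varepsilon^{\kappa/p}$ (for $p\in(1,\infty)$, $u=e_1$), with the exponents chosen so that $\det\psi_\varepsilon = 1$. Because $\psi_\varepsilon$ is volume-preserving, $\psi_\varepsilon^{-1}(\Xi)\overset{d}{=}\Xi$ exactly, no thinning needed; the change of variables then turns $(1-\mu_\varepsilon(u))/\varepsilon^\kappa$ into $\lim_s \sup_\pi \sum_i \frac{|\lambda_{y_i}|(\operatorname{sgn}(\lambda_{y_i}) - (1+\varepsilon^\kappa\|t_{y_i}\|_p^p)^{1/p}) + \varepsilon^\kappa}{s\varepsilon^\kappa}$, a representation in which $\varepsilon$ appears only through the scalar $\varepsilon^\kappa$ inside each summand, and the elementary Lemma~\ref{lemme:monotonie} (concavity of $x\mapsto(1+x)^{1-1/p}$) makes each summand non-decreasing in $\varepsilon$. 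Monotonicity of the sup and the limit is then automatic. This is an exact algebraic identity, not a two-$\varepsilon$ coupling, and it is precisely the volume preservation and the simultaneous axial/transverse rescaling that your $\phi_c$ lacks.
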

Indeed, Proposition \ref{prop:monotonie} implies that
$$  \lim_{\varepsilon \rightarrow 0} \frac{1- \mu_{p,\epsilon}(u)}{\varepsilon^{\kappa_p (u)}} $$
exists by monotonicity, and Theorem \ref{theo:mu_p} $(iv)$ implies that
$$ \lim_{\varepsilon \rightarrow 0} \frac{1- \tilde \mu_{p,\epsilon}(u)}{\varepsilon^{\kappa_p (u)}} =  \lim_{\varepsilon \rightarrow 0} \frac{1- \mu_{p,\epsilon}(u)}{\varepsilon^{\kappa_p (u)}} \,.$$

We prove first (i) for $p\in (1,\infty)$ which also implies, by isotropy, that (ii) holds for the Euclidean norm. We then prove (ii) for the $1$-norm and in the last section, we establish (iii).

\begin{remark}
\label{r:monotonie}
In the realm of application of Proposition \ref{prop:monotonie}, we obtain the monotonicity of $\varepsilon \mapsto \frac{1-\mu_{p,\epsilon}(u)}{\varepsilon^{\kappa_p(u)}}$ as a consequence of a much stronger property. Indeed, by a rescaling argument, we express $ \frac{1-\mu_{p,\epsilon}(u)}{\varepsilon^{\kappa_p(u)}}$ as 
$$ \frac{1-\mu_{p,\epsilon}(u)}{\varepsilon^{\kappa_p(u)}} = \min_{s\rightarrow \infty} \,\inf_{\pi=(x_0\dots , x_{q+1}) \in\Pi (0, su)}\, \sum_{i=1}^q f_{p,d,u,s,\pi,i} (\varepsilon) $$
for given functions $ f_{p,d,u, s,\pi,i}$. What we actually prove is the monotonicity of each one of those functions $ f_{p,d,u,s,\pi,i}$. The monotonicity of all the functions $ f_{p,d,u, s,\pi,i}$ is not true for every $p\in [1,\infty]$ and every $u\in \mathbb{R}^d$ such that $\|u\|_p =1$. However, it doesn't imply that the monotonicity of $\varepsilon \mapsto \frac{1-\mu_{p,\epsilon}(u)}{\varepsilon^{\kappa_p(u)}}$ is not true, only that our approach cannot work.
\end{remark}

\subsubsection{ $p$-norm with $p\in (1,\infty)$}
Consider the $p$-norm with $p\in (1,\infty)$ and the direction $e_1=(1,0,\ldots,0)$. We want to prove that the fraction $(1-\mu_{p,\epsilon}(e_1))/\varepsilon^{\kappa_p(e_1)}$ increases with respect to $\varepsilon$. To simplify  notations, we write 
$$\kappa := \kappa_p(e_1) = \frac{1}{d - \frac{d-1}{p}} = \frac{p}{pd - d + 1}  \,.$$ 
Let $H= \mbox{Vect}(e_2,\ldots,e_{d})$ and for $x\in \R^d\setminus H$, we write $x=\lambda_x(e_1+t_{x})$ with $\lambda_x\in \R$, $t_{x}\in H$.
For $\pi=(0,x_1,\ldots,x_q,se_1)$ a path from $0$ to $se_1$, let us define $y_i:=x_{i+1}-x_i$ with the convention $x_0=0$, $x_{q+1}=se_1$. 
Then, a.s., $$T_{p,\varepsilon}(\pi)=\sum_{i=0}^q \left( ||y_i||_p-\varepsilon^{1/d}\right)+\varepsilon^{1/d}$$
and so 
$$\mu_\varepsilon(e_1)=\lim_{s\to \infty} \inf_{\pi\in \Pi(0, su)} \frac{\sum_{i=0}^q \left( ||y_i||_p-\varepsilon^{1/d}\right)}{s}=\lim_{s\to \infty} \inf_{\pi\in \Pi(0, su)} \frac{\sum_{i=0}^q \left( |\lambda_{y_i}|(1+||t_{y_i}||^p_p)^{1/p}-\varepsilon^{1/d}\right)}{s}.$$
Using that $s=\sum \lambda_{y_i}$, we get 
\begin{align*}
\frac{1-\mu_\epsilon(e_1)}{\varepsilon^\kappa} 
& = \lim_{s\to \infty} \sup_{\pi\in \Pi(0, su)} \sum_{i=0}^q \frac{\lambda_{y_i} -|\lambda_{y_i}|(1+||t_{y_i}||^p_p)^{1/p} + \epsilon^{1/d} }{s\varepsilon^\kappa}. \\
\end{align*}
Let $\psi:\R^d\mapsto \R^d$ be the linear map defined  for $x=\lambda(e_1+t)$ with $t\in H$  by
$$\psi(\lambda(e_1+t)):=\varepsilon^{-\kappa+\frac{1}{d}}\lambda(e_1+\varepsilon^{\frac \kappa p }t).$$
Recall that $\kappa=p/(pd-d+1)$ (we are here in the case $d_1=1$ and $d_2=d-1$). Using that $H$ is of dimension $d-1$, we get 
$\det(\psi)=\varepsilon^\alpha$ with 
$$\alpha=d \left(-\kappa+\frac 1 d \right)+(d-1)\frac{\kappa}{p}=\kappa\left(-d+\frac{d-1}{p}\right)+1 =0.$$
Hence, $\psi$ preserves the Lebesgue measure on $\mathbb{R}^d$ and so $\psi^{-1}(\Xi)$ has the same law as $\Xi$. So we also have
\begin{align*}
\frac{1-\mu_\epsilon(e_1)}{\varepsilon^\kappa} 
& = \lim_{s\to \infty} \sup_{\pi\in \Pi(0, su)} \sum_{i=0}^q \frac{\varepsilon^{-\kappa+\frac{1}{d}}\lambda_{y_i} -\varepsilon^{-\kappa+\frac{1}{d}}|\lambda_{y_i}|(1+||\varepsilon^{\frac \kappa p }t_{y_i}||^p_p)^{1/p} + \epsilon^{1/d} }{\varepsilon^{-\kappa+\frac{1}{d}}s\varepsilon^\kappa} \\
& = \lim_{s\to \infty} \sup_{\pi\in \Pi(0, su)} \sum_{i=0}^q \frac{|\lambda_{y_i}|(\mbox{sgn}(\lambda_{y_i}) -(1+\varepsilon^{ \kappa  }||t_{y_i}||^p_p)^{1/p}) + \epsilon^{\kappa} }{s\varepsilon^\kappa}. \\
\end{align*}
We then conclude the proof using the following elementary lemma.
\begin{lemma}\label{lemme:monotonie} For any $c\ge 0$, the functions $f:x\mapsto \frac{1-(1+cx)^{1/p}}{x}$ and  $g:=x\mapsto \frac{-1-(1+cx)^{1/p}}{x}$ are non-decreasing on $\R_+$
\end{lemma}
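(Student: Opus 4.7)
\textbf{Proof plan for Lemma \ref{lemme:monotonie}.}

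The case $c=0$ is immediate ($f\equiv 0$ and $g(x)=-2/x$ which is non-decreasing on $(0,\infty)$), so assume $c>0$. I will base the proof on a single observation: the function $h:x\mapsto (1+cx)^{1/p}$ is concave on $\R_+$ whenever $p\ge 1$, since
\[
h''(x)=\frac{c^2}{p}\!\left(\frac{1}{p}-1\right)(1+cx)^{1/p-2}\le 0.
\]
Combined with $h(0)=1$, concavity implies that the chord-slope function $x\mapsto \frac{h(x)-h(0)}{x}=\frac{h(x)-1}{x}$ is non-increasing on $(0,\infty)$ (standard property of concave functions). Equivalently,
\[
f(x)=\frac{1-h(x)}{x}=-\frac{h(x)-h(0)}{x-0}
\]
is non-decreasing on $(0,\infty)$, which is the claim for $f$.

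For $g$, the trick is to observe the identity
\[
g(x)=\frac{-1-(1+cx)^{1/p}}{x}=\frac{1-(1+cx)^{1/p}}{x}-\frac{2}{x}=f(x)-\frac{2}{x}.
\]
Both terms on the right are non-decreasing on $(0,\infty)$: $f$ by the argument just given, and $x\mapsto -2/x$ trivially. Therefore $g$ is non-decreasing as well.

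There is essentially no obstacle here; the only subtlety is noticing the decomposition $g=f-2/x$, which reduces the two statements to a single concavity argument. If one prefers a direct computation, one can alternatively check that $f'(x)=\frac{h(x)-1-xh'(x)}{x^2}$ and apply the concavity inequality $h(0)\le h(x)+h'(x)(0-x)$, i.e.\ $1\le h(x)-xh'(x)$, to conclude $f'\ge 0$; similarly $g'(x)=\frac{h(x)+1-xh'(x)}{x^2}\ge 0$. Both approaches yield the lemma.
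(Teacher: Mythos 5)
Your proof is correct, and it takes a cleaner route than the paper's. The paper first reduces to $c=1$ by a change of variable, computes $f'(x)=\frac{1}{x^2}\bigl((1+x)^{\frac{1}{p}-1}\bigl(1+x-\frac{x}{p}\bigr)-1\bigr)$, and then observes that $f'\ge 0$ is equivalent to the tangent-line inequality $1+(1-\tfrac1p)x\ge(1+x)^{1-1/p}$, which follows from concavity of $t\mapsto(1+t)^{1-1/p}$. You instead invoke concavity of $h(x)=(1+cx)^{1/p}$ itself and apply the standard fact that the chord-slope $x\mapsto\frac{h(x)-h(0)}{x}$ of a concave function is non-increasing; since $f=-\frac{h-h(0)}{x}$, this gives the result directly with no differentiation and no reduction to $c=1$. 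The decomposition $g=f-2/x$ is the same in both proofs. Your approach buys brevity and avoids the algebraic rearrangement that the paper needs; the ``alternative direct computation'' you mention at the end is in fact essentially the paper's argument, applied to $h$ rather than to $(1+x)^{1-1/p}$, so the two proofs are close in spirit but yours is the more economical.

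One small point worth stating explicitly: the lemma is used only in the setting $p\in(1,\infty)$ (Section \ref{s:monotonie}, $p$-norm case), so $1/p\in(0,1)$ and concavity of $h$ holds; for $p=1$ the function $h$ is affine and the statement is trivially true, which your computation of $h''$ also covers.
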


Indeed, assuming this lemma,  the function $(1-\mu_\epsilon(e_1))/\varepsilon^\kappa$ is increasing with respect to  $\varepsilon$ since each term of the previous sum increases with it.

\begin{proof}[Proof of Lemma \ref{lemme:monotonie}] By a change a variable, it is sufficient to prove the case $c=1$. Moreover since $g(x)=f(x)-\frac{2}{x}$, it is sufficient to prove that $f$ is non-decreasing.
We have
$$f'(x)=\frac{1}{x^2}\left((1+x)^{\frac{1}{p}-1}\left(-\frac{x}{p}+1+x \right)-1\right).$$
So 
$$f'(x)\ge 0 \Longleftrightarrow 1+\left(1-\frac{1}{p}\right)x\ge (1+x)^{1-\frac{1}{p}}.$$
The last inequality holds for all $x\ge 0$ by concavity of the function $(1+x)^{1-1/p}$.
\end{proof}
\subsubsection{ $1$-norm}
Consider the $1$-norm.
Let $u=(u_1,\ldots,u_{d_1},0,\ldots,0)$ with 
$u_i>0$ and $||u||_1=1$. Recall the definition of $H$, $H_1$ and $H_2$ given in Section \ref{s:H}. For $x\in \R^d\setminus H$, we write $x=\lambda_x(u+t_{1,x}+t_{2,x})$ with $\lambda_x\in \R$, $t_{1,x}\in H_1$ and $t_{2,x}\in H_2$.
With the same convention as in the previous paragraph, we have
\begin{align*}
\frac{1-\mu_\epsilon(u)}{\varepsilon^\kappa} 
& = \lim_{s\to \infty} \sup_{\pi \in \Pi(0,su)} \sum_{i=0}^q \frac{\lambda_{y_i} -|\lambda_{y_i}|\; ||u+t_{1,y_i}+t_{2,y_i}||_1 + \epsilon^{1/d} }{s\varepsilon^\kappa}. \\
\end{align*}
Recall that now we have  $\kappa:=\kappa_d(u)=1/d_1$.
Let $\psi:\R^d\mapsto \R^d$ be the linear map defined  for $x=\lambda(u+t_1+t_2)$ with $t_1\in H_1$ and $t_2\in H_2$ by
$$\psi(\lambda(u+t_1+t_2)):=\varepsilon^{-\kappa+\frac{1}{d}}\lambda(u+t_1+\varepsilon^{ \kappa  }t_2).$$
 Using that  $H_2$ is of dimension $d_2$, we get 
$\det(\psi)=\varepsilon^\alpha$ with 
$$\alpha=d \left(-\kappa+\frac 1 d \right)+d_2\kappa=1+ (d_2-d)\kappa=0.$$
As before, $\psi$ preserves the Lebesgue measure and so we get, using that $||u+t_1+t_2||_1=||u+t_1||_1+||t_2||_1$,
\begin{align*}
\frac{1-\mu_\epsilon(u)}{\varepsilon^\kappa} 
& = \lim_{s\to \infty} \sup_{\pi \in \Pi(0,su)} \sum_{i=0}^q \frac{|\lambda_{y_i}|(\mbox{sgn}(\lambda_{y_i}) -(||u+t_{1,y_i}||_1+\varepsilon^{ \kappa  }||t_{2,y_i}||_1)) + \epsilon^{\kappa} }{s\varepsilon^\kappa} \\
& = \lim_{s\to \infty} \sup_{\pi \in \Pi(0,su)} \sum_{i=0}^q \left(\frac{|\lambda_{y_i}|(\mbox{sgn}(\lambda_{y_i}) -||u+t_{1,y_i}||_1)}{s\varepsilon^\kappa}+\frac{|\lambda_{y_i}| \, ||t_{2,y_i}||_1 + 1 }{s}\right). \\
\end{align*}
Since, by definition of $H$, $||u+t_{1,y_i}||_1\ge 1$, the numerator $|\lambda_{y_i}|(\mbox{sgn}(\lambda_{y_i}) -||u+t_{1,y_i}||_1)$ is non positive and so each term of the previous sum  is again increasing with respect to $\varepsilon$.

\subsubsection{ $\infty$-norm}

Consider the $\infty$-norm. By symmetry, it is sufficient to prove the result for
$u=(1,u_{2},\ldots,u_{d})$ or $u=(1,1,u_{3},\ldots,u_{d})$ with
$u_i\in [0,1)$. Recall the definition of $H,H_3$ and $H_4$ given in Section \ref{s:H}. For $x\in \R^d\setminus H$, we write $x=\lambda_x(u+t_{3,x}+t_{4,x})$ with $\lambda_x\in \R$, $t_{3,x}\in H_3$ and $t_{4,x}\in H_4$.
With the same convention as in the previous paragraph, we have
\begin{align*}
\frac{1-\mu_\epsilon(u)}{\varepsilon^\kappa} 
& = \lim_{s\to \infty} \sup_{\pi \in \Pi(0,su)} \sum_{i=0}^q \frac{\lambda_{y_i} -|\lambda_{y_i}|\; ||u+t_{3,y_i}+t_{4,y_i}||_\infty + \epsilon^{1/d} }{s\varepsilon^\kappa}. \\
\end{align*}
Recall that now  $\kappa:=\kappa_d(u)=1/(d_4+1)$.
Let $\psi:\R^d\mapsto \R^d$ be the linear map defined  for $x=\lambda(u+t_3+t_4)$ with $t_3\in H_3$ and $t_4\in H_4$ by
$$\psi(\lambda(u+t_3+t_4)):=\varepsilon^{-\kappa+\frac{1}{d}}\lambda(u+\varepsilon^{ \kappa  }t_3+t_4).$$
As before, $\psi$ preserves the Lebesgue measure  since $d(-\kappa+\frac 1 d)+(d_3-1)\kappa=0$
and so we get
\begin{align*}
\frac{1-\mu_\epsilon(u)}{\varepsilon^\kappa} 
& = \lim_{s\to \infty} \sup_{\pi : 0 \to su} \sum_{i=0}^q \frac{|\lambda_{y_i}|(\mbox{sgn}(\lambda_{y_i}) -||u+\varepsilon^{ \kappa  }t_{3,y_i}+t_{4,y_i}||_\infty) + \epsilon^{\kappa} }{s\varepsilon^\kappa}. \\
\end{align*}

Let us  check that if $d_3\in \{1,2\}$, the function 
$f(x)=\frac{1-||u+xt_{3}+t_{4}||_\infty}{x}$
is non-decreasing on $\R^*_+$ for any $(t_3,t_4)\in H_3\times H_4$.

If $d_3=1$, then $H_3=\{0\}$. So using that  $1-||u+t_{4}||_\infty\le 0$, the function $f$ is indeed non-decreasing.

If $d_3=2$, then $H_3=\mbox{Vect}(e_2-e_1)$, so there exists some $c\in \R$ such that $t_3=c(e_2-e_1)$ and by symmetry, we can assume $c\ge 0$. Writing $u=e_1+e_2+u_4$ with $u_4\in H_4$,  we have, for $x\ge 0$,
  $$||u+xt_{3}+t_{4}||_\infty=\max(1+cx,||u_4+t_4||_\infty) \quad \mbox{i.e.} \quad f(x)=-\max\left(c,\frac{||u_4+t_{4}||_\infty-1}{x}\right)$$
and one can check that the function $x\mapsto\max(c,c'/x)$ is non-increasing on $\R^*_+$ for any $(c,c')\in \R_+\times \R$.


\section*{Appendix}

\subsection*{A : Proof of Theorem \ref{t:existence}}

\begin{proof}[Proof of Theorem \ref{t:existence} $(i)$: Existence of a geodesic.] To prove that a finite geodesic exists between two points, we need to show that paths with a large $N$-length cannot have a small travel time. Let us denote by $\Pi(0, \star)$ the set of polygonal paths starting from $0$. For any $s>0$ and $k>1$,  using a union bound in the first inequality, we have
\begin{eqnarray*}
\P(\exists\pi \in \Pi(0, \star), N(\pi)\ge ks, T_\varepsilon(\pi)\le s)
&\le & \sum_{i\ge 0} \P(\exists \pi\in \Pi(0, \star), \# \pi=i,  N(\pi)\ge ks, T_\varepsilon(\pi)\le s)  \\
&= & \sum_{i\ge 0} \P(\exists \pi\in \Pi(0, \star), \# \pi=i,  ks \le N(\pi)\le s+i\varepsilon^{1/d}) \\
&= & \sum_{i\ge (k-1)s\varepsilon^{-1/d}} \P(\exists \pi\in \Pi(0, \star), \# \pi=i,  N(\pi)\le s+i\varepsilon^{1/d}).
\end{eqnarray*}
Setting $\xi_0=0$, we have, for $\beta>0$, using that $1_{x\ge 0}\le e^{\beta x}$ and by Mecke Equation (see Theorem 4.4 in \cite{last_penrose_2017}),
\begin{eqnarray*}
 \P(\exists \pi\in \Pi(0, \star), \# \pi=i,  N(\pi)\le s+i\varepsilon^{1/d})
 &=&  \P(\exists (\xi_j)_{j\le i}\in \Xi,\sum_{j=1}^i N(\xi_{j}-\xi_{j-1})\le s+i\varepsilon^{1/d})\\
 &\le &  \int_{(\R^d)^i} \exp(\beta(s+i\varepsilon^{1/d} -  \sum_{j=1}^i N(\xi_{j}-\xi_{j-1}))d\xi_1 \ldots d\xi_i\\
  &\le & \exp(\beta s) \left(\int_{\R^d} \exp(\beta(\varepsilon^{1/d} -  N(z))dz\right)^i\\
 &\le & \exp(\beta s) \left(\frac{\exp(\beta \varepsilon^{1/d})}{\beta^d}\int_{\R^d} \exp( -  N(z))dz\right)^i.\\
\end{eqnarray*}
Let's take  $\beta$ such that $\frac{1}{\beta^d}\int_{\R^d} \exp( -  N(z))dz<1/4
$ and 
$\varepsilon_0$ small enough such that $\exp(\beta \varepsilon_0^{1/d})<2$.
We get then, for $\varepsilon<\varepsilon_0$,
\begin{equation}\label{e:majorationlongueur}
 \P(\exists \pi\in \Pi(0, \star), \# \pi=i,  N(\pi)\le s+i\varepsilon^{1/d})\le \frac{\exp(\beta s)}{2^i}.
 \end{equation}
In particular, for all $s> 0$, for $\varepsilon<\varepsilon_0$,
\begin{equation}\label{e:geodesiquecourte}
\lim_{k\to \infty} \P(\exists \pi\in \Pi(0, \star), N(\pi)\ge ks, T_\varepsilon(\pi)\le s)=0.
\end{equation}
Fix $n\in \N$ and let us now prove that there exists a.s. a geodesic from $x$ to $y$ for any $x,y\in B_N(n)$. Using \eqref{e:geodesiquecourte}, we get that there exists a.s. a (random) $K$ such that for any polygonal path $\pi$ starting from $0$ with $N(\pi)\ge Kn$, we have $T_\varepsilon(\pi)\ge 4n$.
Let now $\bar \pi=(x,x_1,\ldots,x_k,y)$ be a polygonal path from $x$ to $y$ such that $N(\bar \pi)\ge (K+1)n$. Then $ \pi=(0,x_1,\ldots,x_k,y)$ is a polygonal path starting from $0$ and by triangle inequality $N(\pi)\ge N(\bar \pi)-N(x)\ge Kn$ and so  that $T_\varepsilon(\pi)\ge 4n$. Using that
 $T_\varepsilon(\pi)\le N(x)+T_\varepsilon(\bar \pi)$, we deduce that $T_\varepsilon(\bar \pi)\ge 3n$. Since $T_\varepsilon(x,y)\le N(y-x)\le 2n$, we get in particular that, for all $x,y\in B_N(n)$, 
\begin{eqnarray*}
\inf \{T_\varepsilon (\pi) \,:\, \pi \in \Pi(x,y)\} & = & \inf \{T_\varepsilon (\pi) \,:\, \pi \in \Pi(x,y), \pi \subset B_N(K')\}\\ & = & \min \{T_\varepsilon (\pi) \,:\, \pi \in \Pi(x,y), \pi \subset B_N(K')\}
\end{eqnarray*}
with $K':=(K+2)n$.
The last infimum is taken on a finite set of paths since there is a finite number of points of $\Xi$ in $B_N(K')$. This implies the almost sure existence of a finite geodesic between $x$ and $y$.
This holds for any $n\in \N$, so we deduce the a.s. existence of a geodesic for any $x,y\in \R^d$.

\end{proof}

\begin{proof}[Proof of Theorem \ref{t:existence} $(ii)$: Existence of the time constant.]
Let us define, for $x\in \R^d$
$$J_\varepsilon(x) = \inf \{ T_\varepsilon(\pi) \, : \, \pi \in\Pi (x,\star)\} \quad \mbox{ and } \quad X_\varepsilon(x,y)=T_\varepsilon(x,y)-J_\varepsilon(x)-J_\varepsilon(y) \,,$$
where $\Pi (x,\star)$ is the set of polygonal paths starting from $x$. Taking $\pi=(x)$ (the path reduced to the single point $x$), we note that $J_\varepsilon(x)\le 0$. Moreover, note that Equation \eqref{e:geodesiquecourte} implies that $J_\varepsilon(x)$ and $T_\varepsilon(x,y)$ are a.s. finite for $\varepsilon$ small enough so $X_\varepsilon(x,y)$ is well defined.  Moreover, since  
$J_\varepsilon(x)\le T_\varepsilon(x,y)$, we get that  $X_\varepsilon(x,y)$ is non-negative.
Let us prove that $X_\varepsilon(x,y)$ satisfies the triangle inequality, \emph{i.e.}, for $x,y,z\in \R^d$, $X_\varepsilon(x,z)\le X_\varepsilon(x,y)+X_\varepsilon(y,z)$.

Let $\gamma_\varepsilon(y,x)=(y,a_1,\ldots,a_{n-1},x)$ be a geodesic from $y$ to $x$ and $\gamma_\varepsilon(y,z)=(y,b_1,\ldots,b_{m-1},z)$ be a geodesic from $y$ to $z$. Set  $a_0=b_0=y$ and $a_n=x$, $b_m=z$. Let $i_0,j_0$ be two indices such that $a_{i_0}=b_{j_0}$ and such that $\{a_{i_0+1},\ldots,a_{n}\}$ and  $\{b_{j_0+1},\ldots,b_{m}\}$ are disjoint. Such indices exist since $a_0=b_0=y$. Let
  $\gamma_1:= (a_{i_0},\ldots,a_{n-1},x)$ and  $\gamma_2:= (b_{j_0},\ldots,b_{m-1},z)$  ($\gamma_1$ and $\gamma_2$ can be reduced to a point if $i_0=n$ or $j_0=m$). Let $\gamma'_1:= (y,a_1\ldots, a_{i_0})$ and  $\gamma'_2:= (y,b_1\ldots, b_{j_0})$. We have
$$T_\varepsilon(x,y)= T_\varepsilon(\gamma_1)+T_\varepsilon(\gamma'_1)+\varepsilon^{1/d}1_{\{a_{i_0}\in \Xi\}}$$
$$T_\varepsilon(y,z)=   T_\varepsilon(\gamma_2)+T_\varepsilon(\gamma'_2)+\varepsilon^{1/d}1_{\{b_{j_0}\in \Xi\}}$$
(the potential reward located at $a_{i_0}=b_{j_0}$ is taking into account both in $T_\varepsilon(\gamma_1)$ and in  $T_\varepsilon(\gamma'_1)$). Moreover, noticing that  $(x,a_{n-1},\ldots,a_{i_0}=b_{j_0},\ldots,b_{m-1},z)$ is a polygonal path from $x$ to $z$ with distinct vertices, we get
$$T_\varepsilon(x,z)\le T_\varepsilon(\gamma_1)+T_\varepsilon(\gamma_2)+\varepsilon^{1/d}1_{\{a_{i_0}\in \Xi\}}.$$
Besides, $T_\varepsilon(\gamma'_1)\ge J_\varepsilon(y)$ and $T_\varepsilon(\gamma'_2)\ge J_\varepsilon(y)$.
So 
$$T_\varepsilon(x,y)+T_\varepsilon(y,z)\ge 2J_\varepsilon(y)+T_\varepsilon(x,z).$$
This yields
$$X_\varepsilon(x,y)+X_\varepsilon(y,z)\ge  2J_\varepsilon(y)+T_\varepsilon(x,z)-J_\varepsilon(x)-2J_\varepsilon(y)-J_\varepsilon(z)=X_\varepsilon(x,z)$$
and so the random variables $(X_\varepsilon(x,y),x,y\in \R^d)$ satisfy the triangle 
inequality. In particular, for any $u\in \R^d$, if we set for $m\ge n\ge 0$, $X_{n,m}:=X_\varepsilon(n u,m u)$, the process $(X_{n,m}, 0\le n \le m)$ is subadditive:
$$X_{l,m}\le X_{l,n}+X_{n,m} \mbox{ for any } 0\le l\le n\le m.$$
To apply Kingman's subadditive ergodic theorem, we must also check that $\E(X_{0,n})$ is finite.
We have 
$$0\le X_{0,n}= T_\varepsilon(0,nu)-J_\varepsilon(nu)-J_\varepsilon(0).$$
Using that $T_\varepsilon(0,nu)\le nN(u)$, we get
$$\E(X_{0,n})\le nN(u) +2\E(|J_\varepsilon(0)|).$$
Recall that $J_\varepsilon(0)\le 0$ and we have for $s\ge 0$,
\begin{equation*}
P(J_\varepsilon(0)\le -s)\le  \sum_{i\ge 0} \P(\exists \pi\in \Pi( 0, \star), \# \pi=i, N(\pi)\le -s+\varepsilon^{1/d} i).
\end{equation*}
Note that the bound obtain in \eqref{e:majorationlongueur} holds in fact also if $s<0$, so we get, for $\varepsilon<\varepsilon_0$,
\begin{equation*}
P(|J_\varepsilon(0)|\ge s)
\le  \sum_{i\ge 0}\frac{\exp(-\beta s)}{2^i}=2\exp(-\beta s)
\end{equation*}
which proves the integrability of
$J_\varepsilon(0)$ for $\varepsilon$ small enough. Using the stationarity and the ergodicity of the process,
Kingman's subadditive ergodic theorem \cite{Kingman} implies the existence of a limit
$$\mu_\varepsilon(u): = \lim_{n\to \infty} \frac{X_{0,n}} {n} = \inf_{n\ge 0} \frac{\E (X_{0,n}) }{n}\mbox{ a.s. and in $L^1$.}$$
Note that we have $\mu_\varepsilon(u)\ge 0$ since $X_{0,n}\ge 0$. Moreover, since, for any $n\ge 0$, the random variables $J_\varepsilon(nu)$ have the same law and have finite expectation, we get that
$$\lim_{n\to \infty}\frac{J_\varepsilon(nu)+J_\varepsilon(0)}{n}=0 \mbox{ a.s. and in $L^1$.}$$
So we also get 
$$\mu_\varepsilon(u)= \lim_{n\to \infty} \frac{T_\varepsilon(0,nu)} {n} \mbox{ a.s. and in $L^1$.}$$
It just remains to prove that this limit holds in fact for $s$ going to infinity, $s\in \R$. We write, for $s>0$,
\begin{eqnarray*}
 \frac{T(0,\lfloor s \rfloor u)-N(su-\lfloor s \rfloor u)}{s} &\le &\frac{T(0,su)}{s} \le \frac{T(0,\lfloor s \rfloor u)+N(su-\lfloor s \rfloor u)}{s}\\
\frac{T(0,\lfloor s \rfloor u)}{\lfloor s \rfloor} \frac{\lfloor s \rfloor}{s}-\frac{(s-\lfloor s \rfloor )N(u)}{s} &\le &\frac{T(0,su)}{s} \le \frac{T(0,\lfloor s \rfloor u)}{\lfloor s \rfloor} \frac{\lfloor s \rfloor}{s}+\frac{(s-\lfloor s \rfloor )N(u)}{s}\\
\end{eqnarray*}
which yields
$$\mu_\varepsilon(u)= \lim_{s\to \infty} \frac{T_\varepsilon(0,su)} {s} \mbox{ a.s. and in $L^1$.}$$
It remains to prove that for $\varepsilon$ small enough $\mu_\varepsilon(\cdot)$ is a norm. Triangle inequality and homogeneity are  straightforward. For the separation, using \eqref{e:majorationlongueur}, we have, for $\varepsilon$ small enough and $u\in \R^d$ such that $N(u)=1$,
\begin{equation*}
 \P \left( T_\varepsilon(0,su)\le \frac{s}{2} \right) \le \sum_{i\ge s\varepsilon^{-1/d}/2}\P \left( \exists \pi\in \Pi(0, \star), \# \pi=i,  N(\pi)\le \frac{s}{2}+i\varepsilon^{1/d}  \right) \le \frac{2\exp(\beta s/2)}{2^{s\varepsilon^{-1/d}/2}}
 \end{equation*}
which tends to 0 as $s$ tends to infinity if $\varepsilon$ is small enough (uniformly in $u$). In particular, this implies that, for small enough $\varepsilon$, for all $u\in B_N(1)$, $\mu_\varepsilon(u)\ge 1/2$. Hence, we get that, for small enough $\varepsilon$, $\mu_\varepsilon(\cdot)$ is a norm and,  in fact, for all $u\in\R^d$, 
$$\frac{N(u)}{2}\le \mu_\varepsilon(u)\le N(u).$$

\end{proof}

\subsection*{B : Proofs of Section \ref{s:geo}}

\begin{proof}[Proof of Proposition \ref{prop:function_h}]

Let $(u,H)$ satisfying \eqref{e:(u,H)}, {\em i.e.}, let $u \in \mathbb{R}^d$ be such that $N(u)=1$ and let $u+H$ be a supporting hyperplane of $B_N (1)$ at $u$. Let $\eta \geq 0$. We recall the following definitions (see Figure \ref{Fig:boule} for an illustration):
$$K_\eta(u):=\{v\in H \,:\, N(u+v)\le 1+\eta\} \qquad \mbox{ and } \qquad M_\eta(u):=K_\eta(u)\cap (-K_\eta(u))$$
and
$$h_u(\eta):=\eta^{-d}|K_\eta(u)| \qquad \mbox{ and } \qquad \bar{h}_u(\eta):=\eta^{-d}|M_\eta(u)|.$$

Let us prove that $h_u$ is a decreasing homeomorphism from $(0,\infty)$ to $(0,\infty)$. The reader can check that the proof can be easily adapted to show that $\bar h_u$ is also a decreasing homeomorphism from $(0,\infty)$ to $(0,\infty)$.

First notice that $v\in H \mapsto N (u+v)$ is convex. Indeed, for all $v,v' \in H$, for all $\lambda \in [0,1]$, we have
\begin{align*}
N(u + \lambda v + (1-\lambda) v') & = N (\lambda (u+v) + (1-\lambda) (u+v'))\\ &  \leq N (\lambda (u+v)) + N ((1-\lambda) (u+v')) =  \lambda N (u+v) + (1-\lambda) N ( u+v') \,.
\end{align*}
By convexity, for all  $\lambda \in [0,1]$ and $\eta,\eta'>0$ we have
\[
\lambda K_\eta(u) + (1-\lambda) K_{\eta'}(u) \subset K_{\lambda\eta+(1-\lambda)\eta'}(u)
\]
so
\[
|K_{\lambda\eta+(1-\lambda)\eta'}(u)|^{\frac 1 {d-1}} \ge |\lambda K_\eta(u) + (1-\lambda) K_{\eta'}(u)|^{\frac 1 {d-1}}.
\]
Using Brunn-Minkowski's inequality, we have
\[
|\lambda K_\eta(u) + (1-\lambda) K_{\eta'}(u)|^{\frac 1 {d-1}}\ge \lambda |K_\eta(u)|^{\frac 1 {d-1}} + (1-\lambda) |K_{\eta'}(u)|^{\frac 1 {d-1}}\,,
\]
so
\begin{equation}
\label{e:ell_u}
\eta \mapsto |K_\eta(u)|^{\frac 1 {d-1}} 
\end{equation}
is concave and so, in particular, is continuous. This already proves that $h_u$ is continuous.
Let $v \in H$ and $\eta>0$. 
For all $\lambda \in [0, 1]$, by convexity, we have
$$ N (u+\lambda v) \leq \lambda N(u+v) + (1-\lambda) N(u) = \lambda N(u+v) + (1-\lambda)$$
thus
\[
N(u+v)-1 \leq \eta \implies N(u+\lambda v) -1 \leq \lambda \eta \,,
\]
and so
\[
\lambda K_\eta(u) \subset K_{\lambda \eta}(u)\,.
\]
This gives that for all $0<\eta_1 \le \eta_2$,
\[
 \left(\frac{\eta_1}{\eta_2}\right)^{d-1}|K_{\eta_2}(u)| \le |K_{\eta_1}(u)|
\]
so the function $r_u(\eta):=  \frac{|K_\eta(u)|}{\eta^{d-1}}$
is non-increasing. This implies that $h_u(\eta)=r_u(\eta)/\eta$ is decreasing. 
Moreover, by triangle inequality, we have, for any $u\in \R^d$ such that $N(u)=1$, 
$$\{v\in H \,:\, N(v)\le \eta\}\subset K_\eta(u)\subset \{v\in H \,:\, N(v)\le 2+\eta\}. $$ 
Using that $N$ is equivalent to the euclidean norm, we get, for some constants $c,c'$ depending only on $d$ and $N$,
$$\{v\in H,\|v\|_2\le c\eta\}\subset K_\eta(u)\subset \{v\in H, \|v\|_2\le c'(2+\eta)\}. $$ 
Since $H$ is $(d-1)$-dimensional, this gives that, for some $A:=A(d,N)>0$ and $B:=B(d,N)>0$,
$$A\eta^{d-1} \le |K_\eta(u)|\le B(2+\eta)^{d-1}\,, $$ 
and so 
$$A\eta^{-1} \le h_u(\eta)\le B\eta^{-d}(2+\eta)^{d-1}. $$
This implies in particular that  $h_u$ tends to $+\infty$ at 0 and to $0$ at infinity.

\end{proof}

\begin{proof}[Proof of Lemma \ref{l:I+}]
Let $\| \cdot \|_2$ be the Euclidean norm.  We will show the existence of a function $\overline{\psi} := \overline{\psi}_\eta : H \rightarrow H$ that is an {\em affine} transformation with positive determinant such that
\begin{equation}
\label{e:27}
N (u+ \overline{\psi} (t))  \leq 1+ \eta \textrm{ for all }t\in H\textrm{ such that } \|t\|_2 \leq 1
\end{equation}
and 
\begin{equation}
\label{e:28}
N (u+ \overline{\psi} (t)) \geq1+ \eta \textrm{ for all }t\in H\textrm{ such that } \|t\|_2 \geq d-1.
\end{equation}
Let us prove that such a function $\overline{\psi}$ exists. The set
$$ K_\eta(u) = \{ v \in H \,:\, N(u+v) \leq 1+ \eta \}  $$
is compact, convex, with non-empty interior. Thus, by John-Loewner Theorem (see for instance Theorem III in \cite{Fritz}), there exists a centered ellipsoïde $J$ of $H$ and a $c\in H$ such that
$$  c+J \subset K_\eta(u) \subset c + (d-1) J. $$

Let $B_{H} (r):=B_{\|\cdot\|_2,H}(r) $ be the ball of $H$ of radius $r$ for the Euclidean norm $\| \cdot \|_2$. Let $\psi : H\mapsto H$ be the linear function with positive determinant such that $\psi (B_{H}(1)) = J$ and $\overline{\psi} := c + \psi$. Then, we have
$$ \overline{\psi} (B_{H}(1)) = c+J \subset K_\eta \subset c + (d-1)J = \overline{\psi} (B_{H} (d-1)) $$
which shows that \eqref{e:27} and \eqref{e:28} hold. Moreover, we can find bounds on the determinant of $\psi$. Indeed, we have 
$$ \det (\psi) = \frac{| \psi (B_{H}(1))|}{|B_{H} (1)|}=\frac{| \psi (B_{H}(1))|}{\cV_{d-1}}  $$
where $\cV_{d-1}$ is the volume of the unit euclidean ball of $\R^{d-1}$. Using that
$$ |\psi (B_{H} (1))| \leq |K_\eta(u) | \leq | \psi (B_{H} (d-1)) | $$
we get
\begin{equation}
\label{e:ajout1}
 \cV_{d-1}\det (\psi) \leq |K_\eta(u) | \leq  (d-1)^{d-1}\cV_{d-1}\det (\psi)\,.
 \end{equation}
Recall now that
$$ I^+ (\eta) = \int_{\mathbb{R}^d \cap \{ x:x\cdot u^\star >0 \}} \exp ( - (N(x) - (1-\eta) x \cdot u^\star)) dx \,.$$
By the change of variable\footnote{
This change of variable can be written as follows. Let $f_1 = u^\star / \|u^\star\|_2$ and let $(f_2,\dots, f_d)$ be an orthonormal basis of $H$, thus $(f_1,\dots , f_d)$ is an orthonormal basis of $\mathbb{R}^d$. Let $x = \sum_{i=1}^d x_i f_i$ and $u=\sum_{i=1}^d u_i f_i$ be the decomposition of $x$ and $u$ in this basis. Consider now the basis $(u, f_2, \dots , f_d)$ and $x= \lambda u + \sum_{i=1}^d v_i f_i$ the decomposition of $x$ in this basis. Then $x_1 = \lambda u_1$ and for every $i\in \{2,\dots , d\}$ we have $x_i = x \cdot f_i = \lambda u_i + v_i$. Thus the change of basis is given by $\Psi : (\lambda, v_2 , \dots , v_d ) \mapsto (x_1, x_2, \dots , x_d) = (\lambda u_1, v_2 + \lambda u_2, \dots, v_d + \lambda u_d) $. The Jacobian of $\Psi$ is $J_\Psi (\lambda, v_2, \cdots, v_d) = \textrm{det} \Psi' (\lambda, v_2, \dots, v_d) = u_1 = u \cdot u^\star / \| u^\star\|_2  = 1/  \| u^\star\|_2$.
}
$x=\lambda u +v$ with $\lambda=x\cdot u^\star$ and $v\in H$, we have 
\begin{align*}
I^+ (\eta) & = \frac{1}{\|u^\star\|_2} \int_0^\infty \int_{H} \exp(-(N(\lambda u + v)-(1-\eta) \lambda ))dv \,d\lambda\\
&=\frac{1}{\|u^\star\|_2} \int_0^\infty \int_{H}  \lambda^{d-1} \exp \left(-\eta \lambda - \eta \lambda \frac{N(u+w)-1}{\eta} \right)dw \,d\lambda \,.
\end{align*}
Let $x=\lambda \eta$ and $w=\overline{\psi}(t)$. Then
$$ I^+ (\eta)  = \frac{1}{\|u^\star\|_2} \eta^{-d} \det (\psi) G (\eta) $$
with
$$ G(\eta) = \int_0^\infty \int_{H} x^{d-1} \exp \left(-x -x \frac{N(u+\overline{\psi}(t))-1}{\eta} \right) dt \, dx \,. $$
Let us prove that there exists two constants $c,c'>0$ (depending only on $d$ and $N$) such that $c \leq G(\eta) \leq c'$ for $\eta >0$. Let us define
$$ g_\eta (t) := \frac{N (u + \overline{\psi}(t)) - 1}{\eta} \,.$$
First recall that if $t\in B_{H} (1)$, then $g_\eta (t) \leq 1$. Thus 
$$ G(\eta) \geq \int_0^\infty \int_{B_{H} (1)} x^{d-1} \exp (-2x) dt \,dx =\cV_{d-1}\int_0^\infty x^{d-1} \exp (-2x)  dx=\frac{\cV_{d-1}d!}{2^d} := c\, . $$
On the other side, if $\|t\|_2\geq d$ then $g_\eta (t) \geq 1$. Let $t_0=\overline{\psi}^{-1} (0)$. Note that $g_\eta(t_0) = 0$ and so $\|t_0\|_2 \leq d$. Using the convexity of $g_\eta$, we get that for $\|t\|_2\geq d$,
$$ g_\eta (t) \geq \frac{\|t-t_0\|_2}{d+\|t_0\|_2} \geq \frac{\|t\|_2}{2d} - \frac{1}{2} \,.$$
Since $g_\eta$ is non-negative on $H$, this lower bound holds in fact on $H$ (notice that $(\| t\|_2 / (2d)) - (1/2) <0$ for $\|t\|_2 < d$). This yields
\begin{align*}
G(\eta) & \leq  \int_0^\infty \int_{H} x^{d-1} \exp\left(-x -x \left(\frac{\|t\|_2}{2d} - \frac{1}{2} \right) \right)dt \,dx  \\
& \leq  \int_0^\infty \int_{\mathbb{R}^{d-1}} x^{d-1} \exp \left(-\frac{x}{2} - \frac{\|x t\|_2}{2d}  \right)dt \,dx  \\
& \leq  \int_0^\infty     \exp(-\frac{x}{2} ) dx  \int_{\mathbb{R}^{d-1}}\exp\left(- \frac{\|v\|_2}{2d}  \right)dv :=c' \,<\, \infty\,.
\end{align*}
Hence we get, using \eqref{e:ajout1}, that for $\eta >0$
\begin{equation*}
 \frac{c}{\|u^\star\|_2} ((d-1)^{d-1}\cV_{d-1})^{-1} |K_\eta(u)| \le \eta^{d}I^+(\eta)\le \frac{c'}{\|u^\star\|_2} (\cV_{d-1})^{-1}|K_\eta(u)| \,.
\end{equation*}
We conclude the study of $I^+ (\eta)$ using Lemma \ref{lem:CDV} which bounds $\|u^\star\|_2$ uniformly in $u\in B_N(1)$.
We now study $I (\eta)$ defined by
$$ I (\eta) = \int_{\mathbb{R}^d } \exp ( - (N(x) - (1-\eta) x \cdot u^\star)) dx. $$
We have $I(\eta)= I^+ (\eta)+I^- (\eta)$ where
$$ I^- (\eta) = \int_{\mathbb{R}^d \cap \{ x:x\cdot u^\star <0 \}} \exp ( - (N(x) - (1-\eta) x \cdot u^\star)) dx. $$
Note that for $\eta \in [0,1]$, $0\le I^- (\eta) \le \int_{\mathbb{R}^d } \exp ( - N(x) ) dx<\infty$ so $I^-$ is bounded around $0$. On the contrary, since $I^+(\eta)$ is of the same order as $h_u(\eta):=K_\eta(u)\eta^{-d}$, Proposition \ref{prop:function_h} implies that $I^+$ tends to infinity at $0$. So we get that $I(\eta)\sim I^+(\eta)$ for $\eta$ going to $0$ and so for $\eta$ small enough, we have 
$$
c_1 h_u(\eta) \le I(\eta)\le 2c_2 h_u(\eta)
$$
as desired.
\end{proof}

\bibliographystyle{plain}

\end{document}